\newtheorem{theorem}{Theorem}           
\newtheorem{corollary}[theorem]{Corollary}
\newtheorem{lemma}[theorem]{Lemma}
\newtheorem{prop}[theorem]{Proposition}
\theoremstyle{definition}              
\theoremstyle{remark}                  
\newtheorem{step}{Step}
\newtheorem{remark}{Remark}
\DeclareMathOperator{\Id}{Id}                                       
\DeclareMathOperator{\dist}{dist}                                   
\DeclareMathOperator{\sign}{sign}                                   
\DeclareMathOperator{\spt}{spt}                                     
\DeclareMathOperator{\curl}{\curl}                                  
\DeclareMathOperator{\BV}{BV}
\DeclareMathOperator{\SBV}{SBV}
\newcommand{\abs}[1]{\left| #1 \right|}                             
\newcommand{\norm}[1]{\left\| #1 \right\|}                          
\newcommand{\mres}
{\mathbin{\vrule height 1.6ex depth 0pt width 0.13ex\vrule height 0.13ex depth 0pt width 1.3ex}}
\newcommand{\csubset}{\subset\!\subset}                             
\DeclareMathAlphabet{\mathpzc}{OT1}{pzc}{m}{it}
\newcommand{\D}{\mathrm{D}}       
\newcommand{\T}{\mathrm{T}}
\renewcommand{\d}{\mathrm{d}}
\newcommand{\J}{\mathrm{J}}
\newcommand{\N}{\mathbb{N}}       
\newcommand{\R}{\mathbb{R}}
\newcommand{\Z}{\mathbb{Z}}
\newcommand{\M}{\mathbb{M}}
\newcommand{\F}{\mathbb{F}}
\newcommand{\I}{\mathbb{I}}
\renewcommand{\SS}{\mathbb{S}}
\newcommand{\G}{\mathbf{G}}       
\renewcommand{\P}{\mathbb{P}}
\newcommand{\NN}{\mathscr{N}}     
\newcommand{\EE}{\mathscr{E}}
\renewcommand{\H}{\mathscr{H}}
\renewcommand{\L}{\mathscr{L}}
\newcommand{\nnu}{{\boldsymbol{\nu}}}
\newcommand{\PR}{\mathbb{R}\mathrm{P}^2}  
\newcommand{\RR}{\varrho}
\newcommand{\X}{\mathscr{X}}
\renewcommand{\S}{\mathbf{S}}
\definecolor{lightblue}{rgb}{0.22,0.45,0.70}   
\definecolor{darkgray}{gray}{0.4}    
\definecolor{lightgray}{gray}{0.8}
\newcommand{\BBB}{}
\title
{Topological singular set of vector-valued maps, I: \\
Applications to manifold-constrained Sobolev and BV spaces}
\author{Giacomo Canevari\thanks{BCAM --- Basque Center for Applied Mathematics,
Alameda de Mazarredo 14, 48009 Bilbao, Spain. \\
\emph{E-mail address}: \texttt{gcanevari@bcamath.org}} \mbox{ }and 
Giandomenico Orlandi\thanks{Dipartimento di Informatica --- Universit\`a di Verona,
Strada le Grazie 15, 37134 Verona, Italy. \\
\emph{E-mail address}: \texttt{giandomenico.orlandi@univr.it}}}
\date{\today}
\begin{document}

\maketitle

\begin{abstract}
 We introduce an operator~$\S$ on vector-valued maps~$u$ which has the ability to capture
 the relevant topological information carried by~$u$.
 In particular, this operator is defined on maps that take values in a closed submanifold~$\NN$ 
 of the Euclidean space~$\R^m$, and coincides with the distributional Jacobian 
 in case~$\NN$ is a sphere. {\BBB More precisely, the range of~$\S$ is a set of maps
 whose values are} flat chains with coefficients in
 a suitable normed abelian group. In this paper, we use~$\S$ to characterise
 strong limits of smooth, $\NN$-valued maps with respect to Sobolev norms, 
 extending a result by Pakzad and Rivière. We also discuss applications to the study
 of manifold-valued maps of bounded variation. In a companion paper, we will consider 
 applications to the asymptotic behaviour of minimisers
 of Ginzburg-Landau type functionals,  with $\NN$-well potentials.

 \smallskip
 \noindent{\bf Keywords.} Topological singularities $\cdot$ Flat chains 
 $\cdot$ Manifold-valued maps $\cdot$ Density of smooth maps $\cdot$ Lifting.
 
 \noindent{\bf 2010 Mathematics Subject Classification.} 
              58C06  
      $\cdot$ 49Q15  
      $\cdot$ 49Q20. 
\end{abstract}

\section{Introduction}


Let~$\NN$ be a smooth, closed Riemannian manifold, isometrically embedded in 
a Euclidean space~$\R^m$, and let $\Omega\subseteq\R^d$ be a bounded,
smooth domain of dimension $d\geq 2$. Functional spaces of maps $u\colon\Omega\to\NN$
(e.g., Sobolev or BV) have been extensively studied in the literature, in connection
with manifold-constrained variational problems, in order to detect 
the topological information encoded by~$u$.

In this paper, instead of dealing directly with $\NN$-valued maps,
we consider \emph{vector-valued} maps $u\colon\Omega\to\R^m$, which we think of as 
approximations of a map $v\colon\Omega\to\NN$. 
This point of view also arises quite naturally from
variational problems, such as the penalised harmonic map problem, the Ginzburg-Landau model for 
superconductivity or other models from material science that share a common structure,
e.g. the Landau-de Gennes model for nematic liquid crystals. Moreover,
working with vector-valued, instead of manifold-valued, maps allows for
more flexibility. On the other hand, if~$u\colon\Omega\to\R^m$ does
not take values uniformly close to~$\NN$ but only close in, say, an integral sense
(e.g. $\int_\Omega\dist(u, \, \NN)$ is small) then it might not be
obvious to extract the topological information carried by~$u$.
For instance, in the Ginzburg-Landau theory, this task is accomplished by 
means of the distributional Jacobian. However, this tool is only available
when the distinguished manifold~$\NN$ has a special structure
--- typically, when $\NN$ is a sphere --- and cannot be applied to some
cases that are relevant to applications, for instance, 
when~$\NN$ is a real projective plane~$\R\mathrm{P}^2$, as is the case
in many models for liquid crystals.

The goal of this paper is to define an operator, $\S$,
such that~$\S(u)$ corresponds to the set of topological singularities of~$u$ and
plays the r\^ole
of a ``generalised Jacobian'', which can be applied to more general 
target manifolds~$\NN$. The properties of~$\S$
are stated in our main result, Theorem~\ref{th:Stop} 
in Section~\ref{subsect:theorem} below.
As the distributional Jacobian,
this operator captures topological information and enjoys
compactness properties, and in fact it reduces to the distributional 
Jacobian in the special case~$\NN\simeq\SS^{n}$.
The construction of~$\S$ is carried out in the setting of
flat chains with coefficients in a normed abelian group.
This approach has been proposed by Pakzad and Rivi\`ere~\cite{PakzadRiviere},
in the context of manifold-valued maps, in order to characterise
strong limits of smooth $\NN$-valued maps in $W^{1, p}(B^d, \, \NN)$.
Because we are interested in vector-valued maps, our construction 
is different from theirs, and relies on the ``projection trick''
devised by Hardt, Kinderlehrer and Lin~\cite{HKL} 
(see also~\cite{Hajlasz, BousquetPonceVanSchaftingen}).
Eventually, we  generalise Pakzad and Rivi\`ere's main result 
to a broader range of values for the exponent~$p$,
see Theorem~\ref{th:PR} in Section~\ref{subsect:intro-applications}.

In this paper, we discuss some applications of the operator~$\S$ to the study
of manifold-valued functional spaces. 
In addition to the aforementioned generalisation of 
the result by Pakzad and Rivi\`ere (Theorem~\ref{th:PR}), 
we study manifold-valued spaces of functions of bounded variation. We 
show weak density of smooth maps in~$\BV(\Omega, \, \NN)$,
see Theorem~\ref{th:BV_density} in Section~\ref{subsect:intro-applications}, 
thus generalising a result by Giaquinta and Mucci~\cite{GiaquintaMucci}.
We also discuss the lifting problem in~BV (see, for instance, \cite{DavilaIgnat})
for a larger class of manifolds~$\NN$, see Theorem~\ref{th:BV_lifting}
in Section~\ref{subsect:intro-applications}.
{\BBB Further applications 
to variational problems, including the asymptotic behaviour
of the Landau-de Gennes model for liquid crystals,
will be investigated in forthcoming work~\cite{CO2}.}
As is the case for the distributional Jacobian
in the Ginzburg-Landau theory, we expect that~$\S$ 
might be used to identify the set where the energy concentrates
and characterise the limiting energy densities.

The plan of the paper is the following. After recalling some background 
in Section~\ref{subsect:intro-background}, we sketch our construction in 
Section~\ref{subsect:intro-construction}, and we present the statements
of Theorems~\ref{th:PR}, \ref{th:BV_density}, \ref{th:BV_lifting} 
in Section~\ref{subsect:intro-applications}.
In Section~\ref{sect:preliminaries}, we review some preliminary material
about flat chains 
(Section~\ref{subsect:flat_chains}), topology 
(Sections~\ref{subsect:group_norm}--\ref{subsect:RR}), and manifold-valued Sobolev spaces 
(Section~\ref{subsect:manifold-Sobolev}). The main technical result
of this paper, Theorem~\ref{th:Stop}, which gives the existence of the operator~$\S$,
is stated in Section~\ref{subsect:theorem}. The rest of Section~\ref{sect:Stop}
is devoted to the proof of Theorem~\ref{th:Stop} and of Theorem~\ref{th:PR},
which we recover as a corollary of Theorem~\ref{th:Stop}.
Finally, Section~\ref{sect:BV} contains the applications to 
manifold-valued BV spaces, with the proofs of Theorem~\ref{th:BV_density}
and~\ref{th:BV_lifting}.

\subsection{Background and motivation}
\label{subsect:intro-background}

For the sake of motivation, consider the Ginzburg-Landau functional:
\begin{equation} \label{GLenergy}
 u\in W^{1,2}(\Omega,\, \R^2) \mapsto E^{\mathrm{GL}}_\varepsilon(u):= \int_{\Omega}
 \left\{ \frac{1}{2}\abs{\nabla u}^2 + \frac{1}{4\varepsilon^2} (1 - |u|^2)^2 \right\} \! ,
\end{equation}
where~
$\varepsilon >0$ is a small parameter. Functionals of this form arise as variational models 
for the study of type-II superconductivity. In this context, $u(x)$ represents the 
magnetisation vector at a point $x\in\Omega$ and the energy favours configurations 
with $|u(x)| = 1$, which have a well-defined direction of magnetisation 
as opposed to the non-superconducting phase $u=0$.
Let $\SS^1$ denote the unit circle in the plane $\R^2$.
As is well known, minimisers~$u_\varepsilon$ subject to a
($\varepsilon$-independent) boundary condition 
$u_{\varepsilon|\partial\Omega} = u_{\mathrm{bd}}\in W^{1/2, 2}(\partial\Omega, \, \SS^1)$
satisfy the sharp energy bound $E_\varepsilon(u_\varepsilon)\leq C|\log\varepsilon|$
for some $\varepsilon$-independent constant~$C$ (see e.g.~\cite[Proposition~2.1]{Riviere-DenseSubsets}).
In particular, $u_\varepsilon$ takes values ``close'' to~$\SS^1$ when
$\varepsilon$ is small, in the sense that
$\int_\Omega (1 - |u_\varepsilon|^2)^2\leq C\varepsilon^2|\log\varepsilon|$.
Despite the lack of uniform energy bounds, under suitable conditions on~$u_{\mathrm{bd}}$,
minimisers $u_\varepsilon$ converge to a limit map~$u_0\colon\Omega\to\SS^1$, which is
smooth except for a singular set of codimension two
(see e.g.~\cite{BBH, LinRiviere, BethuelBrezisOrlandi, JerrardSoner-GL, ABO2, 
SandierSerfaty, BethuelOrlandiSmets-Annals}). 
Moreover, the singular set of~$u_0$ is itself a minimiser --- in a suitable sense --- 
of some ``weighted area'' functional~\cite{ABO2}. 
The emergence of singularities in the limit map~$u_0$
is related to topological obstructions, which may prevent the existence of a map
in $W^{1, 2}(\Omega, \,  \SS^1)$ that satisfies the boundary conditions. 

There are other functionals, arising as variational models for material science, which share
a common structure with~\eqref{energy}, i.e. they can be written in the form
\begin{equation} \label{energy}
 u\in W^{1,k}(\Omega,\, \R^m) \mapsto E_\varepsilon(u):= \int_{\Omega}
 \left\{ \frac{1}{k}\abs{\nabla u}^k + \frac{1}{\varepsilon^2} f(u) \right\} \! .
\end{equation}
Here $f\colon\R^m\to\R$ is a non-negative, smooth potential that satisfies suitable 
coercivity and non-degeneracy conditions, and $\NN := f^{-1}(0)$ is assumed to be a non-empty,
smoothly embedded, compact, connected submanifold of~$\R^m$ without boundary.
The elements of~$\NN$ correspond to the ground states for the material, i.e.
the local configurations that are most energetically convenient. 
An important example is the Landau-de Gennes model for nematic liquid crystals
(in the so-called one-constant approximation, see e.g.~\cite{deGennes}). In this case, $k=2$ and
the distinguished manifold is a real projective plane~$\NN = \PR$, whose elements
describe the locally preferred direction of alignment of the constituent molecules (which 
might be schematically described as un-oriented rods).

As in the Ginzburg-Landau case, topological obstructions may imply the
\emph{lack} of an extension operator 
$W^{1-1/k, k}(\partial\Omega, \, \NN)\to W^{1, k}(\Omega, \, \NN)$
(see for instance \cite{Bethuel-Extension}). As a consequence, 
minimisers~$u_\varepsilon$ subject to a Dirichlet boundary condition
$u_\varepsilon = u_{\mathrm{bd}}\in W^{1-1/k, k}(\partial\Omega, \, \NN)$ may not satisfy uniform 
energy bounds with respect to~$\varepsilon$.
Compactness results in the spirit of the Ginzburg-Landau theory 
have been shown for minimisers of the Landau-de Gennes
functional~\cite{MajumdarZarnescu, pirla, GolovatyMontero, pirla3}.
However, some points that are understood in the Ginzburg-Landau theory 
--- for instance, a variational characterisation of the singular set of the limit
or a description of the problem in terms of~$\Gamma$-convergence,
as in~\cite{JerrardSoner-GL, ABO2, AlicandroPonsiglione} ---
are still missing, even for the Landau-de Gennes functional.

A key tool in the analysis of the Ginzburg-Landau functional is the distributional Jacobian.
In case~$d=m=2$, the distributional Jacobian~$\mathrm{J}u$ 
of a map $u\in (L^\infty\cap W^{1,1})(\R^2, \, \R^2)$ is defined as the
distributional curl of the field
$\frac{1}{2}(u^1\partial_1u^2-u^2\partial_1 u^1, \, u^1\partial_2 u^2-u^1\partial_2 u^1)$.
Equivalently, in the language of differential forms,
$\mathrm{J}u := \star\d u^*\omega_{\SS^1}$, 
where~$\star$ denotes the Hodge duality operator 
and~$\omega_{\SS^1}(y) := \frac{1}{2}(y^1\d y^2 - y^2\d y^1)$
is the $1$-homogeneous extension of the renormalised volume form on~$\SS^1$.
The purpose of the distibutional Jacobian is two-fold:
on one hand, it captures topological information associated with~$u$, as is 
demonstrated by several formulas relating the Jacobian with the topological degree 
(see e.g.~\cite[Theorem~0.8]{BourgainBrezisMironescu2005});
on the other hand, it enjoys compactness properties
--- for instance, despite being a quadratic operator, it is stable under
weak $W^{1,2}$-convergence.
Unfortunately, an adequate notion of Jacobian may be missing for general manifolds~$\NN$.
Consider the following simple example: let~$S$ be a $(d-k)$-plane in~$\R^d$, and
let~$u\colon\Omega\setminus S\to\NN$ be a material
configuration that is smooth everywhere, except at~$S$.
Then~$S$ can be encircled by a $(k-1)$-dimensional
sphere~$\Sigma\subseteq\Omega\setminus S$, and the (based) homotopy class of
$u_{|\Sigma}\colon\Sigma\to\NN$ defines an element of~$\pi_{k-1}(\NN)$
which, roughly speaking, characterises the behaviour of the material around the defect.
(This is the basic idea of the topological classification of defects
in ordered materials; see e.g.~\cite{Mermin} for more details.)
If $\pi_{k-1}(\NN)$ contains elements of finite order, these cannot be realised 
via integration of a differential form, so no notion of Jacobian that can be 
expressed as a differential form is able to capture such homotopy classes of defects. 
An example is provided by the Landau-de Gennes model for nematic liquid crystals, where
$k=2$, $\NN\simeq\PR$ and~$\pi_1(\PR)\simeq\Z/2\Z$.

The aim of this paper is to construct an object that
(i) brings topological information and (ii) enjoys compactness properties even 
when the distributional Jacobian is not defined, in particular when~$\pi_{k-1}(\NN)$
contains elements of finite order. A notion of ``set of topological singularities'' 
for a manifold-valued Sobolev map was already introduced by
Pakzad and Rivi\`ere~\cite{PakzadRiviere}, using the language of flat chains. 
Roughly speaking, a flat chain of dimension~$n$
with coefficients in an abelian group~$\G$ is described by a collection of
$n$-dimensional sets, carrying multiplicities that are elements of~$\G$
(see~\cite{FedererFleming, Fleming}). The group of flat $n$-chains with coefficients
in~$\G$ can be given a norm, called the flat norm, which satisfies useful compactness
properties. Given integers numbers $2 \leq k \leq d$ 
and~$u\in W^{1, k-1}(B^d, \, \NN)$, the topological singular set of~$u$  
`\`a la Pakzad-Rivi\`ere' is a flat chain $\S^{\mathrm{PR}}(u)$ of dimension~$(d-k)$
with coefficients in~$\pi_{k - 1}(\NN)$, and has the following property: 
$u$ can be $W^{1,k-1}$-strongly approximated by smooth maps~$\Omega\to\NN$ if and only
if~$\S^{\mathrm{PR}}(u)= 0$ \cite[Theorem~II]{PakzadRiviere}.
The construction we carry out here is different (and relies on ideas from \cite{HKL}),
as we want to deal with vector-valued maps~$u\colon\Omega\to\R^m$
instead of manifold-valued ones.
However, following Pakzad and Rivi\`ere, we work in the formalism of flat chains.
We discuss the link between Pakzad and Rivi\`ere's construction and the one
presented here in Section~\ref{subsect:Sobolev-NN}.

\subsection{Sketch of the construction}
\label{subsect:intro-construction}

Throughout the paper, $d$, $m$, $k$ will be integer numbers with $\min\{d, \, m\}\geq k\geq 2$,
$\Omega$ will be a smooth, bounded domain in~$\R^d$, and $\NN$ will denote
a smooth submanifold of~$\R^m$ without boundary. We make the following assumption
on~$\NN$ and~$k$:

\begin{enumerate}[label=(H), ref=\textrm{H}]
 \item \label{H} $\NN$ is compact and $(k-2)$-connected,
 that is $\pi_0(\NN) = \pi_{1}(\NN) = \ldots = \pi_{k-2}(\NN) = 0$. In case~$k=2$,
 we also assume that~$\pi_1(\NN)$ is abelian.
\end{enumerate}

The integer~$k$ is thus related to the topology of~$\NN$, and represents the
codimension of the (highest-dimensional) topological singularities for $\NN$-valued maps.
{\BBB 
The dimension of~$\NN$ plays no explicit r\^ole in our construction, hence
it is not specified.}
Under the assumption~\eqref{H}, the group~$\pi_{k-1}(\NN)$ is abelian, and will be the 
coefficient group for our flat chains. As noted above, $\pi_{k-1}(\NN)$ classifies 
the topological defects of $\NN$-valued maps. 
We will endow~$\pi_{k-1}(\NN)$ with a norm, see Section~\ref{subsect:group_norm}.

The construction we carry out has been introduced by Hardt, Kinderlehrer and 
Lin~\cite{HKL} as a method to produce manifold-valued comparison maps with suitable 
properties.
{\BBB This approach has been used by Haj\l{}asz~\cite{Hajlasz}, 
to prove strong (resp., sequential weak)
density of smooth maps in $W^{1,p}(\Omega, \, \NN)$ in case~$\NN$ is 
$\lfloor p \rfloor$-connected (resp., $(\lfloor p \rfloor-1)$-connected).
It has also been used by Bousquet, Ponce 
and Van Schaftingen~\cite{BousquetPonceVanSchaftingen}, who extended 
Haj\l{}asz's results to the setting of fractional Sobolev spaces~$W^{s,p}$ with~$s\geq 1$.
We sketch now the main ideas of our construction.}

It is impossible to construct a smooth projection 
of~$\R^n$ onto a closed manifold~$\NN$. However, as noted by Hardt and
Lin~\cite[Lemma~6.1]{HardtLin-Minimizing}, under the assumption~\eqref{H}
it is possible to construct a smooth projection 
$\RR\colon\R^m\setminus\X\to\NN$, where~$\X$ is a union of~$(m-k)$-manifolds.
Given a smooth map~$u\colon\R^d\to\R^m$, one could identify the 
set of topological singularities of~$u$ with $u^{-1}(\X)$, which is exactly the set
where the reprojection $\RR\circ u$ fails to be well-defined, but $u^{-1}(\X)$ may be 
very irregular even if~$u$ is smooth. However, Thom transversality theorem implies that,
for a.e. $y\in\R^m$, the set $(u-y)^{-1}(\X)$ is indeed a union of
$(d-k)$-dimensional manifolds. This set can be equipped, in a natural way,
with multiplicities in~$\pi_{k-1}(\NN)$,
so to define a flat chain~$\S_y(u)$ of dimension~$d-k$.
Thus, we define the set of topological singularities of~$u$ as a
map~$y\in\R^m\mapsto\S_y(u)$ with values in the group of flat chains.

By integrating over~$y\in\R^m$ according to the strategy devised in~\cite{HKL},
and applying the coarea formula, one obtains estimates on~$\S_y(u)$ depending on the Sobolev
norms of~$u$. Then, by density, one can define~$\S_y(u)$ in case~$u$ is a Sobolev map,
thus obtaining an operator
\[
 \S\colon (L^\infty\cap W^{1,k-1})(\Omega,\, \R^m) \to
 L^1(\R^m; \, \F_{d-k}(\overline\Omega; \, \pi_{k-1}(\NN)) )
\]
Here $\F_{d-k}(\Omega; \, \pi_{k-1}(\NN))$ denotes the normed $\pi_{k-1}(\NN)$-module
of~$(d-k)$-dimensional flat chains in~$\Omega$
with coefficients in~$\pi_{k-1}(\NN)$ (see Section~\ref{subsect:relative_chains}), and
$L^1(\R^m; \, \F_{d-k}(\Omega; \, \pi_{k-1}(\NN))) =: Y$
is the set of Lebesgue-measurable maps
$S\colon\R^m \to \F_{d-k}(\Omega; \, \pi_{k-1}(\NN))$ such that
\begin{equation}\label{integral_flat}
 \norm{S}_Y := \int_{\R^m} \F_\Omega(S_y) \, \d y < +\infty
\end{equation}
($\F_\Omega$ being the natural norm on $\F_{d-k}(\Omega; \, \pi_{k-1}(\NN))$, see 
Section~\ref{subsect:relative_chains}). In general, $Y$ is not a vector space but
it is a $\pi_{k-1}(\NN)$-module, and the left hand side of~\eqref{integral_flat} 
defines a norm on~$Y$. The operator~$\S$ is continuous in the following 
sense: if $(u_j)_{j\in\N}$ is a sequence of maps such that~$u_j\to u$ strongly 
in~$W^{1,k-1}$ and~$\sup_j\|u_j\|_{L^\infty} < +\infty$, then
$\|\S(u_j) - \S(u)\|_Y \to 0$. The same remains true if the sequence~$(u_j)_{j\in\N}$
is assumed to converge only \emph{weakly} in~$W^{1,k}$ and to be uniformly
bounded in~$L^\infty$; therefore, some of the compensation compactness properties 
that are typical of the Jacobian are retained by~$\S$. Moreover, $\S$ carries
topological information on the map~$u$. Indeed, the intersection (in a suitable sense:
see Section~\ref{subsect:intersection}) between~$\S_y(u)$ and, say,
a $k$-disk~$R$ completely determines the homotopy class of~$\RR\circ(u-y)$ on~$\partial R$.
A precise statement of these properties, which requires some notation, is given in 
Theorem~\ref{th:Stop}.

In the special case~$\NN=\SS^{k-1}$ (the unit sphere in $\R^k$), $\X = \{0\}\subseteq\R^k$ 
and~$\RR\colon\R^k\setminus\{0\}\to\SS^{k-1}$ is the radial projection given 
by~$\RR(y) = y/|y|$,
we have~$\pi_{k-1}(\SS^{k-1})\simeq\Z$ and so elements of
$\F_{d-k}(\Omega; \, \pi_{k-1}(\SS^{k-1}))$ have an alternative description as
integer currents. Moreover, $\S_y(u)$ is related to the distributional Jacobian, as 
for any~$u\in (L^\infty\cap W^{1,k-1})(\Omega, \, \R^k)$ there holds
\begin{equation} \label{Jacobian_integral}
 \mathrm{J} u = \frac{1}{\omega_k}\int_{\R^m} \S_y(u) \, \d y,
\end{equation}
where~$\omega_k$ is the volume of the unit $k$-disk and the integral in the right-hand side
is intended in the sense of distributions (see e.g. \cite[Theorem~1.2]{JerrardSoner-Jacobians}).
However, if $\pi_{k-1}(\NN)$ is a finite group (or, more generally, if it only contains 
elements of finite order), then there is no meaningful way to define the integral of~$\S_y(u)$
with respect to the 
Lebesgue measure~$\d y$, as $\pi_{k-1}(\NN)\otimes\R = 0$.

It is worth noticing that the proof of our main result, Theorem~\ref{th:Stop},
does not strictly rely upon the manifold structure of~$\NN$. What is needed, 
is the existence and regularity 
of the exceptional set~$\X$ and the retraction~$\RR$, in order to be 
able to apply Thom transversality theorem. 
This suggests a possible extension to more general targets~$\NN\subseteq\R^m$
such as, for instance, finite simplicial complexes.

\subsection{Applications}
\label{subsect:intro-applications}

We have chosen to work with vector-valued maps, instead of manifold-valued ones, as 
we were motivated by the applications to variational problems, such as~\eqref{energy}.
We expect that the results presented in this paper could be used as tools to obtain
energy lower bounds for~\eqref{energy} in the spirit of~\cite{Sandier, Jerrard}, or even
$\Gamma$-convergence results along the lines of~\cite{ABO2}.
These questions will be addressed in a forthcoming work~\cite{CO2}. Instead, 
we discuss here a few applications of this approach to classical questions
in the theory of manifold-valued function spaces. 

The first application concerns density of smooth maps.
We define $W^{1,p}(B^d, \,\NN)$ as the set of maps~$u\in W^{1,p}(B^d, \,\R^m)$
such that~$u(x)\in\NN$ for a.e.~$x\in\Omega$, and endow it with the distance induced
by~$W^{1,p}(B^d, \,\R^m)$. Bethuel~\cite{Bethuel-Density} showed that smooth maps
are dense in~$W^{1,p}(B^d, \,\NN)$ if and only if~$\pi_{\lfloor p\rfloor}(\NN)=0$
or~$p\geq d$. Maps that belong to the strong-$W^{1,p}$ closure of
$C^\infty(\overline{B}^{k}, \, \SS^{k-1})$
have been characterised in~\cite{Bethuel1990}, in case~$p=k-1$, 
and in~\cite{BethuelCoronDemengelHelein}, in case $k - 1 < p < k$, using 
the distributional Jacobian. Pakzad and Rivi\`ere~\cite[Theorem~II]{PakzadRiviere}
generalised this result to other target manifolds, working in the setting of flat chains.
As a corollary of our construction, we recover Pakzad and Rivi\`ere's result.

\begin{theorem} \label{th:PR}
 Let~$d\geq 2$ be an integer, let~$1 \leq p < d$, and let $\NN$ be a compact, smooth, 
 $(\lfloor p\rfloor-1)$-connected manifold without boundary. In case $1\leq p < 2$, 
 we also suppose that $\pi_1(\NN)$ is abelian. Then, there exists a continuous map
 \[
  \S^{\mathrm{PR}}\colon W^{1, p}(B^d, \, \NN)\to
  \F_{d - \lfloor p \rfloor - 1}(\overline{B}^d; \, \pi_{\lfloor p\rfloor}(\NN))
 \]
 such that~$\S^{\mathrm{PR}}(u) =0$ if and only if~$u$ is a 
 strong~$W^{1,p}$-limit of smooth maps $\overline{B}^d\to\NN$.
\end{theorem}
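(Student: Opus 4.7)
My plan is to deduce Theorem~\ref{th:PR} as a corollary of Theorem~\ref{th:Stop} by specialising to $\NN$-valued maps. I begin by setting $k := \lfloor p \rfloor + 1$, so that the connectedness hypothesis on~$\NN$ in Theorem~\ref{th:PR} is exactly the assumption~\eqref{H}: $\NN$ is $(k-2)$-connected, and the abelianness of~$\pi_1(\NN)$ required in the range~$1 \leq p < 2$ matches the case~$k = 2$ of~\eqref{H}. Since~$\NN$ is compact, $W^{1,p}(B^d, \, \NN)$ embeds continuously in $(L^\infty \cap W^{1,k-1})(B^d, \, \R^m)$, and so Theorem~\ref{th:Stop} produces for every such~$u$ a family of flat chains $\{\S_y(u)\}_{y \in \R^m}$ of dimension $d - k = d - \lfloor p \rfloor - 1$ with coefficients in $\pi_{k-1}(\NN) = \pi_{\lfloor p\rfloor}(\NN)$, varying in an $L^1$ manner with respect to~$y$.

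I would define $\S^{\mathrm{PR}}(u)$ by selecting a distinguished section of $y \mapsto \S_y(u)$, concretely $\S_{y_0}(u)$ for a generic~$y_0$ in a fixed bounded region $V \subseteq \R^m \setminus \X$ adjacent to~$\NN$ (say, inside a small geodesic ball of a connected component of $\R^m \setminus (\NN \cup \X)$). To prove independence of $y_0$, I would exploit the intersection-theoretic content of Theorem~\ref{th:Stop}: for two regular points $y_0, y_0'$ joined by a generic arc $\{y_t\}$, the chains $\S_{y_0}(u)$ and $\S_{y_0'}(u)$ are cobordant via the $(d-k+1)$-chain $\bigcup_t (u - y_t)^{-1}(\X)$, hence they coincide as flat chains in~$\overline{B}^d$. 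The same cobordism argument, applied to a smooth $\NN$-valued~$v$ and a path reaching a $y$ for which $v - y$ avoids~$\X$, forces $\S^{\mathrm{PR}}(v) = 0$. Continuity of the resulting map $\S^{\mathrm{PR}}\colon W^{1,p}(B^d, \, \NN) \to \F_{d - \lfloor p \rfloor - 1}(\overline{B}^d; \, \pi_{\lfloor p \rfloor}(\NN))$ then follows from the $L^1$-continuity of $\S$ in the strong $W^{1,k-1}$-topology asserted by Theorem~\ref{th:Stop}, composed with the continuous embedding $W^{1,p} \hookrightarrow W^{1,k-1}$.

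The forward direction of the equivalence (smooth-approximable~$u$ has $\S^{\mathrm{PR}}(u) = 0$) then follows directly by continuity from the preceding paragraph. The main obstacle is the converse: from the vanishing $\S^{\mathrm{PR}}(u) = 0$ one must build a sequence of smooth $\NN$-valued approximants in $W^{1,p}$. I would attack this by following the cubication-and-surgery strategy of Bethuel~\cite{Bethuel-Density} and Pakzad--Rivi\`ere~\cite{PakzadRiviere}: partition~$B^d$ into cubes of side~$h$, perform standard mollification away from a bad $(d-k+1)$-dimensional subskeleton where the topological obstruction concentrates, and remove the residual defects by dipole-type constructions along the cobounding chain whose existence is granted by $\S^{\mathrm{PR}}(u) = 0$. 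The delicate point, and the novelty compared with the integer case $p = k-1$, is the uniform $W^{1,p}$-control of these dipole modifications throughout the range $k-1 \leq p < k$: one must balance the energetic cost of each dipole at scale~$h$ against the cardinality of bad cells, exploiting the estimates on $\S$ given by the coarea-type integration in Theorem~\ref{th:Stop}.
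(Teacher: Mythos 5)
Your overall plan---specialise Theorem~\ref{th:Stop} with $k=\lfloor p\rfloor+1$ and get continuity from~\eqref{S_flat}---is the same as the paper's, but two steps of your argument have genuine gaps. First, your definition of $\S^{\mathrm{PR}}(u)$ as $\S_{y_0}(u)$ for a generic small $y_0$ rests on the claim that, for $y_0,y_0'$ joined by a generic arc, the chains $\S_{y_0}(u)$ and $\S_{y_0'}(u)$ are ``cobordant via $\bigcup_t(u-y_t)^{-1}(\X)$, hence coincide''. Cobordant does not imply equal: differing by a boundary is exactly the conclusion of Proposition~\ref{prop:cobordism}, and it is strictly weaker than the equality you need. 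Moreover, for a general $u\in W^{1,p}(B^d,\NN)$ the sets $(u-y_t)^{-1}(\X)$ are not defined; $\S_y(u)$ is obtained as a limit along $\R^m$-valued smooth approximants, whose preimages of~$\X$ are nonempty even though $|y_t|<\delta_0$. The paper proves the $y$-independence differently: it defines $\S^{\mathrm{PR}}$ on the dense class $R^0_{k-1}(B^d,\NN)$ by the explicit formula~\eqref{Stop_PR} and shows in Lemma~\ref{lemma:Stop_PR} that $\S_y(u)=\S^{\mathrm{PR}}(u)$ for a.e.\ $|y|<\delta_0$, using that $\spt\S_y(u)$ lies in the singular skeleton, the constancy theorem, and Property~\eqref{S_intersection}; constancy in $y$ for general $\NN$-valued maps (Proposition~\ref{prop:N-valued}) then follows by density. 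Related to this, Theorem~\ref{th:Stop} only gives continuity of $y\mapsto\S_y(u)$ in the $y$-integrated \emph{relative} norm $\F_{B^d}$, whereas the statement requires a single chain in $\F_{d-\lfloor p\rfloor-1}(\overline{B}^d;\pi_{\lfloor p\rfloor}(\NN))$ continuous for the full flat norm; passing from the integral estimate to a fixed $y_0$ (an averaging argument plus the a.e.\ constancy in $y$) and from $\F_{\Omega}$ to $\F$ (working on a slightly larger domain, cf.~\eqref{CO2}) are exactly the points your sketch does not address, and they are where the paper's extra ``$+1$'' term in the estimate on $R^0_p$ comes from.

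Second, the implication ``$\S^{\mathrm{PR}}(u)=0\Rightarrow u$ is a strong $W^{1,p}$-limit of smooth maps'' is only outlined: you name the delicate point (uniform $W^{1,p}$-control of the dipole insertions for $k-1\le p<k$, balancing the cost per cell against the number of bad cells) but do not prove it, so the hard direction of the equivalence is not established. The paper does not redo this surgery at all: it invokes the ``removal of the singularities'' results of Pakzad and Rivi\`ere (the arguments of \cite[Theorem~II]{PakzadRiviere}, and \cite[Proposition~5.1]{PakzadRiviere} in the BV setting), which hold in the whole range $k-1\le p<k$; the restriction $\lfloor p\rfloor\in\{1,d-1\}$ in their work comes from their construction of the singular set, which is precisely what the operator~$\S$ replaces here. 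So either carry out your dipole estimate in full, or reduce, as the paper does, to the already established removal-of-singularities statements once $\S^{\mathrm{PR}}$ is defined and shown continuous.
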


In contrast with Pakzad and Rivi\`ere, we do not need to impose the technical restriction
$\lfloor p\rfloor\in\{1, \, d-1\}$. The arguments in~\cite{PakzadRiviere} rely on fine
results in Geometric Measure Theory~\cite{GiaquintaModicaSoucek-I} 
(which require $\lfloor p\rfloor\in\{1, \, d-1\}$); instead, the proof
of Theorem~\ref{th:PR} follows directly from our main construction, which is based
essentially on the coarea formula, combined with the 
``removal of the singularities'' results in~\cite{PakzadRiviere}.
It is worth mentioning that the theorem may fail if the domain is not a disk
(see the counterexamples in~\cite{HangLin} and the discussion in~\cite{PakzadRiviere}).

We next drive our attention to manifold-valued $\BV$-maps.
Recall that the space~$\BV(\Omega, \, \R^m)$, by definition, consists of those
functions~$u\in L^1(\Omega, \, \R^m)$ whose distributional derivative~$\D u$ is a finite 
Radon measure. The BV-norm is defined by $\|u\|_{\BV(\Omega)} := \|u\|_{L^1(\Omega)} 
+ |\D u|(\Omega)$, where $|\cdot|$ denotes the total variation measure.
We say that $u\in\SBV(\Omega, \, \R^m)$ if there exist
Borel functions $\psi_0$, $\psi_1\colon\Omega\to\R^{m\times d}$
such that $\psi_j$ is $\H^{d-j}$-integrable, for $j\in\{0, \, 1\}$, and 
$\D u = \psi_0\H^d + \psi_1\H^{d-1}$.
We say that a sequence~$u_j$ of~$\BV$-functions 
converges weakly to~$u$ if and only if~$u_j\to u$ strongly in~$L^1$
and~$\D u_j \rightharpoonup^* \D u$ weakly$^*$ as elements of the
dual~$C_0(\Omega, \, \R^m)^\prime$.
We define $\BV(\Omega, \, \NN)$ (resp., $\SBV(\Omega, \, \NN)$)
as the set of maps~$u\in\BV(\Omega, \, \R^m)$ (resp., $u\in\SBV(\Omega, \, \R^m)$)
such that~$u(x)\in\NN$ for a.e.~$x\in\Omega$.

\begin{theorem} \label{th:BV_density}
 Let~$\NN$ be a smooth, compact, connected manifold without boundary, with abe\-lian~$\pi_1(\NN)$.
 Then, $C^\infty(\overline{B}^d, \, \NN)$ is sequentially 
 weakly dense in~$\BV(B^d, \, \NN)$.
\end{theorem}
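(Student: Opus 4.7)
The plan is to combine the operator~$\S$ from Theorem~\ref{th:Stop} with a mollification argument and a topological surgery. The hypothesis~\eqref{H} becomes ``$\NN$ is connected with abelian $\pi_1(\NN)$'' in the case $k=2$, which is exactly what is assumed here, so $\S$ is available. Since $\BV \not\subseteq W^{1,1}$, the operator cannot be applied to~$u$ directly: I will first mollify, then project to $\NN$ via the Hardt--Kinderlehrer--Lin trick, and finally remove the resulting topological defects by surgery.

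For the regularization and projection, extending $u$ across $\partial B^d$ (by reflection) and convolving with a smooth kernel $\rho_\varepsilon$ yields $u_\varepsilon\in C^\infty(\overline B^d,\,\R^m)\cap L^\infty$ with $u_\varepsilon\to u$ in $L^1$ and $\|\nabla u_\varepsilon\|_{L^1(B^d)}\to |\D u|(B^d)$. For almost every small $y\in\R^m$, the set $\Sigma_\varepsilon^y:=(u_\varepsilon-y)^{-1}(\X)$ is $(d-2)$-rectifiable by Thom transversality, and $v_\varepsilon^y:=\RR\circ(u_\varepsilon-y)$ is a smooth $\NN$-valued map on $B^d\setminus\Sigma_\varepsilon^y$. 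Using the pointwise bound $|\nabla\RR(z)|\lesssim\dist(z,\,\X)^{-1}$, the codimension-$2$ structure of~$\X$ (which makes $\dist(\cdot,\,\X)^{-1}$ locally integrable in $\R^m$), and a Fubini averaging in $y$, I select $y_\varepsilon\to 0$ for which $v_\varepsilon^{y_\varepsilon}\to u$ in $L^1$ (since $\RR$ is the identity on~$\NN$ and continuous nearby), $\|\nabla v_\varepsilon^{y_\varepsilon}\|_{L^1(B^d)}\leq C\,|\D u|(B^d)$, and the associated topological defect $\S_{y_\varepsilon}(u_\varepsilon)\in\F_{d-2}(\overline B^d;\,\pi_1(\NN))$ provided by Theorem~\ref{th:Stop} has uniformly bounded flat mass.

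For the surgery step, the flat-mass bound on $\S_{y_\varepsilon}(u_\varepsilon)$ lets me write it, up to an error of arbitrarily small flat norm, as the boundary of a $(d-1)$-chain $T_\varepsilon$ of uniformly bounded mass. Modifying $v_\varepsilon^{y_\varepsilon}$ in a thin tubular neighborhood of $T_\varepsilon$---cutting along $T_\varepsilon$ to trivialize the $\pi_1(\NN)$-monodromy (the abelianity of $\pi_1(\NN)$ being essential, since it guarantees that defect charges combine additively and can be cancelled by a single bounding chain) and re-gluing through a thin smooth transition layer in~$\NN$---produces a sequence $\tilde v_\varepsilon\in C^\infty(\overline B^d,\,\NN)$ with $\tilde v_\varepsilon\to u$ in $L^1$ and $\sup_\varepsilon|\D\tilde v_\varepsilon|(B^d)<+\infty$, since the added total variation is $\leq C\,\mathrm{mass}(T_\varepsilon)\,\mathrm{diam}(\NN)$ and the $L^1$ perturbation is supported on a set of vanishing volume. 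Any $\BV$-bounded $L^1$-convergent sequence automatically converges weakly$^*$ (weak$^*$ cluster points of the derivatives being forced to equal $\D u$), so $\tilde v_\varepsilon\rightharpoonup^* u$ in $\BV(B^d,\,\NN)$.

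The main obstacle is the surgery step: performing the cut-and-reglue so as to obtain a genuinely smooth (not merely $\SBV$) $\NN$-valued map, with quantitatively controlled total-variation increase. The abelianity of $\pi_1(\NN)$ is crucial---without it, defect charges would not cancel additively against a single bounding chain---and the uniform flat-mass bound for $\S_{y_\varepsilon}(u_\varepsilon)$ from Theorem~\ref{th:Stop} is what keeps the surgery cost uniform in~$\varepsilon$. A secondary technical point is balancing $y_\varepsilon\to 0$ against uniform $L^1$-estimates on $\nabla v_\varepsilon^{y_\varepsilon}$, which relies on the fact that $\RR$ is regular on a neighborhood of the image of~$u$.
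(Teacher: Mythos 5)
Your overall route is the same as the paper's: mollify, project with the Hardt--Kinderlehrer--Lin averaging trick, use~$\S$ to control the topological singularities, remove them, and pass to the limit. However, two steps contain genuine gaps. The first is the surgery step, which you yourself identify as the main obstacle. What the argument needs is that $\S_{y}(u_\varepsilon)$ is \emph{exactly} a relative boundary $\partial R_y$ of a chain with $\int\M(R_y)\,\d y$ controlled by $\|\nabla u_\varepsilon\|_{L^1}$ --- this is property~\eqref{S_boundary} of Theorem~\ref{th:Stop}, not something you can extract from a flat-norm bound ``up to an error of arbitrarily small flat norm'': any non-boundary residue would carry nontrivial homotopy classes and no smoothing could remove it. More seriously, ``cutting along $T_\varepsilon$ and re-gluing through a thin smooth transition layer'' does not, as described, produce a smooth (or even continuous) $\NN$-valued map: cutting and regluing across a hypersurface generically creates a jump, i.e.\ an $\SBV$ map, and turning it into a smooth map with total-variation increase of order $\M(T_\varepsilon)$ is precisely the nontrivial ``removal of the singularity'' construction of Pakzad and Rivi\`ere (\cite[Proposition~5.1]{PakzadRiviere}), which the paper invokes at exactly this point and which you neither prove nor cite. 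As written, the crux of the proof is asserted rather than established.

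The second gap is your insistence on $y_\varepsilon\to 0$. The averaged estimates (Lemma~\ref{lemma:projection_cont} and~\eqref{S_boundary}) produce a good translation $y$ in a ball $B^m_\delta$ of \emph{fixed} radius; if you shrink $\delta=\delta_\varepsilon\to 0$ to force $y_\varepsilon\to 0$, the averaging constant degenerates. Indeed, at points where $u_\varepsilon(x)$ is close to~$\X$ one only has $\int_{B^m_\delta}\abs{\nabla\RR(u_\varepsilon(x)-y)}\,\d y\sim\delta^{m-1}$, and $\abs{\nabla u_\varepsilon}$ concentrates exactly in the transition regions where $u_\varepsilon$ is far from~$\NN$ (your appeal to ``$\RR$ regular on a neighbourhood of the image of $u$'' fails there, since the mollified map does not take values near~$\NN$ in those regions); so the best you get is $\|\nabla(\RR\circ(u_\varepsilon-y_\varepsilon))\|_{L^1}\lesssim\delta_\varepsilon^{-1}\abs{\D u}(B^d)$, which is not uniform. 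The paper sidesteps this by keeping $\delta$ fixed and composing with the diffeomorphism $\RR_{y(j)}^{-1}$ of~$\NN$, i.e.\ working with $w_j=(\RR_{y(j)}^{-1}\circ\RR)(u_j-y(j))$: this keeps the maps $\NN$-valued, preserves the uniform $W^{1,1}$ bound, and restores a.e.\ convergence to~$u$ without requiring $y(j)\to 0$. You would need to incorporate both of these devices (the exact bounding chain from~\eqref{S_boundary} plus the Pakzad--Rivi\`ere removal, and the $\RR_y^{-1}$ correction) for the proof to close.
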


A similar result has been obtained by Giaquinta and Mucci~\cite[Theorem~2.13]{GiaquintaMucci},
who worked in the framework of currents (more precisely, in the class of cartesian currents,
see~\cite{GiaquintaModicaSoucek-I}).
Giaquinta and Mucci need the additional assumption that $\pi_1(\NN)$ 
contains no element of finite order, in order to apply the formalism of currents.
By working in the setting of flat chains, instead of currents, this assumption is not
required any more, although we still need that $\pi_1(\NN)$ be abelian.
In contrast with the scalar case, it may \emph{not} be possible
to construct approximating maps~$u_j\in C^\infty(\overline{B}^d, \, \NN)$ 
in such a way that $|\D u_j|(B^d)\to |\D u|(B^d)$ 
(see~\cite{GiaquintaMucci}).

The proofs of Theorems~\ref{th:PR} and~\ref{th:BV_density}
follow a strategy that was adopted by 
Bethuel, Brezis and Coron in~\cite{BethuelBrezisCoron-Relaxed}:
first we control the flat norm of the topological singular set, 
by means of the results in Section~\ref{sect:Stop},
then we ``remove the singularities'' using the results of~\cite{PakzadRiviere}.
The flat norm of the topological singular
set coincides with what Bethuel, Brezis and Coron referred to as ``minimal connection''.

Finally, we consider the \emph{lifting} problem in~$\BV$.
Let~$\pi\colon\EE\to\NN$ be the universal covering of~$\NN$. We choose a metric on~$\EE$
and an isometric embedding $\EE\hookrightarrow\R^\ell$ in such a way that
$\pi$ is a local isometry. We say that~$v\in\BV(\Omega, \, \EE)$ is a \emph{lifting}
for $u\in\BV(\Omega,\, \NN)$ if $u = \pi\circ v$ a.e. on~$\Omega$.

\begin{theorem}\label{th:BV_lifting}
 Let~$\Omega\subseteq\R^d$ be a smooth, bounded domain with~$d\geq 2$,
 and let~$\NN$ be a smooth, compact, connected manifold without boundary, with abelian~$\pi_1(\NN)$.
 There exists a constant~$C$ such that any~$u\in\BV(\Omega, \, \NN)$ admits
 a lifting~$v\in\BV(\Omega, \, \EE)$ satisfying $|\D v|(\Omega)\leq C|\D u|(\Omega)$.
 Moreover, if~$u\in\SBV(\Omega, \, \NN)$ then any lifting~$v$ of~$u$ belongs
 to~$\SBV(\Omega, \, \EE)$. 
\end{theorem}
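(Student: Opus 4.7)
The plan is to approximate $u$ by smooth maps, lift each smooth approximant by cutting $\Omega$ along a $(d-1)$-rectifiable surface of area controlled by $|\D u|(\Omega)$, and pass to the limit. The key fact is that $\pi\colon\EE\to\NN$ is a local isometry, so any lifting has the same absolutely continuous differential as $u$, while jumps across a cut are quantified by the norm on $\pi_1(\NN)$ from Section~\ref{subsect:group_norm}, which coincides with the translation distance of the corresponding deck transformation of $\EE$.

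First I would invoke Theorem~\ref{th:BV_density} to obtain smooth maps $u_n\in C^\infty(\overline\Omega,\NN)$ with $u_n\to u$ in $L^1$ and $\limsup_n |\D u_n|(\Omega)\le C|\D u|(\Omega)$, such a bound on the $\BV$-norms being available from the approximation scheme used to prove that theorem. For each smooth $u_n$, the only obstruction to a global smooth lifting is the induced homomorphism $\rho_n=(u_n)_*\colon\pi_1(\Omega)\to\pi_1(\NN)$. I would pick a $(d-1)$-rectifiable set $\Sigma_n\subset\overline\Omega$ with $\partial\Sigma_n\subset\partial\Omega$ such that $\Omega\setminus\Sigma_n$ is simply connected, with connected components $\Sigma_n^{(i)}$ dual to generators $\gamma_i$ of $\pi_1(\Omega)$, satisfying
\[
 \sum_i |\rho_n(\gamma_i)|_{\pi_1(\NN)}\,\H^{d-1}(\Sigma_n^{(i)})\le C|\D u_n|(\Omega).
\]
This follows from a Fubini/coarea argument: $|\rho_n(\gamma)|_{\pi_1(\NN)}$ equals the $\EE$-distance between any base point and its $\rho_n(\gamma)$-translate, hence it is bounded by the length of $u_n\circ\gamma'$ for any loop $\gamma'$ in the class $\gamma$, and averaging over a one-parameter family of parallel loops transverse to $\Sigma_n^{(i)}$ yields the claimed bound.

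I would then define $v_n$ as a smooth lift of $u_n$ on $\Omega\setminus\Sigma_n$ (possible by simple connectedness of the complement) and extend arbitrarily across $\Sigma_n$, obtaining $v_n\in\BV(\Omega,\EE)$ whose total variation is bounded by $|\D u_n|(\Omega)$ plus the jump contribution $\sum_i|\rho_n(\gamma_i)|_{\pi_1(\NN)}\H^{d-1}(\Sigma_n^{(i)})$; in particular $|\D v_n|(\Omega)\le C|\D u|(\Omega)$. After normalising $v_n$ to have zero mean, $\BV$-compactness gives a subsequence $v_n\to v$ in $L^1$ with $v\in\BV(\Omega,\EE)$, $\pi\circ v=u$ a.e.\ (by continuity of $\pi$ and pointwise convergence along the subsequence), and $|\D v|(\Omega)\le C|\D u|(\Omega)$ by lower semicontinuity. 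For the $\SBV$ statement, any lifting $v$ of $u$ is locally of the form $\sigma\circ u$ for some smooth section $\sigma$ of $\pi$, so its Cantor part corresponds to that of $\D u$; the jump set of $v$ is contained in the jump set of $u$ together with a $(d-1)$-rectifiable set of sheet transitions, so $u\in\SBV(\Omega,\NN)$ implies $v\in\SBV(\Omega,\EE)$.

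The main obstacle is the area estimate on the cut $\Sigma_n$, which requires simultaneous control of all generators of $u_{n*}$ via a careful Fubini argument. The hypothesis that $\pi_1(\NN)$ is abelian plays a crucial role here: the lifting obstruction then factors through $H_1(\Omega;\Z)$, allowing $\Sigma_n$ to be assembled from finitely many pieces dual to a $\Z$-basis of the image of $u_{n*}$, for which the area bound can be extracted component by component. Handling the uniformity of the constant $C$ as $n$ varies, and ensuring that the arbitrary extension across $\Sigma_n$ does not create additional Cantor mass in the limit, are the technical points that require the most care.
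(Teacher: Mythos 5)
Your overall strategy (smooth approximation, cut along a hypersurface of controlled area, lift on the cut complement, pass to the limit) is close in spirit to what the paper does, but two of its load-bearing steps have genuine gaps. First, the starting point: Theorem~\ref{th:BV_density} is stated and proved only for the ball $B^d$, whereas Theorem~\ref{th:BV_lifting} is for a general smooth bounded domain; the paper itself flags this just before its proof (``when the domain is a ball, the existence of a lifting in BV could be deduced by a density argument\dots but we give below a different proof which works on more general domains''). Weak density of $C^\infty(\overline{\Omega},\NN)$ in $\BV(\Omega,\NN)$ with a linear bound on the total variations is not established for general $\Omega$, so your approximating sequence $u_n$ is simply not available. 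The paper avoids this by approximating the ($\R^m$-valued, extended) map $u$ by smooth \emph{vector-valued} maps $u_j$, reprojecting via $\RR$ to get $\NN$-valued maps $w_j$ that are smooth outside a small singular set, and extracting from property (P\textsubscript{3}) of Theorem~\ref{th:Stop} a chain $R_j$ with $\partial R_j=\S_{y(j)}(u_j)$ in a cube $\Omega'\supset\!\supset\Omega$ and $\M(R_j)\le C\,|\D u|(\Omega)$; it is this chain, not a fixed topological cut, that plays the role of your $\Sigma_n$.

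Second, the cut construction itself. Demanding that $\Omega\setminus\Sigma_n$ be simply connected is impossible in general (for a knot complement in $\R^3$, cutting along a Seifert surface leaves a non-simply-connected set). Your homological fallback (abelian $\pi_1(\NN)$, so the obstruction factors through $H_1(\Omega;\Z)$) repairs this for the free part of $H_1(\Omega;\Z)$, but fails when the monodromy is nontrivial on torsion: jumps prescribed across oriented hypersurfaces ``dual to a $\Z$-basis'' produce a monodromy built from intersection-number homomorphisms $H_1(\Omega;\Z)\to\Z$, which annihilate torsion, whereas e.g. for $\Omega$ a tubular neighbourhood of an embedded $\PR$ in $\R^4$, $\NN=\PR$ and $u$ the (smooth) retraction onto the core, the monodromy is the identity of $\Z/2\Z$. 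Realising such monodromies requires cuts carrying $\pi_1(\NN)$-multiplicities, possibly non-orientable or with boundary inside $\Omega$ --- precisely flat chains with coefficients in $\pi_1(\NN)$, which is what $R_j$ is; the consistency of the jump is then checked in the paper through the intersection formula (P\textsubscript{1}) on the simply connected cube $\Omega'$. Finally, the SBV step is not correct as written: a lifting is in general \emph{not} locally of the form $\sigma\circ u$ for a smooth section $\sigma$ (take $u$ constant, so that $v$ is an arbitrary $\BV$ map into the discrete fibre), and the assertion that the sheet transitions form a rectifiable set carrying no Cantor mass is exactly what has to be proved. The paper's argument extends $\pi$ to a global smooth immersion $\tilde\pi\colon\R^\ell\to\R^{m+2\ell}$ and applies the $\BV$ chain rule to $\tilde\pi\circ v=\iota\circ u$, so that injectivity of $\nabla\tilde\pi$ forces $\D^{\mathrm{c}}v=0$; some argument of this kind is needed in place of your local-section claim.
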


The lifting problem in manifold-valued Sobolev spaces was studied 
by Bethuel and Chiron~\cite{BethuelChiron}, who proved that any map
$v\in W^{1, p}(\Omega, \, \NN)$ with $\Omega$ simply connected and~$p\geq 2$
has a lifting~$v\in W^{1, p}(\Omega,  \, \EE)$. (The particular case~$\NN\simeq\R\mathrm{P}^2$,
with applications to liquid crystals, was also studied by Ball and Zarnescu~\cite{BallZarnescu}).
As conjectured by Bethuel and Chiron~\cite[Remark~1]{BethuelChiron}, Theorem~\ref{th:BV_lifting}
implies that any map~$u\in W^{1,p}(\Omega, \, \NN)$, with~$p\geq 1$,
has a lifting~$v\in\BV(\Omega, \, \EE)$ 
(which may not belong to~$W^{1, p}$, see~\cite[Lemma~1]{BethuelChiron}).
The lifting problem in the space~$W^{s, p}(\Omega, \, \SS^1)$ has been extensively studied by 
Bourgain, Brezis, and Mironescu, see e.g.~\cite{BourgainBrezisMironescu, BourgainBrezisMironescu2005}. 
In the setting of BV-spaces, the lifting problem
has been previously studied by Davila and Ignat~\cite{DavilaIgnat},
Ignat~\cite{Ignat-Lifting} in case $\NN=\SS^1$, 
and recently by Ignat and Lamy~\cite{IgnatLamy}, in case $\NN=\R\mathrm{P}^{n}$. 
In contrast with Theorem~\ref{th:BV_lifting}, the results in~\cite{Ignat-Lifting, IgnatLamy}
are sharp, in the sense that they provide the optimal constant~$C$
such that $|\D v|(\Omega)\leq C|\D u|(\Omega)$; however, Theorem~\ref{th:BV_lifting} 
is robust, in that it applies to more general manifolds.
The proof of this theorem combines properties of the singular set~$\S_y(u)$ 
with a classical argument in topology, which gives the existence of the lifting 
for smooth functions~$u$, and which we revisit here in case the function~$u$ has jumps.

\goodbreak

\subsection{{\BBB Concluding remarks}}
{\BBB As remarked above, the techniques presented in this paper apply
to quite general target manifolds. 
While these methods capture effectively the 
topological singularities of the highest expected dimension (i.e., those of dimension~$d-k$),
a severe limitation is that they do not seem suitable to study
lower-dimensional topological singularities. For example, in case~$\NN=\PR$, $k=2$ and~$d=3$, 
a map $u\in W^{1,1}(B^3, \, \PR)$ may have both non-orientable line singularities 
(associated with $\pi_1(\PR)\simeq\Z/2\Z$) and point singularities,
associated with $\pi_2(\PR)$; these methods only provide information about the former ones.
Another example is the case~$\NN=\SS^2$,~$k=3$, $d=4$ and~$u\in W^{1,3}(B^4, \, \SS^2)$. Such a map~$u$
cannot have singularities of dimension~$d-k=1$, but it may have point singularities
which are not seen by the operator~$\S$ \cite{HardtRiviere}. These point singularities 
lead to the failure of sequential weak density of smooth maps in~$W^{1,3}(B^4, \, \SS^2)$,
as proved in a striking recent paper by Bethuel~\cite{Bethuel-Inventiones}. 

As shown in~\cite{PakzadRiviere} and in Theorem~\ref{th:PR}, the operator~$\S$
detects the local obstruction to approximability by smooth maps in manifold-valued
Sobolev spaces. At the current stage, it is not clear whether the ``projection approach''
could be used to detect the \emph{global} obstruction, introduced
in~\cite{HangLin}, as this would probably require a complete control 
of low-dimensional topological singularities.

Finally, another restriction lies in the choice of the target manifold.}
Closed manifolds~$\NN$ (or simplicial complexes) with non-abelian $\pi_1(\NN)$ are
excluded, because the theory of flat chains with coefficients
in a group~$\G$ requires~$\G$ to be abelian. 
However, in the topological obstruction theory,
this kind of restriction can be removed by using suitable technical tools 
(homology with local coefficients systems). This leaves a hope to extend, at least partially,
some of the results in this paper to the case of non-abelian $\pi_1(\NN)$. Density
(in the sense of biting convergence) of smooth maps in $W^{1, 1}(\Omega, \, \NN)$ with
non-abelian $\pi_1(\NN)$ has been proven by Pakzad~\cite{Pakzad-NonAbelian}.

\numberwithin{equation}{section}
\numberwithin{definition}{section}
\numberwithin{theorem}{section}
\numberwithin{remark}{section}

\section{Notation and preliminaries}
\label{sect:preliminaries}

\subsection{Flat chains over an abelian coefficient group} 
\label{subsect:flat_chains}

Let~$(\G, \, |\cdot|)$ be a normed abelian group, that is, an abelian group 
(we will use additive notation for the operation on~$\G$) together with
a non-negative function $|\cdot|\colon\G\to [0, \, +\infty)$ that satisfies
\begin{enumerate}[label=(\roman*)]
 \item $|g| = 0$ if and only if~$g=0$
 \item $|-g| = |g|$ for any~$g\in\G$
 \item $|g + h|\leq |g|+|h|$ for any~$g, \, h\in\G$.
\end{enumerate}
Throughout the following, we will assume that the norm~$|\cdot|$ satisfies
\begin{equation} \label{discrete_norm}
 |g| \geq 1 \qquad \textrm{for any } \G\setminus\{0\}.
\end{equation}
In order to fix some notation, and for the convenience of the reader, we 
recall some basic definitions and facts about flat chains with multiplicities in~$\G$.
We follow the approach in~\cite{Whitney-GIT, Fleming, White-Rectifiability},
to which we refer the reader for further details.

For~$n\in\Z$, $1\leq n\leq d$, consider the free $\G$-module generated by
compact, convex, oriented polyhedra of dimension~$n$ {\BBB in the ambient space~$\R^d$.} 
(In other words, we consider 
the set of all formal sums of polyhedra as above, with coefficients in~$\G$;
there is a natural notion of sum which makes this set an abelian group.)
We quotient this module by the equivalent relation~$\sim$, requiring
$-\sigma\sim\sigma^\prime$ if $\sigma^\prime$ and~$\sigma$ only differ for 
the orientation, and $\sigma\sim\sigma_1+\sigma_2$ if $\sigma$ is obtained by gluing
$\sigma_1$, $\sigma_2$ along a common face (with the correct orientation). 
The quotient group is called the group of polyhedral $n$-chain with
coefficients in~$\G$, and is denoted $\P_n(\R^d; \, \G)$.
Every element~$S\in\P_n(\R^d; \, \G)$ can be represented as a finite sum
\begin{equation} \label{polyhedral}
 S = \sum_{i = 1}^p \alpha_i\llbracket \sigma_i\rrbracket,
\end{equation}
where $\alpha_i\in\G$, the $\sigma_i$'s are compact, convex, non-overlapping 
$n$-dimensional polyhedra, and $\llbracket\cdot\rrbracket$
denotes the equivalence class modulo the relation~$\sim$ defined above.

The mass of a polyhedral chain~$S\in\P_n(\R^d; \, \G)$,
presented in the form~\eqref{polyhedral}, is defined by 
$\M(S) := \sum_i|\alpha_i|\H^n(\sigma_i)$.
A linear operator $\partial\colon\P_n(\R^d; \, \G)\to\P_{n-1}(\R^d; \, \G)$,
called the boundary operator,
is defined in such a way that, for a single polyhedron~$\sigma$,
$\partial\llbracket\sigma\rrbracket$ is the sum of the boundary faces of~$\sigma$,
with the orientation induced by $\sigma$ and multiplicity~$1$. 
The boundary operator satisfies $\partial\circ\partial =0$. 
The flat norm of a polyhedral $n$-dimensional chain~$S$ is defined by
\[
 \F(S) :=\inf\left\{\M(P)+\M(Q)\colon P\in\P_{n+1}(\R^d; \, \G), \ 
 Q\in\P_n(\R^d; \, \G), \ S = \partial P + Q\right\} \! .
\]
It can be shown (see e.g. \cite[Section~2]{Fleming})
that $\F$ indeed defines a norm on~$\P_n(\R^d; \, \G)$,
in such a way that the group operation on $\P_n(\R^d; \, \G)$ is Lipschitz continuous.
The completion of $(\P_n(\R^d; \, \G), \, \F)$, as a metric space, will be denoted
$\F_n(\R^d; \, \G)$. It can be given the structure of a $\G$-module,
and it is called the group of flat $n$-chain with coefficients in~$\G$. Moreover,
the mass~$\M$ extends to a $\F$-lower semi-continuous functional
$\F_n(\R^d; \, \G)\to[0, \, +\infty]$, still denoted~$\M$, and it remains true that
\begin{equation} \label{flat-mass}
 \F(S) :=\inf\left\{\M(P)+\M(Q)\colon P\in\F_{n+1}(\R^d; \, \G), \ 
 Q\in\F_n(\R^d; \, \G), \ S = \partial P + Q \right\}
\end{equation}
for any $S\in\F_n(\R^d; \, \G)$ \cite[Theorem~3.1]{Fleming}.
We let~$\M_n(\R^d; \, \G)$ be the set of
flat $n$-chains~$S$ with $\M(S) < +\infty$, and we let
\[
 \N_n(\R^d;\, \G) := \left\{S\in\M_n(\R^d;\, \G)\colon 
 \M(S) + \M(\partial S) <+\infty\right\} \! .
\]
In fact, $\M$ is a norm on $\M_n(\R^d; \, \G)$.

\paragraph{Operations with flat chains.}

Any Lipschitz map~$f\colon\R^d\to\R^N$ induces group homeomorphisms
$f_*\colon\F_n(\R^d; \, \G)\to\F_n(\R^N; \, \G)$, for $0 \leq n\leq d$,
called the push-forward via~$f$. One first defines the push-forward of a
a single polyhedron, $f_*\llbracket\sigma\rrbracket$, by approximating~$f$ with
piecewise-affine maps (see~\cite[p.~297]{Whitney-GIT}). Then, $f_*$ extends to
polyhedral chains by linearity, and to arbitrary chains by approximation
with polyhedral chains. The push-forward commutes with the boundary,
that is $\partial(f_*S) = f_*(\partial S)$. If $S$ is a flat $n$-chain 
and $\lambda$ is a Lipschitz constant for~$f$, then
\begin{equation} \label{pushforward}
 \M(f_*S) \leq\lambda^n \M(S), \qquad 
 \F(f_*S) \leq\max\{\lambda^n, \, \lambda^{n+1}\}\F(S) 
\end{equation}
(see e.g. \cite[Section~5]{Fleming}). A chain of the form~$f_*S$,
where $S$ is polyhedral and~$f$ is Lipschitz (resp., smooth), will be called a 
Lipschitz (resp., smooth) chain. By a remarkable result by Fleming~\cite{Fleming},
later improved by
White~\cite{White-Rectifiability}, if~$\G$ satisfies~\eqref{discrete_norm} then
Lipschitz chains are dense in $\M_n(\R^d; \, \G)$ with respect to the~$\M$-norm,
and in particular $\spt S$ is a rectifiable set for any~$S\in\M_n(\R^d; \, \G)$. 
(However, we will not need this result in our arguments.)

Given a chain~$S$ of finite mass and a Borel set~$A\subseteq\R^d$, one can define
the restriction of~$S$ to~$A$, denoted~$S\mres A$, which roughly speaking 
represents the portion of~$S$ contained in~$A$. Again, this is obtained via
approximation with polyhedral chains (see \cite[Section~4]{Fleming}). Then,
for fixed~$A$, the functional~$S\mapsto\M(S\mres A)$ is $\F$-lower continuous,
while for fixed~$S$, $A\mapsto\M(S\mres A)$ is a Radon measure.

A flat chain~$S$ is said to be supported in a closed set~$K\subseteq\R^d$
if, for any open neighbourhood~$U$ of~$K$,
there exists a sequence of polyhedral chains~$(P_j)_{j\in\N}$ that lie in~$U$
(i.e., every cell of~$P_i$ is contained in~$U$) and $\F$-converges to~$S$.
If~$S$, $R$ are supported in a closed set~$K$, 
then $\partial S$, $S+R$ are also supported in~$K$.
{\BBB The support of a chain~$S$, noted~$\spt S$, is defined 
by Fleming~\cite[Sections~3 and~4]{Fleming} as the smallest closed set~$K$
such that~$S$ is supported in~$K$. Fleming shows that the support of~$S$
exists if either (i) $S$ is supported in a compact set or~(ii)
if $S$ has finite mass. In the latter case, $\spt S$ 
coincides with the support of the measure $A\mapsto\M(S\mres A)$. However,
a more general definition of support can be given~\cite[Section~5]{Adams}, 
\cite[Section~4]{White-Notes} so to show that~$\spt S$ exists for any flat chain~$S$.}

For~$K$ closed set in~$\R^d$, we denote by~$\F_n(K; \, \G)$ 
(resp., $\M_n(K; \, \G)$, $\N_n(K; \, \G)$) the set of chains $S\in\F_n(\R^d; \, \G)$
(resp., $S\in\M_n(\R^d; \, \G)$, $S\in\N_n(\R^d; \, \G)$) that are supported in~$K$.
It follows from the definition of $\spt S$, and from the lower semi-continuity
of the mass, that the sets $\F_n(K; \, \G)$, $\M_n(K; \, \G)$, $\N_n(K; \, \G)$
are closed under $\F$-convergence.

Finally, we recall the following property of~$0$-dimensional flat chains.

\begin{lemma}[{\cite[Theorem~2.1]{White-Rectifiability}}] \label{lemma:augmentation}
 There exists a unique group homomorphism~$\chi\colon\F_0(\R^d; \, \G)\to\G$
 that satisfies the following properties:
 \begin{enumerate}[label=(\roman*)]
  \item $\chi(\sum_{j=1}^q g_j\llbracket x_j\rrbracket) = \sum_{j=1}^q g_j$ for~$g_j\in\G$ and~$x_j\in\R^d$, $j\in\{1, \, \ldots, \, q\}$.
  \item $\chi(\partial R) = 0$ for any~$R\in\F_1(\R^d; \, \G)$.
  \item  $|\chi(S)|\leq\F(S)$ for any~$S\in\F_0(\R^d; \, \G)$.
 \end{enumerate}
\end{lemma}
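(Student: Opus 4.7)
The plan is to define $\chi$ first on the dense subgroup $\P_0(\R^d; \, \G)\subseteq\F_0(\R^d; \, \G)$ by the explicit formula in~(i), show that it is Lipschitz with respect to the flat norm, and then extend by continuity to all of $\F_0(\R^d;\, \G)$.

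First, I set $\chi(\sum_{j=1}^q g_j\llbracket x_j\rrbracket) := \sum_{j=1}^q g_j$ on $\P_0(\R^d;\, \G)$. On $0$-chains the equivalence relation~$\sim$ only identifies $g_1\llbracket x\rrbracket+g_2\llbracket x\rrbracket$ with $(g_1+g_2)\llbracket x\rrbracket$ (there is no nontrivial orientation for points), so $\chi$ is well defined and is a group homomorphism; property~(i) is built in. Next I check that $\chi\circ\partial=0$ on $\P_1(\R^d;\,\G)$: for a single oriented segment $\sigma$ from $a$ to $b$, $\partial\llbracket\sigma\rrbracket=\llbracket b\rrbracket-\llbracket a\rrbracket$, whence $\chi(\alpha\,\partial\llbracket\sigma\rrbracket)=\alpha-\alpha=0$, and the general statement follows by linearity.

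Now I estimate $|\chi(S)|\leq\F(S)$ on $\P_0$. Given any decomposition $S=\partial P+Q$ with $P\in\P_1(\R^d;\,\G)$ and $Q\in\P_0(\R^d;\,\G)$, writing $Q=\sum_k\beta_k\llbracket y_k\rrbracket$ one gets $\chi(S)=\chi(\partial P)+\chi(Q)=\chi(Q)=\sum_k\beta_k$, hence
\[
|\chi(S)|\;\leq\;\sum_k|\beta_k|\;=\;\M(Q)\;\leq\;\M(P)+\M(Q).
\]
Taking the infimum over such decompositions yields $|\chi(S)|\leq\F(S)$. In particular $\chi$ is $1$-Lipschitz on $(\P_0(\R^d;\,\G),\,\F)$, and since $\F_0(\R^d;\,\G)$ is by definition the metric completion of this space, $\chi$ extends uniquely to a $1$-Lipschitz group homomorphism $\chi\colon\F_0(\R^d;\,\G)\to\G$ satisfying~(iii).

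It remains to verify~(ii) on all of $\F_1(\R^d;\,\G)$. The crucial observation is that $\partial\colon\F_1\to\F_0$ is itself $1$-Lipschitz for the flat norm: if $R=\partial P+Q$ with $P\in\F_2$, $Q\in\F_1$, then $\partial R=\partial Q$ can be written as $\partial Q+0$, so $\F(\partial R)\leq\M(Q)\leq\M(P)+\M(Q)$, and passing to the infimum gives $\F(\partial R)\leq\F(R)$. Hence, given $R\in\F_1(\R^d;\,\G)$, picking polyhedral $R_n\to R$ in flat norm we obtain $\partial R_n\to\partial R$ in $\F_0$, and $\chi(\partial R)=\lim_n\chi(\partial R_n)=0$ by the polyhedral case, establishing~(ii). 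Uniqueness is immediate: any homomorphism satisfying~(i) agrees with $\chi$ on $\P_0$, and property~(iii) forces continuity, hence agreement on the completion $\F_0(\R^d;\,\G)$. The only delicate point is the Lipschitz bound, but as shown this reduces to the trivial inequality $|\sum_k\beta_k|\leq\sum_k|\beta_k|$ combined with $\chi\circ\partial=0$ on polyhedral $1$-chains.
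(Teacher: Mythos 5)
Your argument is correct, and it is essentially the standard proof of this fact: the paper itself does not prove the lemma but cites it from White (where the augmentation is constructed in just this way), so there is no "paper proof" to diverge from. Defining $\chi$ on $\P_0(\R^d;\,\G)$ by summing coefficients, noting $\chi\circ\partial=0$ on polyhedral $1$-chains, deducing $|\chi(S)|\leq\M(Q)\leq\M(P)+\M(Q)$ for every polyhedral decomposition $S=\partial P+Q$ and hence $|\chi(S)|\leq\F(S)$, and then extending to the completion $\F_0(\R^d;\,\G)$, is exactly the intended route; your observation that $\partial\colon\F_1\to\F_0$ is $1$-Lipschitz correctly transfers (ii) from polyhedral chains to all flat $1$-chains, and the uniqueness argument via (i) plus the continuity forced by (iii) is fine.

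One point you should make explicit: the extension by uniform continuity from the dense subgroup $\P_0$ requires the target $\G$ to be complete. In this paper that is automatic from the standing assumption~\eqref{discrete_norm} ($|g|\geq 1$ for $g\neq 0$), since then any Cauchy sequence in $\G$ is eventually constant; equivalently, for polyhedral $S_n\to S$ in the flat norm one has $\F(S_n-S_m)<1$ for large $n,m$, so $\chi(S_n)$ is eventually constant and the limit defining $\chi(S)$ exists trivially (this is the same mechanism as in Remark~\ref{remark:augmentation_stability}). Without either completeness of $\G$ or \eqref{discrete_norm}, the limit $\lim_n\chi(S_n)$ need not exist, so the extension step would fail; with it, your proof is complete. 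A cosmetic remark: in the well-definedness discussion, the identification $g_1\llbracket x\rrbracket+g_2\llbracket x\rrbracket=(g_1+g_2)\llbracket x\rrbracket$ already holds in the free $\G$-module, and the quotient relations act trivially on $0$-dimensional generators, so $\chi$ is indeed well defined as you say.
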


The map~$\chi$ is sometimes called the augmentation homomorphism.

\begin{remark} \label{remark:augmentation_stability}
 Lemma~\ref{lemma:augmentation}.(iii) and our assumption~\eqref{discrete_norm}
 imply that $\chi(S_0) = \chi(S_1)$ if the chains~$S_0$, $S_1\in\F_0(\R^d;\,\G)$
 are such that~$\F(S_0 - S_1) < 1$.
\end{remark}

\paragraph{Relative flat chains on an open set.}
\label{subsect:relative_chains}

In view of our applications, we will need to consider flat chains 
defined in an open set~$U\subseteq\R^d$.
A definition of the space~$\F_n(U; \, \G)$ is given in
several places in the literature (see, e.g., \cite{Federer, GiaquintaModicaSoucek-I,
PakzadRiviere}\ldots) but, to the best of the authors' knowledge, 
it is usually required that the elements of~$\F_n(U; \, \G)$
{\BBB are} compactly supported in~$U$, which is not convenient for our purposes.
We discuss here an alternative definition and present some basic results
for the sake of completeness, being aware that these facts might be
well-known by the experts of the field.

Let~$U\subsetneq\R^d$ be a non-empty open set, and let~$K$ be a closed set
that contains~$U$. Recall that we have defined $\F_n(K; \, \G)$ as the set of chains 
in $\F_n(\R^d; \, \G)$ that are supported in~$K$. We now define
\[
 \F_n(U; \, \G) := \F_n(K; \, \G)/\F_n(K\setminus U; \, \G).
\]
$\F_n(K\setminus U; \, \G)$ is a $\G$-submodule of $\F_n(K; \, \G)$
and is closed with respect to the $\F$-norm because~$K\setminus U$ is closed,
therefore $\F_n(U; \, \G)$ is a complete normed $\G$-module, with respect to the quotient norm:
\begin{equation} \label{quotient_norm}
 \F_U(S) := \inf\left\{\F(R)\colon R\in\F_n(\R^d; \, \G), \ 
 \spt(R)\subseteq K, \ \spt(R - S)\subseteq K\setminus U \right\}\! ,
\end{equation}
for $S\in\F_n(K; \, \G)$ --- by abuse of notation, we 
denote by the same symbol the chain~$S$ and its equivalence class
in~$\F_n(U; \, \G)$. The boundary operator~$\partial$
induces a well-defined, continuous operator $\F_n(U; \, \G)\to \F_{n-1}(U; \, \G)$,
still denoted~$\partial$.
We now give an alternative characterisation of the norm~$\F_U$.

\begin{lemma} \label{lemma:relative_flat}
 For any $S\in\F_n(K; \, \G)$, there holds
 \[
   \begin{split}
    \F_U(S) &= \inf\big\{\M(P\mres U) + \M(Q\mres U)\colon 
    P\in\M_{n+1}(\R^d; \, \G), \  Q\in\M_{n}(\R^d; \; \G), \\ 
    &\qquad\qquad\qquad \spt(S - \partial P - Q)\subseteq\R^d\setminus U  \big\}.
   \end{split}
 \]
\end{lemma}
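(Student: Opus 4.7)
The plan is to prove the two inequalities separately; for brevity, I will write $\mathcal{J}(S)$ for the right-hand side of the claimed identity.

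\emph{Direction $\mathcal{J}(S)\leq\F_U(S)$.} This is the easy inequality. Given any chain $R\in\F_n(\R^d;\,\G)$ with $\spt R\subseteq K$ and $\spt(R-S)\subseteq K\setminus U$, and any $\varepsilon>0$, I would use~\eqref{flat-mass} to select $P_\varepsilon\in\M_{n+1}(\R^d;\,\G)$ and $Q_\varepsilon\in\M_n(\R^d;\,\G)$ with $R=\partial P_\varepsilon + Q_\varepsilon$ and $\M(P_\varepsilon)+\M(Q_\varepsilon)\leq\F(R)+\varepsilon$. Because $S-\partial P_\varepsilon-Q_\varepsilon=S-R$ has support in $K\setminus U\subseteq\R^d\setminus U$, the pair $(P_\varepsilon,Q_\varepsilon)$ is admissible for $\mathcal{J}(S)$, so $\mathcal{J}(S)\leq\M(P_\varepsilon\mres U)+\M(Q_\varepsilon\mres U)\leq\F(R)+\varepsilon$. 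Letting $\varepsilon\to 0$ and taking the infimum over~$R$ finishes this direction.

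\emph{Direction $\F_U(S)\leq\mathcal{J}(S)$.} Given an admissible pair $(P,Q)$ for $\mathcal{J}(S)$, I would introduce
\[
\tilde R := \partial(P\mres U) + Q\mres U
\]
as a candidate in the definition of $\F_U(S)$. The mass bound $\F(\tilde R)\leq\M(P\mres U)+\M(Q\mres U)$ follows at once from~\eqref{flat-mass}. The inclusion $\spt\tilde R\subseteq\overline U\subseteq K$ holds because the restriction of a finite-mass chain to the open set~$U$ is supported in~$\overline U$ and the boundary operator does not enlarge supports. The delicate point is the inclusion $\spt(\tilde R-S)\subseteq K\setminus U$. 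Using the finite-mass decomposition $P=P\mres U+P\mres(\R^d\setminus U)$ and the analogous identity for~$Q$, one rearranges
\[
\tilde R - S = -\,\partial\bigl(P\mres(\R^d\setminus U)\bigr) - Q\mres(\R^d\setminus U) - (S-\partial P-Q),
\]
and each summand on the right is supported in the closed set $\R^d\setminus U$ (the last one by the assumption on $(P,Q)$). Combined with $\tilde R-S\in\F_n(K;\,\G)$, this gives $\spt(\tilde R-S)\subseteq K\cap(\R^d\setminus U)=K\setminus U$, so $\tilde R$ is admissible and $\F_U(S)\leq\F(\tilde R)$.

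\emph{Main obstacle.} No slicing or deformation-type arguments are needed; the proof is essentially a matter of assembling definitions. The only points requiring mild care are the additivity $P=P\mres U+P\mres(\R^d\setminus U)$ for finite-mass chains, and the containment $\spt(\partial T)\subseteq\spt T$ for a general flat chain~$T$, both of which follow from the polyhedral-approximation setup recalled in Section~\ref{subsect:flat_chains}.
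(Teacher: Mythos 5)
Your proof is correct and follows essentially the same route as the paper's: the easy inequality is obtained exactly as there, and for the reverse inequality you use the same competitor $\partial(P\mres U)+Q\mres U$ together with the same kind of support bookkeeping. The only inessential difference is that the paper first replaces $P$, $Q$ by their restrictions to the interior of~$K$ so that the correction terms visibly lie in $K\setminus U$, whereas you obtain the containment in~$K$ for free from $\tilde R - S\in\F_n(K;\,\G)$ and then intersect supports, which is legitimate given the existence of supports for arbitrary flat chains recalled in Section~\ref{subsect:flat_chains}.
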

\begin{proof}
 Denote by~$\tilde{\F}_U(S)$ the right-hand side.
 For any~$\varepsilon > 0$, using the definition~\eqref{quotient_norm} of~$\F_U$ and the
 characterisation~\eqref{flat-mass} of the flat norm, we find 
 $P\in\M_{n+1}(\R^d; \, \G)$, $Q\in\M_{n}(\R^d; \, \G)$
 such that $\spt(\partial P + Q)\subseteq K$,
 $\spt(\partial P + Q - S)\subseteq K\setminus U$ and
 $\M(P)+\M(Q)\leq\F_U(S)+\varepsilon$. This shows the inequality
 $\tilde{\F}_U(S)\leq\F_U(S)$.
 
 Before checking the opposite inequality, we remark that,
 for any chain~$T$ of finite mass and any open set~$W\subseteq\R^d$, there holds
 \begin{equation} \label{support}
  \spt(T - T\mres W) \subseteq \R^d\setminus W, \qquad 
  \spt(\partial T - \partial (T\mres W)) \subseteq \R^d\setminus W. 
 \end{equation}
 (The first inclusion holds true because $(T - T\mres W)\mres W = T\mres W - T\mres W = 0$;
 the second one follows from the first, because the boundary of chain
 supported in~$\R^d\setminus W$ is also supported in~$\R^d\setminus W$.)
 Now, we fix~$S\in\F_n(K; \, \G)$,
 $P\in\M_{n+1}(\R^d; \, \G)$ and $Q\in\M_{n}(\R^d; \; \G)$
 such that $\spt(S - \partial P - Q)\subseteq \R^d\setminus U$.
 Let~$K_0$ be the interior of~$K$, and let~$P^\prime := P\mres K_0$,
 $Q^\prime := Q\mres K_0$. Then~$P^\prime$, $Q^\prime$ are supported in~$K$,
 and so is $S - \partial P^\prime - Q^\prime$. Moreover, there holds
 \[
  S - \partial P^\prime - Q^\prime = \underbrace{S - \partial P - Q}
  \ + \ \underbrace{\partial P^\prime - \partial P} \ + \ \underbrace{Q^\prime - Q}
 \]
 and the three terms that are indicated by underbraces are all supported out of~$U$
 (the first one is supported in~$\R^d\setminus U$ by assumption, the second and
 the third ones are supported in $\R^d\setminus K_0\subseteq \R^d\setminus U$
 by~\eqref{support}). Therefore, 
 $\spt(S - \partial P^\prime - Q^\prime)\subseteq K\setminus U$.
 Finally, we have
 \[
  S = \underbrace{\partial (P^\prime\mres U) + Q^\prime\mres U}_{=:R} \ + \
  \underbrace{\partial P^\prime  - \partial(P^\prime\mres U)} \ + \
  \underbrace{Q^\prime - Q^\prime\mres U}\ + \ 
  \underbrace{S - \partial P^\prime - Q^\prime}
 \]
 The chain $R$ is supported in $\overline{U}\subseteq K$, and
 all the terms in the right-hand side but~$R$ are supported in~$K\subseteq U$,
 thanks to~\eqref{support}. Therefore, by the definition~\eqref{quotient_norm}
 of~$\F_U$ and~\eqref{flat-mass}, we deduce that
 $\F_U(S)\leq\F(R)\leq\M(P\mres U) + \M(Q\mres U)$ and hence,
 by arbitrarity of~$P$, $Q$, that $\F_U(S)\leq\tilde{\F}_U(S)$.
\end{proof}

The right-hand side of Lemma~\ref{lemma:relative_flat} do not depend on~$K$.
Therefore, the space~$\F_n(U; \, \G)$
is indeed independent of the choice of~$K$, in the following sense:
for any closed sets $K_1$, $K_2$ with $K_1\supseteq K_2\supseteq U$,
there exists an isometric isomorphism
\begin{equation*} 
 \F_n(K_1; \, \G)/\F_n(K_1\setminus U; \, \G) \to
 \F_n(K_2; \, \G)/\F_n(K_2\setminus U; \, \G).
\end{equation*}
The isomorphism is obtained by considering the map $\M_n(K_1; \, \G) \to
\F_n(K_2; \, \G)/\F_n(K_2\setminus U; \, \G)$ induced by the restriction 
operator $S\mapsto S\mres K_2$, extending it by density
to a map $\F_n(K_1; \, \G)\to\F_n(K_2; \, \G)/\F_n(K_2\setminus U; \, \G)$,
with the help of Lemma~\ref{lemma:relative_flat}, and passing to the quotient. 
The inverse is induced by the inclusion $K_2\hookrightarrow K_1$.
Because of the existence of an isomorphism, it makes sense to omit~$K$ 
in the notation. In the rest of this section, we assume 
that $K = \overline{U}$, but other choices of~$K$ might be convenient.

In a similar fashion, from Lemma~\ref{lemma:relative_flat} we can derive 
the following compatibility property 
with respect to restrictions.
For notational convenience, we set $\F_{\R^d} := \F$.

\begin{lemma} \label{lemma:flat_restriction}
 Let~$U_1$, $U_2$ be non-empty, open sets in~$\R^d$ with~$U_1\subseteq U_2$.
 Then, there exists a continuous map
 \[
  \Psi\colon\F_n(U_2; \, \G)\to\F_n(U_1; \, \G)
 \]
 such that $\Psi(S) = S\mres U_1$ for any $S\in\M_n(\overline{U}_2; \, \G)$.
\end{lemma}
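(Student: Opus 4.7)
The plan is to realise $\Psi$ as the natural quotient map induced by the inclusion of the submodules one quotients out when forming $\F_n(U_i; \G)$. I will work in the model $\F_n(U_i; \G) = \F_n(K; \G)/\F_n(K \setminus U_i; \G)$ explained just above the lemma, taking as common ambient closed set $K := \overline{U}_2$, which contains both $\overline{U}_1$ and $U_1$. The crucial observation is that $U_1 \subseteq U_2$ forces $K \setminus U_2 \subseteq K \setminus U_1$, so $\F_n(K \setminus U_2; \G)$ sits inside $\F_n(K \setminus U_1; \G)$ as a closed $\G$-submodule of $\F_n(K; \G)$.

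Consequently, the identity on $\F_n(K; \G)$ descends to a well-defined $\G$-module homomorphism
\[
\Psi\colon \F_n(K; \G)/\F_n(K \setminus U_2; \G) \to \F_n(K; \G)/\F_n(K \setminus U_1; \G),
\]
that is, $\Psi\colon\F_n(U_2; \G)\to\F_n(U_1; \G)$. From the universal property of quotient norms, $\Psi$ will automatically be $1$-Lipschitz: the infimum defining $\F_{U_1}(\Psi(S))$ ranges over the set $S + \F_n(K \setminus U_1; \G)$, which contains $S + \F_n(K \setminus U_2; \G)$, so $\F_{U_1}(\Psi(S)) \leq \F_{U_2}(S)$. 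Continuity of $\Psi$ follows immediately.

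It remains to check the compatibility $\Psi(S) = S \mres U_1$ for $S \in \M_n(\overline{U}_2; \G)$. Since $S$ has finite mass, the restriction $S \mres U_1$ is well defined and the decomposition $S = S \mres U_1 + S \mres (\R^d \setminus U_1)$ holds in $\M_n(\R^d; \G)$. The Radon measure $A \mapsto \M\bigl((S \mres (\R^d \setminus U_1)) \mres A\bigr)$ is concentrated on $\R^d \setminus U_1$, so by the characterisation of the support of a finite-mass chain recalled just before Lemma~\ref{lemma:augmentation}, the chain $S \mres (\R^d \setminus U_1)$ is supported in $\overline{U}_2 \cap (\R^d \setminus U_1) = K \setminus U_1$. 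Thus $S - S \mres U_1 \in \F_n(K \setminus U_1; \G)$, which says exactly that $\Psi(S) = [S] = [S \mres U_1]$ as elements of $\F_n(U_1; \G)$.

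I do not anticipate any genuine obstacle: the statement is essentially a soft consequence of the $K$-independence of $\F_n(U_i; \G)$ (established above via Lemma~\ref{lemma:relative_flat}) and the universal property of the quotient norm. The one mildly delicate point is keeping the supports of the restrictions under control when passing between the different models of $\F_n(U_i; \G)$; this is handled by the support characterisation for finite-mass chains already quoted in the paper.
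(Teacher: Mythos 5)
Your proof is correct. Note that the paper itself omits the proof of this lemma, indicating only that it can be derived from Lemma~\ref{lemma:relative_flat} ``in a similar fashion'' to the $K$-independence statement; the natural reading of that hint is: define $\Psi$ on finite-mass chains by $S\mapsto S\mres U_1$, use the mass-based characterisation of $\F_{U}$ in Lemma~\ref{lemma:relative_flat} to see that this restriction operator is $1$-Lipschitz from $(\M_n(\overline{U}_2;\,\G),\F_{U_2})$ to $(\F_n(U_1;\,\G),\F_{U_1})$, and extend by density. Your route is softer: you realise both $\F_n(U_2;\,\G)$ and $\F_n(U_1;\,\G)$ as quotients of the same module $\F_n(\overline{U}_2;\,\G)$, observe $\F_n(\overline{U}_2\setminus U_2;\,\G)\subseteq\F_n(\overline{U}_2\setminus U_1;\,\G)$, and let $\Psi$ be the induced quotient map, which is automatically $1$-Lipschitz; the only analytic input is the support argument showing $S-S\mres U_1\in\F_n(\overline{U}_2\setminus U_1;\,\G)$ for finite-mass $S$, which correctly uses the identification of $\spt$ with the support of the mass measure. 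What your approach buys is that no new estimates are needed (the work is hidden in the $K$-independence, itself a consequence of Lemma~\ref{lemma:relative_flat}); what it costs is one extra, essentially trivial, verification you leave implicit: your target is the model of $\F_n(U_1;\,\G)$ with ambient set $\overline{U}_2$, so to match the statement literally you should compose with the isometric isomorphism onto the canonical model with $K=\overline{U}_1$ and note that it sends the class of $S\mres U_1$ to itself (their difference from $S\mres\overline{U}_1$ is supported in $\overline{U}_1\setminus U_1$). With that one-line remark added, the argument is complete.
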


We omit the proof of this lemma.
An analougous compatibility property with respect to restrictions
does \emph{not} hold, in general, for the $\F$-norm. 
(For instance, let $R_j\in\M_2(\R^2; \, \Z)$ be 
the chain carried by the rectangle $[-1/j, \, 1/j]\times[-1, \, 1]$ 
with standard orientation {\BBB and multiplicity~$1$;} then $\partial R_j$ $\F$-converges 
to zero but $\partial R_j\mres (0, \, +\infty)\times\R$ does not.) 
Moreover, if $S$ has infinite mass, the restriction $S\mres U$ might not be
well defined in~$\F_0(\R^d; \, \G)$:
for example, consider the $0$-chain with coeffients in~$\Z/2\Z$
carried by the set $\cup_{j\geq 1} \{(-2^{-j}, \, j), \, 
(2^{-j}, \, j)\}\subseteq\R^2$ and $U = (0, \, +\infty)\times\R$. 
In this case, $S\mres U$ is not well-defined
in $\F_0(\R^2; \, \Z/2\Z)$, even though $\Psi(S)$ is a well-defined
element of~$\F_0(U; \, \Z/2\Z)$ (i.e., there exists a chain
$R\in \F_0(\overline{U}; \, \G)$ such that $(S - R)\mres U = 0$).
However, the following statement holds: if
$S_j$ is a sequence of chians that $\F$-converges to~$S$, and if
$f\colon\R^d\to\R$ is a Lipschitz function, then for a.e.~$t\in\R$
the restrictions $S_j\mres f^{-1}(-\infty, \, t)$, 
$S\mres f^{-1}(-\infty, \, t)$ are well-defined and
$\F(S_j\mres f^{-1}(-\infty, \, t) - S\mres f^{-1}(-\infty, \, t))\to 0$
(see e.g. \cite[Theorem~5.2.3.(2)]{DePauwHardt};
we recall the proof in the following lemma).

\begin{lemma} \label{lemma:restriction}
 Let~$U\subsetneq\R^d$ be a non-empty open set,
 and let~$\rho_0$ be a positive number.
 For~$\rho\in(0, \, \rho_0]$, set~$:= \{x\in U\colon\dist(x, \, \partial U)>\rho\}$.
 Let $(S_j)_{j\in\N}$ be a sequence in~$\F_n(\overline{U}; \, \G)$
 and let~$S\in\F_n(\overline{U}; \, \G)$ be such that 
 $\F_U(S_j - S)\to 0$ as~$j\to+\infty$. Then, for a.e. $\rho\in (0, \, \rho_0]$
 the restrictions $S_j\mres U_\rho$, $S\mres U_\rho$ are well-defined and
 $\F(S_j\mres U_\rho - S\mres U_\rho)\to 0$ as~$j\to+\infty$.
\end{lemma}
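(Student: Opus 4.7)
The plan is to reduce to the differences $T_j := S_j - S$, for which $\F_U(T_j) \to 0$, and to slice suitable finite-mass decompositions of them with the $1$-Lipschitz function $f(x) := \dist(x, \R^d \setminus U)$, which satisfies $\{f > \rho\} = U_\rho$. First, Lemma~\ref{lemma:relative_flat} yields, for each $j$, chains $P_j \in \M_{n+1}(\R^d; \G)$ and $Q_j \in \M_n(\R^d; \G)$ with $\spt(T_j - \partial P_j - Q_j) \subseteq \R^d \setminus U$ and
\[
 \M(P_j \mres U) + \M(Q_j \mres U) \leq \F_U(T_j) + 2^{-j}.
\]

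Next I would invoke the slicing theorem for flat chains with coefficients in an abelian group (cf.\ \cite{Fleming}): for a.e.\ $\rho \in (0, \rho_0]$ and every $j$, the chains $P_j \mres U_\rho$, $Q_j \mres U_\rho$, and $\langle P_j, f, \rho \rangle$ have finite mass and obey the slicing identity $(\partial P_j) \mres U_\rho = \partial(P_j \mres U_\rho) \pm \langle P_j, f, \rho \rangle$. The residual $V_j := T_j - \partial P_j - Q_j$, being supported in $\R^d \setminus U$, lies at distance at least $\rho$ from $U_\rho$, hence can be $\F$-approximated by polyhedral chains disjoint from $\overline{U_\rho}$, forcing $V_j \mres U_\rho = 0$. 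This legitimises
\[
 T_j \mres U_\rho \ = \ \partial(P_j \mres U_\rho) \ \mp \ \langle P_j, f, \rho \rangle \ + \ Q_j \mres U_\rho,
\]
independently of the decomposition by the slicing identity. The same construction defines $S_j \mres U_\rho$ and $S \mres U_\rho$, settling the ``well-defined'' part of the conclusion.

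Reading the flat norm off the displayed formula, integrating over $(0, \rho_0]$, and applying the coarea estimate $\int_0^{\rho_0} \M(\langle P_j, f, \rho \rangle) \, \d\rho \leq \M(P_j \mres U)$ yields
\[
 \int_0^{\rho_0} \F(T_j \mres U_\rho) \, \d\rho \ \leq \ (\rho_0 + 1)\bigl(\F_U(T_j) + 2^{-j}\bigr) \ \longrightarrow \ 0,
\]
so $\F(T_j \mres U_\rho) \to 0$ in $L^1(0, \rho_0)$; extracting a subsequence $j_k$ with $\F_U(T_{j_k}) + 2^{-j_k} \leq 2^{-k}$ and applying Borel--Cantelli to the super-level sets $\{\rho \colon \F(T_{j_k} \mres U_\rho) > 2^{-k/2}\}$ upgrades this to a.e.\ convergence, as claimed.

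The main obstacle lies in the second step: since $V_j$ need not have finite mass, $V_j \mres U_\rho$ cannot be read off from a mass measure, and one must invoke the definition of support for flat chains together with the gap $\dist(\spt V_j, U_\rho) \geq \rho$; likewise, the very definition of $S_j \mres U_\rho$ and $S \mres U_\rho$ via finite-mass decompositions, and its independence from the decomposition, relies crucially on the slicing identity. Once this bookkeeping is in place, the rest is a routine coarea computation.
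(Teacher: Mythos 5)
Your route is structurally the same as the paper's: produce a finite-mass decomposition, commute boundary and restriction via Fleming's slicing theorem, integrate in~$\rho$ against the coarea-type bound, and conclude. The one genuine variation is that you decompose $T_j=S_j-S$ \emph{relative to $U$} via Lemma~\ref{lemma:relative_flat}, while the paper first proves the auxiliary estimate~\eqref{restr0} for the ambient norm $\F$ (for finite-mass $T$, then for general $T$ by density of finite-mass chains) and only afterwards handles $\F_U$-convergence by replacing $S$ with representatives $R_j$ satisfying $\F(S_j-R_j)\to0$ and $\spt(R_j-S)\subseteq\overline U\setminus U$. That ordering is not cosmetic: in the paper's finite-mass step one has $\partial P=T-Q$, hence $\M(\partial P)<\infty$, which is exactly what legitimises the appeal to \cite[Theorem~5.7]{Fleming} and makes the term $(\partial P)\mres U_\rho$ meaningful. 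In your decomposition $\partial P_j=T_j-Q_j-V_j$ may well have infinite mass, so the slicing identity as you state it, and likewise the assertion ``$V_j\mres U_\rho=0$'', involve restrictions that are not yet defined; turning your displayed formula into the \emph{definition} of $T_j\mres U_\rho$, proving its independence of the decomposition, and checking consistency with the restriction the paper uses (defined on finite-mass chains and extended by density) is precisely the content of the ``well-defined'' half of the lemma, essentially the machinery of~\cite{DePauwHardt}. You flag this in your closing paragraph but do not carry it out, whereas the paper's two-step arrangement avoids the issue altogether.

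The more substantive mismatch is the last step. Chebyshev plus Borel--Cantelli along a subsequence $j_k$ gives $\F(T_{j_k}\mres U_\rho)\to0$ for a.e.~$\rho$ \emph{only along that subsequence}, while the lemma claims convergence of the whole sequence; and this cannot be repaired from the $L^1_\rho$ estimate alone. Indeed, take $d=1$, $n=0$, $\G=\Z$, $U=(0,2)$, $\rho_0=\tfrac12$, $S=0$ and $S_j=\llbracket a_j\rrbracket-\llbracket b_j\rrbracket$ with $[a_j,b_j]\subseteq(0,\tfrac12)$ short intervals sweeping repeatedly across $(0,\tfrac12)$ in the manner of the typewriter sequence: then $\F_U(S_j)\leq b_j-a_j\to0$, yet for a.e.\ fixed $\rho$ there are infinitely many $j$ with $a_j<\rho<b_j$, for which $S_j\mres U_\rho=-\llbracket b_j\rrbracket$ has flat norm~$1$ by Lemma~\ref{lemma:augmentation}. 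So your argument proves the correct subsequence version (equivalently, the version under $\sum_j\F_U(S_j-S)<\infty$), which is all the paper actually needs downstream (e.g.\ in Lemmas~\ref{lemma:relative_lsc_mass} and~\ref{lemma:compactness}), but not the statement as literally written. To be fair, the paper's own proof makes the same silent jump from $\int_0^{\rho_0}\F((S_j-R_j)\mres U_\rho)\,\d\rho\to0$ to a.e.\ convergence of the full sequence, so on this point your write-up is the more candid one; still, you should either restate the conclusion along a subsequence or note explicitly that the full-sequence claim does not follow from (and is in fact incompatible with) the integral bound.
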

\begin{proof}
 We first claim that, for any $T\in\F_n(\overline{U}; \, \G)$ and a.e. 
 $\rho\in (0, \, \rho_0]$, the restriction $T\mres U_\rho$ is well-defined
 and there holds
 \begin{equation} \label{restr0}
  \int_0^{\rho_0}\F(T\mres U_\rho) \, \d\rho \leq (1 + \rho_0) \F(T). 
 \end{equation}
 (As mentioned above, this fact is well-known and we include a proof here 
 only for the sake of completeness.) Suppose first that $T$ has finite mass.
 Let~$P\in\M_{n+1}(\R^d; \, \G)$, $Q\in\M_{n}(\R^d; \, \G)$ be such that
 $T = \partial P + Q$. Having assumed that~$T$ has finite mass, it 
 follows that~$\partial P$ has finite mass. We also remark that, for any~$\rho$,
 $U_\rho$ is a sublevel set for the signed distance function from~$\partial U$ 
 (i.e., the function~$f$ defined by $f(x) := -\dist(x, \, \partial U)$ 
 if~$x\in U$, $f(x) := \dist(x, \, \partial U)$ if~$x\notin U$),
 which is $1$-Lipschitz continuous. 
 Then, we can apply~\cite[Theorem~5.7]{Fleming}, 
 and deduce that $B_\rho := \partial(P\mres U_\rho) - (\partial P)\mres U_\rho$
 is well-defined, 
 and there holds
 \begin{equation} \label{restr1}
  \int_0^{+\infty} \M(B_\rho) \, \d\rho \leq \M(P\mres U).
 \end{equation}
 Moreover, {\BBB for a.e.~$\rho\in (0, \, \rho_0]$} there holds
 \[
  T\mres U_\rho =  \partial (P\mres U_\rho) + Q\mres U_\rho - B_\rho,
 \]
 which yields
 \[
  \F(T\mres U_\rho) \leq \M(P) + \M(Q) + \M(B_\rho).
 \]
 By integrating this inequality with respect to~$\rho\in(0, \, \rho_0]$,
 using~\eqref{restr1}, and taking the infimum with respect to all possible
 choices of~$P$ and~$Q$, we deduce that~\eqref{restr0} holds, in case~$T$
 has finite mass. If $T$ has infinite mass, we recover the same result
 using that finite-mass chains are dense in~$\F_n(\overline{U}; \, \G)$.
 
 Now, let $(S_j)_{j\in\N}$ be a sequence in~$\F_n(\overline{U}; \, \G)$
 that $\F_U$-converges to~$S$. By possibly modifying the $S_j$'s out of~$U$,
 we can assume that there exists a sequence $(R_j)_{j\in\N}$ 
 in~$\F_n(\overline{U}; \, \G)$ such that $\F(S_j - R_j)\to 0$ as $j\to+\infty$
 and $R_j - S$ is supported out of~$U$, for each~$j$. By~\eqref{restr0},
 $\F((S_j - R_j)\mres U_\rho)\to 0$ as $j\to +\infty$, for a.e.~$\rho$.
 But $R_j\mres U_\rho = S\mres U_\rho$,
 so the lemma is proved.
\end{proof}

\begin{remark}\label{remark:compact_support}
 Assume that~$H\subseteq U$ is a Borel set such that 
 $\dist(H, \, \partial U) >0$ and let $T\in\M_n(\overline{U}; \, \G)$
 be a finite-mass chain. By taking~$\rho_0 := \dist(H, \, \partial U)$,
 noting that $T\mres U_\rho = (T\mres H) + (T \mres U_\rho\setminus H)$
 for any $\rho\in (0, \, \rho_0]$, and selecting a suitable $\rho$,
 from~\eqref{restr0} we deduce that
 \[
  \F(T\mres H) \leq \left(1 + \dist^{-1}(H, \, \partial U) \right)
  \F(T) + \M(T\mres (U\setminus H)).
 \]
 By taking the infimum over all finite-mass $T$'s in a given
 equivalence class of~$\F_n(U; \, \G)$, we also deduce
 \begin{equation} \label{flat-average}
  \F(T\mres H) \leq \left(1 + \dist^{-1}(H, \, \partial U) \right)
  \F_U(T) + \M(T\mres (U\setminus H)).
 \end{equation}
 {\BBB In particular, if~$T$ is supported in~$H$ then
 \begin{equation} \label{CO2}
  \F(T) \leq \left(1 + \dist^{-1}(H, \, \partial U) \right) \F_U(T).
 \end{equation}}
\end{remark}

\begin{lemma} \label{lemma:relative_lsc_mass}
 Let $(S_j)_{j\in\N}$ be a sequence in~$\M_{n}(\overline{U}; \, \G)$, and
 let~$S\in\F_n(\overline{U}, \, \G)$ be such that $\F_U(S_j - S)\to 0$
 as~$j\to+\infty$. Then, $S\mres U$ is well-defined and 
 there holds $\M(S\mres U)\leq\liminf_{j\to+\infty}\M(S_j\mres U)$.
\end{lemma}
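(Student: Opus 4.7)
The plan is to reduce to the case $M := \liminf_j \M(S_j \mres U) < +\infty$ (otherwise the stated inequality is vacuous) and, by passing to a subsequence, to arrange $\M(S_j \mres U) \to M$. Fix any $\rho_0 > 0$ and set $U_\rho := \{x \in U : \dist(x, \partial U) > \rho\}$ for $\rho \in (0, \rho_0]$. By Lemma~\ref{lemma:restriction}, for almost every such $\rho$ the restrictions $S \mres U_\rho$ and $S_j \mres U_\rho$ are well-defined in $\F_n(\R^d; \G)$, and $\F(S_j \mres U_\rho - S \mres U_\rho) \to 0$ as $j \to +\infty$.

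On each such $U_\rho$ I would invoke the $\F$-lower semi-continuity of the mass recalled in Section~\ref{subsect:flat_chains}: since $\M(S_j \mres U_\rho) \leq \M(S_j \mres U)$, it gives
\[
 \M(S \mres U_\rho) \leq \liminf_{j \to +\infty} \M(S_j \mres U_\rho) \leq M
\]
for a.e. $\rho \in (0, \rho_0]$. To assemble these local restrictions into a globally defined chain $S \mres U$, I would then pick a decreasing sequence $\rho_k \downarrow 0$ of ``good'' parameters. The chains $S \mres U_{\rho_k}$ are supported in the nested sets $U_{\rho_k}$, which exhaust $U$, and the Radon-measure property of $A \mapsto \M(T \mres A)$ yields the compatibility $(S \mres U_{\rho_{k'}}) \mres U_{\rho_k} = S \mres U_{\rho_k}$ for $k' \geq k$. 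Hence the numerical sequence $\M(S \mres U_{\rho_k})$ is non-decreasing and bounded above by $M$, and
\[
 \M(S \mres U_{\rho_{k'}} - S \mres U_{\rho_k}) = \M(S \mres U_{\rho_{k'}}) - \M(S \mres U_{\rho_k})
\]
tends to zero as $k, k' \to +\infty$, so $(S \mres U_{\rho_k})_k$ is Cauchy in the $\M$-norm. Its $\M$-limit, a finite-mass chain supported in $\overline{U}$, is what I would take as $S \mres U$, and by construction $\M(S \mres U) \leq M$.

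The main delicate point I foresee is the rigorous justification of the compatibility $(S \mres U_{\rho_{k'}}) \mres U_{\rho_k} = S \mres U_{\rho_k}$, which would otherwise require tracing the restriction operation through the construction in the proof of Lemma~\ref{lemma:restriction}. A clean workaround is to $\F_U$-approximate $S$ by finite-mass chains (recalling that polyhedral chains are dense in $\F_n$), for which the identity is transparent from the Radon-measure property, and then pass to the limit using the $\F$-continuity of the restriction on $U_\rho$ provided by Lemma~\ref{lemma:restriction}.
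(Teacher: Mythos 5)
Your argument is correct and follows essentially the same route as the paper's proof: restrict to the good sublevel sets~$U_\rho$ via Lemma~\ref{lemma:restriction}, apply the $\F$-lower semicontinuity of the mass there (using $\M(S_j\mres U_\rho)\leq\M(S_j\mres U)$), and then let~$\rho\to 0$. The monotone/$\M$-Cauchy assembly of~$S\mres U$ that you spell out, including the compatibility of the nested restrictions, is precisely what the paper compresses into ``the lemma follows by letting $\rho\to 0$'', so your additional care is welcome but not a different approach.
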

\begin{proof}
 Let~$U_\rho$ be as in Lemma~\ref{lemma:restriction}.
 By Lemma~\ref{lemma:restriction} and the $\F$-lower semi-continuity
 of the mass, we deduce that $\M(S\mres U_\rho)\leq
 \liminf_{j\to+\infty}\M(S_j\mres U_\rho)$ for a.e. $\rho > 0$.
 The lemma follows by letting $\rho\to 0$.
\end{proof}

Finally, we establish a compactness result with respect to the norm~$\F_U$.
We first remark that, as a consequence of our assumption~\eqref{discrete_norm},
the following property holds:
\begin{equation} \label{G_compact}
 \textrm{for any } \Lambda > 0, \textrm{ the set } 
 \left\{g\in\G\colon |g|\leq\Lambda \right\} \textrm{ is compact.} 
\end{equation}

\begin{lemma} \label{lemma:compactness}
 Assume that the coefficient group~$\G$ satisfies~\eqref{G_compact}.
 Let~$U\subseteq\R^d$ be a non-empty, bounded, open set, and let
 $(S_j)_{j\in\N}$ be a sequence in~$\M_{n}(\overline{U}; \, \G)$ such that
 \begin{equation} \label{hp-compactness}
 \sup_{j\in\N} \big(\M(S_j\mres U) + \M(\partial S_j\mres U)\big) <+\infty.
 \end{equation}
 Then, there exists a subsequence (still denoted~$S_j$) and a
 chain~$S\in\M_n(\overline{U}, \, \G)$ such that $\F_U(S_j - S)\to 0$ as~$j\to+\infty$.
\end{lemma}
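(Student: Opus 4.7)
The plan is to build $S$ via an interior exhaustion $U_{\rho_m}\uparrow U$, apply Fleming's classical compactness theorem on each fixed compact $\overline{U_{\rho_m}}\subset U$, and then bridge the gap between $\F$-convergence on the interior and $\F_U$-convergence on $U$ via a cone estimate near $\partial U$.

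First, I would fix $\rho_0>0$ so that $U_{\rho_0}:=\{x\in U\colon\dist(x,\partial U)>\rho_0\}$ is non-empty, and apply a slicing argument (in the spirit of the proof of Lemma~\ref{lemma:restriction}, based on \cite[Thm.~5.7]{Fleming}) to each finite-mass chain $S_j\mres U$. This yields, for a.e. $\rho\in(0,\rho_0)$, a well-defined restriction $S_j\mres U_\rho$ with $\M(S_j\mres U_\rho)\leq C$ and a decomposition $\partial(S_j\mres U_\rho)=(\partial S_j)\mres U_\rho+B_{j,\rho}$ satisfying $\M((\partial S_j)\mres U_\rho)\leq C$ and $\int_0^{\rho_0}\M(B_{j,\rho})\,\d\rho\leq C$. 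Combining Fatou's lemma with a Chebyshev-type selection, I can extract a decreasing sequence $\rho_m\downarrow 0$ and a subsequence of $(S_j)$ along which $\M(B_{j,\rho_m})$ is bounded by a constant independent of both $j$ and $m$.

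Next, for each $m$, the chains $S_j\mres U_{\rho_m}$ lie in the fixed compact set $\overline{U_{\rho_m}}$ with uniformly bounded mass and boundary mass, so Fleming's classical compactness theorem (valid under~\eqref{G_compact}) yields a further subsequence $\F$-converging to some $T^{(m)}\in\M_n(\overline{U_{\rho_m}};\G)$. A standard diagonal extraction produces a single subsequence, still denoted $(S_j)$, along which $S_j\mres U_{\rho_m}\to T^{(m)}$ in $\F$ for every~$m$. The compatibility $T^{(m+1)}\mres U_{\rho_m}=T^{(m)}$ follows from Lemma~\ref{lemma:restriction}; since $(\M(T^{(m)}))_m$ is bounded and non-decreasing, $(T^{(m)})_m$ is $\M$-Cauchy, and its limit $S\in\M_n(\overline{U};\G)$ satisfies $S\mres U_{\rho_m}=T^{(m)}$ and $\M(S)\leq C$.

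Finally, I would establish $\F_U(S_j-S)\to 0$ through the triangle inequality
\[
\F_U(S_j-S)\leq\F_U\bigl(S_j\mres(\overline{U}\setminus U_{\rho_m})\bigr)+\F\bigl(S_j\mres U_{\rho_m}-T^{(m)}\bigr)+\F\bigl(T^{(m)}-S\bigr).
\]
The last two terms vanish as $j\to\infty$ (fixed~$m$) and as $m\to\infty$, respectively. For the first term, the part of $S_j\mres(\overline{U}\setminus U_{\rho_m})$ supported on~$\partial U$ is annihilated by the quotient, and the remaining chain $S_j\mres(U\setminus U_{\rho_m})$ lies within distance $\rho_m$ of $\partial U$. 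A cone construction, pushing it out of~$U$ via a Lipschitz retraction onto $\R^d\setminus U$, combined with Lemma~\ref{lemma:relative_flat}, bounds $\F_U\bigl(S_j\mres(U\setminus U_{\rho_m})\bigr)\leq C\rho_m\bigl(\M(S_j\mres U)+\M(\partial S_j\mres U)+\M(B_{j,\rho_m})\bigr)\leq C'\rho_m$, uniformly in~$j$. Sending $j\to\infty$ then $m\to\infty$ concludes. The hardest step is this cone estimate, which depends on the existence of a Lipschitz retraction from a tubular neighborhood of~$\partial U$ onto~$\R^d\setminus U$; this is standard under mild regularity of~$\partial U$ (e.g.\ Lipschitz, as in the paper's applications), and in full generality one can use a regularised nearest-point projection constructed via a Whitney decomposition of $\R^d\setminus\partial U$.
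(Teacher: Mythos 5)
Your first three steps (interior exhaustion, slicing via Fleming's Theorem~5.7 to control the boundaries of the restrictions, Fleming's compactness on each fixed $\overline{U}_{\rho_m}$, diagonal extraction, compatibility of the limits $T^{(m)}$ and passage to the $\M$-limit $S$) reproduce the paper's argument. The gap is in the last step, where you treat the boundary layer. Your cone estimate needs a Lipschitz map, defined on the $\rho_m$-neighbourhood of $\partial U$, with image in $\R^d\setminus U$, displacement $O(\rho_m)$ and Lipschitz constant bounded uniformly in $\rho_m$. The lemma assumes nothing about $\partial U$ beyond $U$ being a bounded open set, and for a general open set no such map exists: already for a closed complement that is totally disconnected with accumulating components (think of $\{0\}\cup\{1/n\}_{n\ge1}$ in one variable, fattened to a subset of $\R^d$), any continuous map of the $\rho$-neighbourhood into the complement must collapse whole merged components to points and is forced to move points by an amount much larger than $\rho$; a regularised nearest-point projection built from a Whitney decomposition does not cure this, because the obstruction is topological, not a smoothness issue. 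So the step on which you say the whole proof hinges is not available at the stated level of generality.

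There is a second, independent problem: the claim that Fatou plus a Chebyshev selection yields $\rho_m\downarrow0$ and a subsequence with $\M(B_{j,\rho_m})$ bounded \emph{independently of $m$}. Fatou only gives that $\rho\mapsto\liminf_j\M(B_{j,\rho})$ is in $L^1(0,\rho_0)$, and an $L^1$ function may blow up as $\rho\to0$ at every point (e.g.\ like $\rho^{-1}|\log\rho|^{-2}$), so no $m$-independent constant can be extracted in general; consequently your final bound $C\rho_m\bigl(\dots+\M(B_{j,\rho_m})\bigr)\le C'\rho_m$ does not follow. (This particular defect is repairable: since $\liminf_{\rho\to0}\rho\liminf_j\M(B_{j,\rho})=0$ for an $L^1$ function, one can choose $\rho_m$ so that $\rho_m\sup_j\M(B_{j,\rho_m})\to0$ along the subsequence.) The paper avoids both difficulties altogether: it extracts a weak$^\star$ limit $\mu$ of the mass measures $A\mapsto\M(S_j\mres A)$, chooses the radii $\rho_k$ so that $\mu(\partial U_{\rho_k})=0$, and then estimates the layer term simply by its mass, $\F_U\bigl(S_j\mres(U\setminus U_{\rho_{k}})\bigr)\le\M\bigl(S_j\mres(U\setminus U_{\rho_{k}})\bigr)$, which is asymptotically controlled by $\mu(U\setminus U_{\rho_{k}})$ and hence small for $k$ large; this uses only \eqref{hp-compactness} and requires no retraction and no regularity of $\partial U$. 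You should replace your cone construction by an argument of this type.
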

\begin{proof}
 As in Lemma~\ref{lemma:restriction}, let $U_\rho := \{x\in U\colon \dist(x, \, U)> \rho\}$, for $\rho\in (0, \, \rho_0]$ and~$\rho_0>0$ fixed.
 Consider the sequence of measures in~$C_0(U)^\prime$
 defined by $A\mapsto \M(S_j\mres A)$, for $A\subseteq U$ Borel set.
 This sequence is bounded due to~\eqref{hp-compactness}, 
 and therefore it converges weakly$^\star$ (up to a subsequence)
 to a limit measure $\mu\in C_0(U)^\prime$. The boundedness of~$\mu$ implies
 that $\mu(\partial U_\rho) = 0$ for a.e.~$\rho$.
 
 Setting $B_{\rho, j} := \partial(S_j\mres U_\rho) - (\partial S_j)\mres U_\rho$,
 by \cite[Theorem 5.7]{Fleming} and Fatou lemma we deduce that
 \[
   \int_0^{\rho_0} \liminf_{j\to +\infty} \M(B_{\rho, j}) \, \d\rho \leq 
   \liminf_{j\to+\infty} \M(S_j\mres U) 
   \stackrel{\eqref{hp-compactness}}{<} + \infty.
 \]
 Therefore, for a.e.~$\rho\in(0, \, \rho_0]$ there exists a subsequence 
 (still denoted~$S_j$) such that
 \[
  \sup_{j\in\N} \left(\M(S_j\mres U_\rho) + \M(\partial(S_j\mres U_\rho))\right)
  \leq \sup_{j\in\N} \left(\M(S_j\mres U_\rho) +
     \M((\partial S_j)\mres U_\rho) + \M(B_{\rho, j}) \right) <+\infty.
 \]
 Due to the assumption~\eqref{G_compact}, and the boundedness of~$U$,
 for a.e.~$\rho\in(0, \, \rho_0]$ we can apply the compactness
 result~\cite[Corollary~7.5]{Fleming}. With the help of a diagonal argument, 
 we find a sequence $\rho_k\searrow 0$
 and a subsequence of~$j$ such that, for any~$k\in\N$,
 the following properties hold:
 \begin{gather}
  \mu(\partial U_{\rho_k}) = 0, \label{mu-bd} \\
  (S_j\mres U_{\rho_k})_{j\in\N}  \ \F\textrm{-converges to a limit, say }
     R_k\in\M_n(\overline{U}_{\rho_{k+1}}; \, \G). \label{R_k}
 \end{gather}
 The uniqueness of the limit implies that $R_k\mres U_{\rho_{h}} = R_h$, 
 for any $h < k$. The sequence $(R_k)_{k\in\N}$ is $\M$-convergent,
 because~\eqref{R_k} and the $\F$-lower semi-continuity of the mass imply
 \[
  \sum_{k\in\N} \M(R_{k + 1} - R_k) \leq \liminf_{j\to +\infty}
  \sum_{k\in\N} \M(S_j\mres (U_{\rho_{k+1}}\setminus U_{\rho_k})) 
  = \liminf_{j\to +\infty} \M(S_j\mres U) 
  \stackrel{\eqref{hp-compactness}}{<} +\infty.
 \]
 Let~$S\in\M_n(\overline{U}; \, \G)$ be the $\M$-limit of the $R_k$'s;
 it remains to check that $\F_U(S_j - S) \to 0$. For a fixed $\varepsilon > 0$,
 let $k_*\in\N$ be such that
 $\mu(U \setminus U_{\rho_{k_*}}) \leq \varepsilon/2$. Due to~\eqref{mu-bd},
 we have $\M(S_j \mres (U\setminus U_{\rho_{k_*}}))\to\mu(U\setminus U_{\rho_*})$
 and hence
 \[
  \begin{split}
  \limsup_{j\to+\infty}\F_U(S_j - S) &\leq 
  \limsup_{j\to+\infty} \left\{ \F_U((S_j - S)\mres U_{\rho_{k_*}}) +
     \M((S_j - S) \mres (U\setminus U_{\rho_{k_*}})) \right\} \\
  &\stackrel{\eqref{R_k}}{\leq} 2\mu(U\setminus U_{\rho_{k_*}}) = \varepsilon,
  \end{split}
 \]
 where we have used again the $\F$-lower semi-continuity of the mass
 and the fact that $\F_U \leq \F$. Since $\varepsilon > 0$ is arbitrary, 
 the lemma follows.
\end{proof}

\paragraph{Intersection index for flat chains.}
\label{subsect:intersection}

For~$y\in\R^d$, we denote by $\tau_y\colon x\in\R^d\mapsto x+y$ the translation map 
associated with~$y$. Given chains $S\in\F_n(\R^d; \, \G)$ and~$R\in\N_{m}(\R^d; \, \Z)$, 
with~$n+m\geq d$, for a.e.~$y\in\R^d$ we would like to define the 
intersection~$S\cap \tau_{y,*}R$ as an element of~$\F_{n+m-d}(\R^d;\, \G)$.
This construction has been described in \cite[Section~5]{White-Notes}
but, for the convenience of the reader, we briefly recall it here. 

Suppose first that~$S$, $R$ are single polyhedra. By Thom transversality theorem, 
for a.e.~$y$ the polyhedra $S$ and~$\tau_{y,*}R$ intersect transversely, so
the set~$\sigma:= \llbracket S\rrbracket\cap\llbracket\tau_{y,*}R\rrbracket$
is a finite union of polyhedra of dimension~$n+m-d$.
We orient~$\llbracket S\rrbracket\cap\llbracket\tau_{y,*}R\rrbracket$
according to the convention of~\cite[Section~3]{White-Rectifiability},
i.e., the orientation is chosen in such a way that the following holds:
if $(u_1, \, \ldots,\, u_{n+m-d})$ is an oriented basis for the $(n+m-d)$-plane spanned
by~$\llbracket S\rrbracket\cap\llbracket\tau_{y,*}R\rrbracket$,
$(u_1, \, \ldots,\, u_{n+m-d}, \, v_1, \, \ldots, \, v_{d-m})$ is an 
oriented basis for the $n$-plane spanned by~$\llbracket S\rrbracket$, and
$(u_1, \, \ldots,\, u_{n+m-d}, \, w_1, \, \ldots, \, w_{d-m})$ is an 
oriented basis for the $m$-plane spanned by~$\llbracket \tau_{y,*}R\rrbracket$, then
$(u_1, \, \ldots,\, u_{n+m-d}, \, v_1, \, \ldots, \, v_{d-m}, \,  w_1, \, \ldots, \, w_{d-n})$
is a positively oriented basis for~$\R^d$. Having chosen the orientation, we can
regard the intersection $S\cap\tau_{y,*}R$ as a polyhedral $(n+m-d)$ chain,
in the obvious way. This definition now extend by linearity to the case~$S$,
$R$ are polyhedral. Now, it can be shown
\cite[Theorem~5.3]{White-Notes} that
\begin{equation} \label{cap_chain}
 \int_{\R^d} \F(S\cap \tau_{y,*}R) \,\d y \leq \F(S)(\M(R)+\M(\partial R)).
\end{equation}
As a consequence, we can extend~$\cap$ by continuity so that, for
any~$S\in\F_n(\R^d; \, \G)$ and~$R\in\N_{m}(\R^d; \, \Z)$, $S\cap\tau_{y,*}R$
{\BBB is a well-defined element of~$\F_{n+m-d}(\R^d; \, \G)$ for a.e.~$y\in\R^m$.}
Moreover, for a sequence~$(S_j)_{j\in\N}$ that converges to~$S$ 
in the flat norm and a.e.~$y\in\R^d$,
the chain $S_j\cap\tau_{y,*}R$ flat-converges to~$S\cap\tau_{y,*}R$.

For the convenience of the reader, we sketch the proof of~\eqref{cap_chain}.

\begin{proof}[Proof of~\eqref{cap_chain}]
 Suppose that~$S$, $R$ are single polyhedra. Then, by applying the coarea formula,
 we deduce that
 \begin{equation*}
  \int_{\R^d} \M(S\cap \tau_{y,*}R) \,\d y \leq \M(S)\M(R).
 \end{equation*}
 This inequality can be extended by linearity to the case~$S$, $R$ are polyhedral chains.
 Now, it can be checked that, when $A$, $B$ are polyhedral chains that intersect
 transversely, and with the orientation convention described above, there holds
 \begin{equation} \label{cap_boundary}
  \partial(A\cap B) = (-1)^{d-m}\partial A\cap B + A\cap\partial B. 
 \end{equation}
 Therefore, writing $S = P + \partial Q$, we have
 \[
  S\cap\tau_{y,*}R = P\cap\tau_{y,*}R + (-1)^{d-m} Q\cap\tau_{y,*}(\partial R)
  + (-1)^{d-m+1}\partial (Q\cap\tau_{y,*}R).
 \]
 By taking the flat norm and integrating with respect to~$y\in\R^d$, we see that
 the left-hand side of~\eqref{cap_chain} is bounded by 
 $\M(P)\M(R)+ \M(Q)\M(\partial R) + \M(Q)\M(R)$, and hence \eqref{cap_chain} follows.
\end{proof}

In the rest of the paper, we will be interested in the case~$S$, $R$ are of 
complementary dimensions, that is, $\dim(S) + \dim(R) = d$. In this case, $S\cap\tau_{y,*}R$
is a $0$-chain, and we can consider the quantity~$\chi(S\cap\tau_{y,*}R)\in\G$,
where~$\chi$ is the augmentation homomorphism given by Lemma~\ref{lemma:augmentation}.

\begin{lemma} \label{lemma:stability_chi}
 Suppose that~\eqref{discrete_norm} is satisfied. Let $S\in\F_n(\R^d; \, \G)$,
 $R\in\N_{d-n}(\R^d; \, \Z)$ be chains such that
 \begin{equation} \label{hp_intersection}
  \spt(\partial S)\cap\spt(R) = \spt(S)\cap\spt(\partial R)  = \emptyset.
 \end{equation}
 Then, there exists~$\delta = \delta(S, \, R) > 0$ such that,
 for a.e.~$y_1$, $y_2\in B^d_{\delta}$, there holds
 \[
  \chi(S\cap \tau_{y_1,*}R) = \chi(S\cap \tau_{y_2,*}R).
 \]
\end{lemma}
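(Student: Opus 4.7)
The plan is to produce, for a.e. pair $(y_1, y_2) \in B^d_\delta \times B^d_\delta$, an explicit flat $1$-chain whose boundary equals $S\cap\tau_{y_2,*}R - S\cap\tau_{y_1,*}R$ modulo terms that vanish by support separation; since the augmentation $\chi$ annihilates boundaries of $1$-chains (Lemma~\ref{lemma:augmentation}.(ii)), this will yield the desired equality. To build this chain, I first fix $\delta>0$ so small that the closed $2\delta$-neighbourhood of $\spt R$ is disjoint from $\spt \partial S$ and the closed $2\delta$-neighbourhood of $\spt \partial R$ is disjoint from $\spt S$; such a $\delta$ exists thanks to~\eqref{hp_intersection}, up to a mild compactness assumption on the relevant supports (automatic in the intended applications, where $R$ will have compact support). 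For $y_1, y_2 \in B^d_\delta$ I introduce the affine homotopy $F\colon [0,1]\times\R^d \to \R^d$, $F(t, x) := x + (1-t)y_1 + ty_2$, and set
\[
W_{y_1,y_2} := F_*\bigl(\llbracket [0,1]\rrbracket \times R\bigr) \in \N_{d-n+1}(\R^d;\, \Z).
\]
A direct computation using the Leibniz rule for product chains together with $F(0,\cdot)=\tau_{y_1}$ and $F(1,\cdot)=\tau_{y_2}$ gives
\[
\partial W_{y_1,y_2} = \tau_{y_2,*} R - \tau_{y_1,*} R - F_*\bigl(\llbracket [0,1]\rrbracket \times \partial R\bigr),
\]
with $\spt W_{y_1,y_2}$ contained in the $\delta$-neighbourhood of $\spt R$ and $\spt F_*(\llbracket [0,1]\rrbracket \times \partial R)$ in the $\delta$-neighbourhood of $\spt \partial R$.

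Next, for a.e. pair $(y_1, y_2) \in B^d_\delta\times B^d_\delta$, a Fubini-type refinement of~\eqref{cap_chain} ensures that $S\cap W_{y_1,y_2}$, $\partial S \cap W_{y_1,y_2}$ and $S\cap F_*(\llbracket [0,1]\rrbracket \times \partial R)$ are all well-defined flat chains, and the boundary formula~\eqref{cap_boundary} yields
\[
\partial(S\cap W_{y_1,y_2}) = (-1)^{n-1}\partial S\cap W_{y_1,y_2} + S\cap\tau_{y_2,*}R - S\cap\tau_{y_1,*}R - S\cap F_*\bigl(\llbracket [0,1]\rrbracket\times\partial R\bigr).
\]
By the choice of $\delta$, both $\partial S \cap W_{y_1,y_2}$ and $S\cap F_*(\llbracket [0,1]\rrbracket\times\partial R)$ have factors with disjoint supports and hence vanish as flat chains. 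Applying $\chi$ and using $\chi\circ\partial = 0$ on $\F_1(\R^d;\,\G)$ (Lemma~\ref{lemma:augmentation}.(ii)), I obtain $\chi(S\cap\tau_{y_1,*}R) = \chi(S\cap\tau_{y_2,*}R)$ for a.e. $(y_1,y_2) \in B^d_\delta \times B^d_\delta$, which by Fubini is equivalent to the desired statement for a.e. $y_1, y_2\in B^d_\delta$ separately.

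The main delicate point is justifying the Leibniz rule~\eqref{cap_boundary} for the intersection of $S$ with the specific, non-averaged chain $W_{y_1,y_2}$: the excerpt records~\eqref{cap_boundary} only for transverse polyhedra, so I would extend it by approximating $S$ and $R$ by polyhedral chains in generic position, verifying the identity there, and passing to the limit in the flat norm using the Fubini structure in $(y_1, y_2)$ together with the continuity of the intersection operator under flat convergence noted just after~\eqref{cap_chain}. The auxiliary fact that flat intersections with disjoint supports vanish follows from the same polyhedral approximation scheme.
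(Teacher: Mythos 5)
Your argument is correct in substance, but it runs on a genuinely different mechanism than the paper's proof. The paper first treats a single pair of transverse polyhedra: under~\eqref{hp_intersection} no intersection point is created or destroyed under small translations, so $S\cap(\tau_{y_2,*}R-\tau_{y_1,*}R)$ is the boundary of segments of small total length, hence has flat norm~$<1$ for $|y_1|,|y_2|$ small, and the discreteness assumption~\eqref{discrete_norm} together with Lemma~\ref{lemma:augmentation}(iii) (Remark~\ref{remark:augmentation_stability}) forces the two augmentations to coincide; the general case then follows by linearity and a density argument based on~\eqref{cap_chain}. You instead exhibit the difference as an exact boundary of the swept chain $W_{y_1,y_2}=F_*(\llbracket[0,1]\rrbracket\times R)$, kill the lateral terms by the support separation, and conclude from $\chi\circ\partial=0$ alone. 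What this buys is independence from~\eqref{discrete_norm} and from any smallness of $|y_2-y_1|$ beyond keeping the sweep away from $\spt\partial S$; what it costs is exactly the point you flag: making sense of $S\cap W_{y_1,y_2}$ and of~\eqref{cap_boundary} beyond transverse polyhedra. Your Fubini device does work and is cleanest if you note $W_{y_1,y_2}=\tau_{y_1,*}W_{0,y_2-y_1}$ with $W_{0,w}\in\N_{d-n+1}(\R^d;\,\Z)$, so that~\eqref{cap_chain} applies verbatim with $W_{0,w}$ in place of~$R$ for each fixed~$w$; in the limit only $S$ should be approximated in the flat norm (a curved normal chain cannot be approximated by polyhedral chains in the mass norm, so the Leibniz rule against the fixed homotopy chain should be taken from the slicing theory of~\cite{White-Notes} rather than obtained by approximating~$R$). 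Finally, your choice of~$\delta$ needs a positive distance between $\spt R$ and $\spt\partial S$ (and between $\spt\partial R$ and $\spt S$), not mere disjointness; you acknowledge this, and it is harmless in the paper's applications, where $R$ is supported in the compact set~$\overline{\Omega}$, and equally implicit in the paper's own polyhedral step, but it should be stated as a hypothesis or deduced from compactness of $\spt R\cup\spt\partial R$.
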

\begin{proof}
 Suppose first that~$S$, $R$ are polyhedra of complementary
 dimensions that satisfy~\eqref{hp_intersection}. Take~$y_1$,
 $y_2$ such that $S$ intersects transversely~$\tau_{y_1,*}R$
 and~$\tau_{y_2,*}R$ and $y_2 - y_1$ does not belong to the linear
 subspace spanned by~$R$. If $|y_1|$, $|y_2|$ are small enough, then the
 $0$-chain $S\cap(\tau_{y_2,*}R - \tau_{y_1,*}R)$ is (either~$0$ or)
 the boundary of a segment whose length tends to zero as~$|y_2 - y_1|\to 0$,
 for fixed~$S$, $R$. (The assumption~\eqref{hp_intersection} is essential here.) 
 Thus, $\F(S\cap\tau_{y_2,*}R - S\cap\tau_{y_1,*}R)\to 0$ as
 $|y_1|$ and~$|y_2|$ simoultaneously tend to~$0$, and hence
 \eqref{discrete_norm}, together with Lemma~\ref{lemma:augmentation}, implies that
 $\chi(S\cap\tau_{y_1,*}R) = \chi(S\cap\tau_{y_2, *}R)$ for~$|y_1|$, $|y_2|$
 small enough. By linearity and a density argument, 
 using the stability of~$\cap$ and~$\chi$ with respect to the flat convergence 
 (Equation~\eqref{cap_chain} and Remark~\ref{remark:augmentation_stability} respectively),
 the lemma follows.
\end{proof}

By Lemma~\ref{lemma:stability_chi}, the function~$y\in\R^d\mapsto 
\chi(S\cap\tau_{y,*}R)\in \G$ is equal a.e. to a constant, in a neighbourhood of~$0$.
We call such constant \emph{the intersection product} of~$S$ and~$R$,
and we denote it by~$\I(S, \, R)$. Note that~$\I(S, \, R)$ is not well-defined
if the condition~\eqref{hp_intersection} does not hold.

Let~$U\subseteq\R^d$ is a non-empty, open set. Let $S$, 
$R$ satisfy~\eqref{hp_intersection} with~$\spt R\subseteq U$, and
let~$S^\prime\in\F_{n}(\R^d; \, \G)$ be such that
$\spt(S - S^\prime)\subseteq\R^d\setminus U$. By approximating $S$, $S^\prime$, $R$
with polyhedral chains $S_j$, $S^\prime_j$, $R_j$ such that 
$\spt R_j\csubset U\setminus\spt(S_j - S^\prime_j)$, it can be checked that
$\I(S, \, R) = \I(S^\prime, \, R)$. Therefore, the intersection index 
$\I(S, \, R)$ is well-defined when~$S\in\F_n(U; \, \G)$, provided that~$R$ satisfies 
$\spt(R)\subseteq U$ in addition to~\eqref{hp_intersection}.

\begin{lemma} \label{lemma:intersection_product}
 The intersection product satisfies the following properties.
 \begin{enumerate}[label=(\roman*)]
  \item \label{intersection:support} $\I(S, \, R) = 0$ if~$\spt(S)\cap\spt(R) = \emptyset$.
  
  \item \label{intersection:bilinear} $\I$ is bilinear: $\I(S_1 + S_2, \, \, R) = \I(S_1, \, R) + \I(S_2, \, R)$ 
  and $\I(S, \, \, R_1 + R_2) = \I(S, \, R_1) + \I(S, \, R_2)$, as soon as all the
  terms are well-defined.
  
  \item \label{intersection:stability} $\I$ is stable with respect to $\F_U$-convergence: 
  if~$U\subseteq\R^d$ is a non-empty open set, 
  $(S_j)_{j\in\N}$ is a sequence in~$\F_n(U; \, \G)$ 
  that $\F_U$-converges to~$S$, and if~$R\in\N_{d-n}(\R^d; \, \Z)$ satisfies
  \[
   \spt(R)\subseteq U, \qquad \spt(\partial S_j)\cap\spt(R) 
   = \spt(S_j)\cap\spt(\partial R) = \emptyset \quad \textrm{for any } j\in\N,
  \]
  then~$\I(S, \, R) = \I(S_j, \, R)$ for any~$j$ large enough.
  
  \item \label{intersection:homology} $\I$ is stable with respect to homology:
  for any~$S\in\F_n(\R^d; \, \G)$ and~$R\in\N_{d-n+1}(\R^d; \, \Z)$ such that
  $\spt(\partial S)\cap\spt(\partial R) = \emptyset$, there holds
  $\I(S, \, \partial R) = (-1)^n\I(\partial S, \, R)$. In particular, if
  $\spt(\partial S)\cap\spt(R)= \emptyset$ then~$\I(S, \, \partial R) =0$.
 \end{enumerate}
\end{lemma}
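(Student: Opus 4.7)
The plan is to derive each property by first checking the analogous statement for polyhedra in transverse position, then passing to general flat chains via the continuity of~$\cap$ encoded in~\eqref{cap_chain}, and finally applying the augmentation homomorphism~$\chi$ from Lemma~\ref{lemma:augmentation}. Part~\ref{intersection:support} should be essentially tautological: if $\spt S\cap\spt R=\emptyset$, then $\spt S\cap\spt(\tau_{y,*}R)=\emptyset$ for~$y$ in a small neighbourhood of~$0$, so $S\cap\tau_{y,*}R=0$ and $\I(S,R)=0$ by definition. Part~\ref{intersection:bilinear} will follow from the bilinearity of~$\cap$ (which holds on polyhedra and is preserved under flat approximation) combined with the additivity of~$\chi$, provided both pairs satisfy~\eqref{hp_intersection} simultaneously.

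For~\ref{intersection:stability}, I would first lift the $\F_U$-convergence to honest flat convergence in~$\R^d$: by~\eqref{quotient_norm} one can choose representatives $\tilde S_j,\tilde S\in\F_n(\R^d;\,\G)$ with $\tilde S_j-\tilde S$ supported in $\R^d\setminus U$ and $\F(\tilde S_j-\tilde S)\to 0$. Since $\spt R\subseteq U$, the hypothesis~\eqref{hp_intersection} imposed on the equivalence classes $S_j,S$ in $\F_n(U;\,\G)$ lifts to the pairs $(\tilde S_j,R)$ and $(\tilde S,R)$ in~$\R^d$, so that $\I(S_j,R)=\chi(\tilde S_j\cap\tau_{y,*}R)$ and $\I(S,R)=\chi(\tilde S\cap\tau_{y,*}R)$ for a.e.\ small~$y$. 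Then~\eqref{cap_chain} gives $\F(\tilde S_j\cap\tau_{y,*}R-\tilde S\cap\tau_{y,*}R)\to 0$ for a.e.~$y$, and Remark~\ref{remark:augmentation_stability}, combined with the standing assumption~\eqref{discrete_norm}, forces the two augmentations to coincide once~$j$ is large.

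For~\ref{intersection:homology}, I will invoke the boundary formula~\eqref{cap_boundary}, which with $\dim S=n$ and $\dim R=d-n+1$ reads
\[
 \partial(S\cap\tau_{y,*}R)=(-1)^{n-1}\,\partial S\cap\tau_{y,*}R+S\cap\tau_{y,*}(\partial R)
\]
on polyhedral chains in transverse position. Applying~$\chi$ annihilates the left-hand side by Lemma~\ref{lemma:augmentation}.(ii), yielding $\I(S,\partial R)=(-1)^n\I(\partial S,R)$ in the polyhedral case; the general case follows by density together with the stability established in~\ref{intersection:stability}. The final assertion, that $\I(S,\partial R)=0$ whenever $\spt(\partial S)\cap\spt R=\emptyset$, is then an immediate consequence of~\ref{intersection:support} applied to $\I(\partial S,R)$, after noting that $\spt(\partial R)\subseteq\spt R$ ensures~\eqref{hp_intersection} for the pair $(S,\partial R)$ as well.

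The step I expect to be most delicate is~\ref{intersection:stability}: the disjointness conditions~\eqref{hp_intersection} are phrased in terms of equivalence classes in~$\F_n(U;\,\G)$, while $\cap$ and~$\chi$ act on representatives in $\F_n(\R^d;\,\G)$. Choosing representatives that simultaneously $\F$-converge, agree outside~$U$, and avoid creating spurious intersections with $\spt R$ (or $\spt\partial R$) outside~$U$ will require careful use of the hypothesis $\spt R\subseteq U$, which is precisely what makes the intersection index well-posed on the quotient, as argued right before the lemma.
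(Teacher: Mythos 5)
Your proposal is correct and takes essentially the same route as the paper: parts (i)--(iii) are obtained from the polyhedral case together with \eqref{cap_chain}, Lemma~\ref{lemma:augmentation}, Lemma~\ref{lemma:stability_chi} and the discreteness assumption \eqref{discrete_norm}, while (iv) comes from the boundary formula \eqref{cap_boundary} after applying $\chi$, exactly as in the paper's argument. The paper merely records this more tersely (declaring (i)--(iii) straightforward and applying $\chi$ to the chain-level identity extended to general chains), so your additional care in lifting $\F_U$-convergence to genuine flat convergence in (iii), using $\spt R\subseteq U$, is an elaboration of the same proof rather than a different approach.
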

\begin{proof}
 Properties (i), (ii) and~(iii) follow in a straighforward way from~\eqref{cap_chain} and
 Lemma~\ref{lemma:augmentation}, \ref{lemma:stability_chi}.
 For~\ref{intersection:homology} we remark that, due to~\eqref{cap_boundary}, there holds
 \[
  (-1)^{n-1}\partial S\cap\tau_{y,*}R + S\cap\tau_{y,*}(\partial R) =
  \partial(S\cap\tau_{y,*} R).
 \]
 By taking~$\chi$ on both sides, and applying Lemma~\ref{lemma:augmentation}.(ii) 
 and~\ref{intersection:bilinear}, we obtain~\ref{intersection:homology}.
\end{proof}

\subsection{The group~$\pi_{k-1}(\NN)$}
\label{subsect:group_norm}

If the condition~\eqref{H} is satisfied, in particular if~$\pi_j(\NN) = 0$ for any
integer~$0 \leq j\leq k-2$ (and~$\pi_1(\NN)$ is abelian in case~$k=2$), then the action 
of~$\pi_1(\NN)$ over~$\pi_{k-1}(\NN)$ is trivial. 
Therefore, we can and we shall identify the \emph{free} homotopy classes of
continuous maps~$\SS^{k -1}\to\NN$ 
with the elements of the homotopy group~$\pi_{k-1}(\NN)$. Moreover, we have an isomorphism
\begin{equation*} 
 \pi_{k-1}(\NN) \simeq H_{k-1}(\NN),
\end{equation*}
due to Hurewicz theorem (see e.g.~\cite[Theorem~4.37 p.~371]{Hatcher}).
The group~$H_{k-1}(\NN)$ is finitely generated because~$\NN$ can be given the structure of a
\emph{finite} CW complex, hence $\pi_{k-1}(\NN)$ is finitely generated. The choice of a finite
generating set~$\{\gamma_j\}_{j=1}^q$ for~$\pi_{k-1}(\NN)$
induces a group norm on~$\pi_{k-1}(\NN)$, in the following way:
for~$a\in\pi_{k-1}(\NN)$, $|a|$ is the smallest length
of a sum of~$\gamma_j$'s representing~$a$, that is
\begin{equation} \label{group_norm}
 |a| :=\inf \left\{\sum_{j = 1}^q |d_j|\colon (d_j)_{j=1}^q\in\Z^q, \ 
 a = \sum_{j=1}^q {d_j}\gamma_j \right\}\! .
\end{equation}
It can be easily checked that the right-hand side does define a group norm.
This norm is integer-valued, so~$\pi_{k-1}(\NN)$ is a discrete topological space,
and the condition~\eqref{group_norm} is satisfied. Throughout the paper, 
we will consider flat chains with multiplicities in~$\G=\pi_{k-1}(\NN)$.

\subsection{Smooth complexes and the retraction over~$\NN$}
\label{subsect:RR}

A compact set~$\X\subseteq\R^m$ will be called a $n$-dimensional smooth (resp., Lipschitz) complex 
if and only if there exists a diffeomorphism (resp., a bilipschitz map),
defined on a neighbourhood of~$\X$, that takes~$\X$ onto a \emph{finite} $n$-dimensional 
simplicial complex~$\widehat{\X}\subseteq\R^{m}$.
For~$j\in\N$ with~$j\leq n$, we define the~$j$-skeleton~$\X^j$ of~$\X$ 
as the union of all the cells of dimension~$\leq j$.

We recall an important topological fact, upon which our construction is based.

\begin{lemma}[{\cite[Lemma~6.1]{HardtLin-Minimizing}}] \label{lemma:X}
 Suppose that~\eqref{H} is satisfied. Then, there
 exist a compact~$(m-k)$-dimensional smooth complex~$\X\subseteq\R^m$ and a locally smooth
 retraction $\RR\colon\R^m\setminus\X\to\NN$ such that
 \[
  |\nabla\RR(y)| \leq \frac{C}{\dist(y, \, \X)}
 \]
 for any~$y\in\R^m\setminus\X$ and some constant~$C=C(\NN, \, m, \, \X)>0$,
 and~$\nabla\RR$ has full rank on a neighbourhood of~$\NN$.
\end{lemma}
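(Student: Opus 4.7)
The plan is a classical obstruction-theoretic construction. First I would fix a smooth tubular neighborhood $U$ of $\NN$ in $\R^m$ together with the smooth nearest-point retraction $\pi_\NN \colon U \to \NN$, for which $\nabla\pi_\NN$ has full rank on $U$. Then I would take a smooth triangulation $\T$ of $\R^m$ of mesh so small that every closed simplex meeting $\NN$ lies in $U$, and let $\T^*$ denote its dual cell decomposition. The candidate set is $\X := (\T^*)^{(m-k)}$, the $(m-k)$-skeleton of $\T^*$; by duality, $\X$ is disjoint from the $(k-1)$-skeleton $\T^{(k-1)}$ of $\T$, and it is a compact smooth $(m-k)$-dimensional complex as required.

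The next step is to construct a smooth map $\varphi \colon \T^{(k-1)} \to \NN$ by induction on skeleta. Start with $\varphi(v) := \pi_\NN(v)$ at each vertex $v \in U$ and $\varphi(v) := *$ at vertices outside $U$, for some chosen basepoint $* \in \NN$. Given $\varphi$ on $\T^{(j-1)}$ for some $1 \leq j \leq k-1$, extending it over a $j$-simplex $\sigma^j$ is obstructed by a class in $\pi_{j-1}(\NN)$, which vanishes under hypothesis~\eqref{H}; I would choose the extension smooth on each simplex, and arrange that on every simplex contained in $U$ it coincides with $\pi_\NN$ (running the induction \emph{relative} to the data already present on the subcomplex covering $\NN$, which is permitted because the relative obstructions still lie in the same vanishing groups).

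Finally, I would extend $\varphi$ to $\R^m \setminus \X$ through a piecewise-radial retraction $r \colon \R^m \setminus \X \to \T^{(k-1)}$: on each open dual cell $\tau \in \T^*$ of positive dimension, project radially away from $\tau \cap \X$ onto $\partial\tau \setminus \X$, and iterate downward until landing in $\T^{(k-1)}$. This $r$ is Lipschitz, piecewise smooth, and constant along the rays emanating from $\X$, with $|\nabla r(y)| \leq C/\dist(y, \X)$. Setting $\RR := \varphi \circ r$ and applying the chain rule gives the stated gradient bound; since $\RR = \pi_\NN$ on a neighborhood of $\NN$ by construction, $\nabla\RR$ has full rank there. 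A final smoothing across the lower-dimensional strata of $\T$ (leaving $\RR$ unchanged on a neighborhood of $\NN$ and outside a small tube around $\X$) yields the locally smooth retraction.

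The subtlest point, which I expect to be the main obstacle, is the relative inductive extension: ensuring simultaneously that $\varphi$ is smooth across adjacent simplices \emph{and} coincides with $\pi_\NN$ on a full neighborhood of $\NN$. This forces the mesh of $\T$ to be adapted to $U$ and requires running the obstruction-theoretic extension relative to the data prescribed on the subcomplex covering $\NN$, which is precisely where hypothesis~\eqref{H} enters, via the vanishing of $\pi_{j-1}(\NN)$ for $j \leq k-1$. The gradient bound $|\nabla\RR(y)| \leq C/\dist(y,\X)$ then follows almost automatically from the radial nature of $r$, because the map is constant in the radial direction from $\X$ while the angular derivative scales like $1/\dist(\cdot,\X)$.
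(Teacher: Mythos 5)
There is a genuine gap at the final step, where you set $\RR:=\varphi\circ r$. The map $r$ you describe retracts \emph{all} of $\R^m\setminus\X$ onto the $(k-1)$-skeleton, including the points of $\NN$ itself: a generic $y\in\NN$ lies in the interior of a top-dimensional cell, so $r(y)$ is pushed onto the $(k-1)$-skeleton and $\varphi(r(y))=\pi_{\NN}(r(y))$, which is in general different from $y$. Hence $\RR_{|\NN}\neq\operatorname{id}_{\NN}$, so $\RR$ is not a retraction; likewise the claim ``$\RR=\pi_{\NN}$ on a neighbourhood of $\NN$ by construction'' is false for this composite (arranging $\varphi=\pi_{\NN}$ on the simplices inside $U$ does not survive precomposition with $r$), and the full-rank statement fails too, since $\nabla\RR=\nabla\varphi\,\nabla r$ has rank at most $k-1$ wherever it exists. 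No smoothing ``leaving $\RR$ unchanged near $\NN$'' can repair this, because the map you have built is simply not the identity on $\NN$. The correct construction (this is what \cite[Lemma~6.1]{HardtLin-Minimizing} and \cite[Proposition~2.1]{BousquetPonceVanSchaftingen} do, and the paper's own proof is essentially a citation of these, with the remark that starting from a smooth triangulation of $\SS^m=\R^m\cup\{\infty\}$ and regularising yields the smooth version) extends the map \emph{upward} through the skeleta \emph{relative to a neighbourhood of $\NN$}: one keeps $\pi_{\NN}$ on a neighbourhood $V$ of $\NN$, uses \eqref{H} to extend over the $j$-cells not meeting $V$ for $j\le k-1$, and then extends over each higher-dimensional cell by coning/radial projection from its dual centre only on cells away from $\NN$, so that the singular set $\X$ is created away from $\NN$ and the map is literally $\pi_{\NN}$ near $\NN$. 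Your global ``retract first to the $(k-1)$-skeleton, then map by $\varphi$'' shortcut is precisely what loses the retraction property.

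A second, more minor but real, problem is compactness of $\X$: you triangulate all of $\R^m$ with small mesh, so the dual $(m-k)$-skeleton is unbounded, whereas the lemma requires $\X$ compact. This is why the paper's remark triangulates $\SS^m$ (with $\infty$ as a vertex, so the dual skeleton misses $\infty$), or equivalently one works inside a large ball containing $\NN$ and sends the exterior to a basepoint; one must also then argue that the dual cells far from $\NN$ can be discarded from $\X$ because the map on their boundaries is null-homotopic. The gradient bound $|\nabla\RR|\le C/\dist(\cdot,\X)$ and the smoothing across lower-dimensional strata are fine as sketched once the construction is organised as above.
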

\begin{proof}
 This is exactly the statement of~\cite[Lemma~6.1]{HardtLin-Minimizing}, except that in~\cite{HardtLin-Minimizing},
 the set~$\X$ is required to be a Lipschitz complex and~$\RR$ is required to be a Lipschitz map.
 However, the same argument can be used to produce a smooth pair~$(\X, \, \RR)$ with the same properties 
 (one starts with a smooth triangulation of~$\SS^m = \R^m\cup\{\infty\}$, in place of a Lipschitz triangulation;
 the smoothness of~$\RR$ can be achieved by a standard regularisation argument).
 Another, self-contained, proof of this fact is given in \cite[Proposition~2.1]{BousquetPonceVanSchaftingen}.
\end{proof}

Notice that $\RR_{y}\colon z\in\NN\mapsto\RR\circ(y-z)$, for $|y|$ small enough,
defines a smooth family of maps $\NN\to\NN$ such that~$\RR_0 =\Id_{\NN}$. Therefore,
the implicit function theorem implies that $\RR_y$ has a smooth
inverse~$\RR_y^{-1}\colon\NN\to\NN$ for $|y|$ sufficiently small. 

\subsection{Manifold-valued Sobolev maps}
\label{subsect:manifold-Sobolev}

Given a bounded, smooth open set~$U\subseteq\R^d$ and a number~$1 \leq p < +\infty$,
we let $H^{1,p}(U, \, \NN)$ denote the \emph{strong}~$W^{1,p}$-closure
of~$C^\infty(\overline{U}, \, \NN)$. We denote by $H^{1,p}_{\mathrm{loc}}(U, \, \NN)$ 
the set of maps~$u\in W^{1, p}(U, \, \NN)$ such that, for any point~$x\in U$, 
there exists a ball~$B_r(x)\csubset U$ such that $u_{|B_r(x)}\in H^{1,p}(B_r(x), \, \NN)$.
Clearly, we have the chain of inclusions
\[
 H^{1,p}(U, \, \NN) \subseteq 
 H^{1,p}_{\mathrm{loc}}(U, \, \NN) \subseteq W^{1,p}(U, \, \NN)
\]
and a well-known result by Bethuel~\cite[Theorem~1]{Bethuel-Density} implies that the equality 
$H^{1,p}_{\mathrm{loc}} = W^{1,p}$ holds
if and only if~$p\geq d$ or~$\pi_{\lfloor p\rfloor}(\NN) = 0$. The equality~$H^{1,p} = W^{1,p}$
has been characterised by Hang and Lin~\cite[Theorem~1.3]{HangLin}
in terms of topological properties of~$U$ and~$\NN$. In particular, we have

\begin{lemma} \label{lemma:H1,p_loc}
 Suppose that~$U\subseteq\R^d$ is a smooth, bounded domain that has the same homotopy type
 of a smooth $(k-1)$-complex, and let~$p \geq k - 1$.
 Then, $H^{1,p}(U, \, \NN) = H^{1, p}_{\mathrm{loc}}(U, \, \NN)$.
\end{lemma}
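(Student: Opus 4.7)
The inclusion $H^{1,p}(U, \NN) \subseteq H^{1,p}_{\mathrm{loc}}(U, \NN)$ is immediate from the definitions, so the task is to prove the reverse inclusion. Given $u \in H^{1,p}_{\mathrm{loc}}(U, \NN)$ and $\varepsilon > 0$, my plan is to construct a smooth global $\NN$-valued approximation of $u$ by combining a local patching argument with the projection trick encoded in Lemma~\ref{lemma:X}.

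I would first cover $\overline U$ by a finite family of open balls $B_1, \ldots, B_N$, each compactly contained in a slight thickening $\widetilde U \supseteq \overline U$ to which $u$ extends as an $H^{1,p}_{\mathrm{loc}}$-map (using boundary reflection in smooth charts, made possible by the regularity of $\partial U$). The $B_i$'s should be chosen small enough that each $u|_{B_i}$ admits smooth $\NN$-valued approximations, and in such a way that the nerve of the cover has the homotopy type of a $(k-1)$-complex; this latter condition can be arranged since $U$ itself is homotopy equivalent to a $(k-1)$-complex. I pick smooth maps $v_i \in C^\infty(\overline{B_i}, \NN)$ with $\|u - v_i\|_{W^{1,p}(B_i)}$ arbitrarily small, and form the smooth $\R^m$-valued combination $w := \sum_i \phi_i v_i$ via a smooth partition of unity $\{\phi_i\}$ subordinate to $\{B_i\}$. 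Then $w$ is $W^{1,p}$-close to $u$ but in general not $\NN$-valued.

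To return to $\NN$, I apply $\RR$ from Lemma~\ref{lemma:X} to a small translate $w - y$. By Thom transversality and coarea-type estimates (of the same flavour as those used in Section~\ref{sect:Stop} to build $\S$), for almost every small $y$ the exceptional set $\Sigma_y := (w - y)^{-1}(\X)$ is a smooth $(d-k)$-submanifold, the map $\RR \circ (w - y)$ is smooth and $\NN$-valued off $\Sigma_y$, and the $W^{1,p}$-error $\|u - \RR \circ (w - y)\|_{W^{1,p}(U \setminus \Sigma_y)}$ can be made arbitrarily small. It remains to remove the singular set $\Sigma_y$ by a controlled perturbation: the flat chain $\S_y(u) \in \F_{d-k}(\overline U; \pi_{k-1}(\NN))$ vanishes when restricted to each $B_i$ (since $u|_{B_i}$ is smoothly approximable and $\S$ is continuous by Theorem~\ref{th:Stop}), so it is supported on the nerve of the cover. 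Because this nerve is a $(k-1)$-complex and $\NN$ is $(k-2)$-connected, $\S_y(u)$ is null-homologous in $\F_{d-k}(\overline U; \pi_{k-1}(\NN))$, and a Pakzad--Rivière-style removal of singularities~\cite{PakzadRiviere} applied to $\RR \circ (w - y)$ produces the sought global smooth approximation of $u$.

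The most delicate step is the last one, namely making the removal-of-singularities argument quantitative so that the $W^{1,p}$-cost is controlled by $\varepsilon$. This is precisely where the hypothesis $p \geq k - 1$ enters: through the coarea formula one can select the translation $y$ so that the mass of the flat filling of $\S_y(u)$, and hence the $W^{1,p}$-error of the final perturbation, is bounded linearly by the smallness of the local approximation errors $\|u - v_i\|_{W^{1,p}(B_i)}$.
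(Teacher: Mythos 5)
There is a genuine gap, and it sits exactly at the step you flag as ``the most delicate one''. Your argument reduces the lemma to: (a) the topological singular set of $u$ vanishes (or is null-homologous), and (b) a Pakzad--Rivi\`ere removal of singularities then yields a global smooth approximation. But the vanishing of $\S_y(u)$ only encodes the \emph{local} obstruction to approximability, and on a domain that is not a ball this is not sufficient: the counterexamples of Hang and Lin~\cite{HangLin} (recalled in the paper right after Theorem~\ref{th:PR}) show that a map with trivial singular set may still fail to be a strong limit of smooth $\NN$-valued maps, because of a \emph{global} obstruction. The removal-of-singularities result of~\cite{PakzadRiviere} that you invoke is proved for $\Omega=B^d$ and cannot be applied verbatim here. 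The hypothesis that $U$ has the homotopy type of a $(k-1)$-complex and $p\geq k-1$ is precisely what kills the global obstruction, and in your proposal it never does any real work: the sentence ``$\S_y(u)$ vanishes on each $B_i$, so it is supported on the nerve of the cover'' is not meaningful (the nerve is an abstract complex, not a subset of $U$; and if the chain vanishes on every element of an open cover it is simply zero by locality), and ``null-homologous because the nerve is a $(k-1)$-complex and $\NN$ is $(k-2)$-connected'' is not a valid deduction nor, even if true, the statement one needs. The paper's proof instead goes through the obstruction-theoretic machinery of Hang--Lin: approximate $u$ by a map $\tilde u$ continuous off a dual $(d-\lfloor p\rfloor-1)$-skeleton, use the \emph{local} approximability of $u$ (via Fubini/Sobolev or Poincar\'e-averaging on small spheres) to show that $\tilde u$ restricted to the $\lfloor p\rfloor$-skeleton extends continuously over the $(\lfloor p\rfloor+1)$-skeleton, and then apply \cite[Lemma~2.2]{HangLin} --- this is where the homotopy-type hypothesis on $U$ and $p\geq k-1$ enter --- to get a continuous extension to all of $U$, whence $\tilde u\in H^{1,p}$ by \cite[Theorem~6.2]{HangLin}.

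A secondary issue is the range of $p$. Your construction relies on the projection trick through $\RR$ of Lemma~\ref{lemma:X} and coarea-type averaging in $y$; these require $|\nabla\RR|^p\lesssim\dist(\cdot,\X)^{-p}$ to be locally integrable, which holds only for $p<k$ since $\X$ has codimension $k$. The lemma, however, is stated for all $p\geq k-1$, including $p\geq k$ and $p\geq d$; the paper handles $p\geq d$ by a Schoen--Uhlenbeck-type argument and $k-1\leq p<d$ by the Hang--Lin route, neither of which uses the reprojection. So even with the global-obstruction step repaired, your approach would only cover $k-1\leq p<k$.
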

\begin{proof}
 If~$p\geq d$ then, arguing as in \cite[Proposition~p.~267]{SchoenUhlenbeck2}, 
 one sees that $H^{1,p}(U, \, \NN) = H^{1, p}_{\mathrm{loc}}(U, \, \NN) = W^{1,p}(U, \, \NN)$,
 so we can assume that~$k - 1\leq p < d$. Let~$u\in H^{1,p}_{\mathrm{loc}}(U, \, \NN)$ 
 and~$\varepsilon>0$ be given.
 By reflection across~$\partial\Omega$, we can extend~$u$ 
 to a map in~$W^{1,p}(U^\prime, \, \NN)$, where~$U^\prime\supset\!\supset U$ 
 is a slightly larger domain that retracts onto~$\overline{U}$ 
 (see e.g. \cite[Lemma~8.1 and~Remark 8.2]{ABO2});
 thus, we can apply the methods of~\cite{HangLin}, even though~$U$ has a boundary.
 
 Thanks to~\cite[Theorem~6.1]{HangLin} (or \cite[Theorem~2]{Bethuel-Density}),
 we find a smooth cell decomposition~$M$ of~$U^\prime$, a dual 
 $d-\lfloor p \rfloor - 1$-skeleton~$L^{d-\lfloor p \rfloor - 1}$
 and a map~$\tilde{u}\in W^{1,p}(U^\prime, \, \NN)$ that is continuous 
 on~$U^\prime\setminus L^{d-\lfloor p \rfloor - 1}$ and satisfies
 $\|u - \tilde{u}\|_{W^{1,p}}\leq\varepsilon$. It suffices to show
 that~$\tilde{u}_{|M^{\lfloor p\rfloor}}$ can be extended to a continuous map~$U^\prime\to\NN$,
 as~\cite[Theorem~6.2]{HangLin} yields then~$\tilde{u}\in H^{1,p}(U, \, \NN)$ and,
 by arbitarity of~$\varepsilon$, $u\in H^{1,p}(U, \, \NN)$.
 
 For each cell~$Q\in M^{\lfloor p \rfloor+1}$, denote 
 by~$Q^\prime\in L^{d-\lfloor p \rfloor - 1}$ the dual cell and let~$x$
 such that~$Q\cap Q^\prime = \{x\}$. Since~$u\in H^{1,p}_{\mathrm{loc}}(U, \, \NN)$,
 there exist~$0 < \rho < \dist(x, \, \partial Q)$ a sequence of smooth maps
 $u_j\colon B_\rho(x)\to\NN$ that converges to~$u$ in~$W^{1, p}$.
 In case~$p\notin\mathbb{Z}$, Fubini theorem and Sobolev embeddings imply
 that, for a.e.~$r\in(0, \, \rho)$, $u_j\to u$ uniformly on~$Q\cap\partial B_r(x)$,
 therefore the homotopy class of~$u_{|Q\cap\partial B_r(x)}$ is trivial.
 In case~$p\in\mathbb{Z}$, one can approximate~$u_j$ with the continuous functions
 \[
  u_j^{\delta}(y) := \fint_{Q \cap B_r(x)\cap B_\delta(y)} u_j(z)\,\d z
  \qquad \textrm{for } y\in Q\cap B_r(x).
 \]
 By the Poincar\'e inequality, we deduce that $\sup_{j, y}\dist(u^\delta_j(y), \, \NN)\to 0$ 
 as~$\delta\to 0$ and~$u^\delta_j \to u^\delta$ uniformly on~$Q\cap B_r(x)$
 as~$j\to+\infty$, so the same conclusion follows. (Details of the argument can be found 
 in~\cite{BN1} and~\cite[Lemma~4.4]{HangLin}.) 
 By similar arguments we also obtain that, if~$\varepsilon$ is small enough,
 then the homotopy class of~$\tilde{u}_{|Q\cap\partial B_r(x)}$ is trivial,
 hence the homotopy class of~$\tilde{u}_{|\partial Q}$ is trivial 
 and~$\tilde{u}_{M^{\lfloor p \rfloor}}$ has a continuous extension
 $M^{\lfloor p \rfloor + 1}\to\NN$. Finally, by applying~\cite[Lemma~2.2]{HangLin}
 and reminding that~$U$ is homotopy equivalent to a $(k-1)$-complex and that~$p\geq k-1$,
 we conclude that~$\tilde{u}_{|M^{\lfloor p \rfloor}}$ has a continuous extension~$U\to\NN$.
\end{proof}

\paragraph*{Push-forward of a chain by a Sobolev map and homology classes.}

Let~$S\in\M_{k-1}(U; \, \Z)$ be an integral chain
with~$\partial S = 0$, and let~$u\in H^{1,k-1}(U, \, \NN)$.
We aim at defining the homology class of the push-forward chain~$u_{*}(S)$.
To this end, we pick a sequence~$(u_n)_{n\in\N}$ in~$(C^\infty\cap W^{1,k-1})(U, \, \NN)$ 
that converges to~$u$ in~$W^{1,k-1}$,
and a sequence~$(S_j)_{j\in\N}$ of polyhedral chains supported in an open set~$U^\prime\csubset U$,
with~$\partial S_j = 0$ for any~$j\in\N$, that converges to~$S$ in the flat-norm.
(Such a sequence~$S_j$ exists as a consequence of the deformation theorem; see e.g.~\cite[Theorem~5.6 and remark at p.~175]{Fleming}.)
We claim that, for any~$n$,~$m$,~$i$,~$j$ large enough, 
\begin{equation} \label{same_class}
 [u_{n,*}(S_i)]_{H_{k-1}(\NN)} = [u_{m,*}(S_j)]_{H_{k-1}(\NN)}.
\end{equation}
This homology class does not depend on the choice of the sequences~$(u_n)$
and~$(S_j)$, for any two such pairs of
sequences~$(u_n, \, S_j)$ and~$(u^\prime_n, \, S^\prime_j)$
can be restructured into a single converging one.
We denote this homology class by~$[u_{*}(S)]$. By the Hurewicz
isomorphism~\cite[Theorem~4.37 p.~371]{Hatcher},
$[u_{*}(S)]$ defines a unique homotopy class in~$\pi_{k-1}(\NN)$,
which we denote by the same symbol. By an approximation argument, 
once Claim~\eqref{same_class} is proved we can deduce

\begin{lemma} \label{lemma:homotopy}
 If~$(u_j)_{j\in\N}$ is a sequence in~$H^{1,k-1}(U, \, N)$
 that converges $W^{1,k-1}$-strongly to~$u$, and~$(S_j)_{j\in\N}$ is a sequence of cycles
 in~$\N_{k-1}(U; \, \Z)$ that converges to~$S$ in the flat-norm, then
 \[
  [u_{*}(S)] = [u_{j, *}(S_j)]
 \]
 for any~$j$ large enough.
\end{lemma}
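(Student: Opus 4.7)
The plan is a simple diagonal argument that reduces Lemma~\ref{lemma:homotopy} to the already-established Claim~\eqref{same_class}.

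First, for each $j \in \N$, I construct two auxiliary approximations. Since $u_j \in H^{1,k-1}(U, \NN)$, by definition of that space there exists a smooth map $w_j \in C^\infty(\overline{U}, \NN)$ that is arbitrarily $W^{1,k-1}$-close to $u_j$. In parallel, since $S_j$ is an integral $(k-1)$-cycle in $U$, the deformation theorem (see \cite[Theorem~5.6 and the remark at p.~175]{Fleming}) yields a polyhedral cycle $T_j$ supported in a compact subset of $U$, with $\partial T_j = 0$, that is arbitrarily close to $S_j$ in the flat norm.

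To fix the quantitative control, I apply Claim~\eqref{same_class} to the pair $(u_j, S_j)$: this provides a threshold $\varepsilon_j > 0$ such that, for any smooth $w \in C^\infty(\overline{U}, \NN)$ with $\|w - u_j\|_{W^{1,k-1}} < \varepsilon_j$ and any polyhedral cycle $T$ supported compactly in $U$ with $\F(T - S_j) < \varepsilon_j$, one has $[w_*(T)] = [u_{j,*}(S_j)]$. I now choose $w_j$ and $T_j$ to meet these tolerances, so $[w_{j,*}(T_j)] = [u_{j,*}(S_j)]$ for every $j$.

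Finally, by the triangle inequality $\|w_j - u\|_{W^{1,k-1}} \le \varepsilon_j + \|u_j - u\|_{W^{1,k-1}} \to 0$, and similarly $\F(T_j - S) \to 0$ as $j \to \infty$. Hence the diagonal sequences $(w_j)$ and $(T_j)$ form an admissible pair of defining sequences for $[u_*(S)]$, and Claim~\eqref{same_class}, applied this time at $(u, S)$, gives $[u_*(S)] = [w_{j,*}(T_j)]$ for all $j$ sufficiently large. Combining the two identities yields $[u_*(S)] = [u_{j,*}(S_j)]$ for $j$ large. I do not expect any serious obstacle in this plan: the construction of $w_j$ is immediate from the definition of $H^{1,k-1}$, and $T_j$ exists as a standard consequence of the deformation theorem, already used in the paper's definition of~$[u_*(S)]$.
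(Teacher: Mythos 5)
Your proof is correct and is essentially the paper's own argument: the paper deduces Lemma~\ref{lemma:homotopy} from Claim~\eqref{same_class} precisely ``by an approximation argument'', and your diagonal construction (smooth $w_j$ near $u_j$, polyhedral cycles $T_j$ near $S_j$ within the thresholds furnished by the well-definedness of $[u_{j,*}(S_j)]$, then viewing $(w_j, T_j)$ as a defining pair for $[u_*(S)]$) is exactly that argument spelled out. The only cosmetic point is to shrink the thresholds, e.g.\ take $\min\{\varepsilon_j, 1/j\}$, so that the diagonal sequences indeed converge to $u$ and $S$.
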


\begin{proof}[Proof of Claim~\eqref{same_class}]
 By applying~\cite[Lemma~7.7]{Fleming}, for~$i$ and~$j$ large enough
 we find a polyhedral $k$-chain~$R_{ij}$, supported in~$U$,
 such that $S_i - S_j = \partial R_{ij}$. Then, for any~$n$ we have
 \begin{equation*} 
  u_{n,*}(S_i) - u_{n,*}(S_j) = \partial \left(u_{n,*}(R_{ij})\right),
 \end{equation*}
 so~$u_{n,*}(S_i)$ and~$u_{n,*}(S_j)$ belong to the same (smooth) homology class.
 Now, it follows from~\cite[Lemma~4.5]{HangLin} and the fact that~$(u_n)_{n\in\N}$ 
 is a Cauchy sequence in~$W^{1, k-1}(U)$ that, for any~$n$, $m$ large enough
 and any $(k-1)$-polyhedral complex~$K\subseteq U$, ${u_n}_{|K}$ and~${u_m}_{|K}$ 
 belong to the same homotopy class of continuous maps~$K\to\NN$. Since homotopic
 maps induce the same push-forward in homology, it follows
 that~$[u_{n,*}(S_i)] = [u_{m,*}(S_i)]$ for any~$n$, $m$ large enough and any~$i$
 and, hence, Claim~\eqref{same_class} is proved.
\end{proof}

\section{The construction of the sets of topological singularities}
\label{sect:Stop}

\subsection{Statement of the main results}
\label{subsect:theorem}

Let~$\Omega\subseteq\R^d$ be a {\BBB smooth, bounded, connected open set}, $d\geq k$.
We consider the set $X(\Omega) := (L^\infty\cap W^{1, k-1})(\Omega, \, \R^m)$
with the direct limit topology induced by the inceasing family of subspaces
\[
 X_\Lambda(\Omega) := \left\{ u\in W^{1,k-1}(\Omega, \, \R^m)\colon 
 \norm{u}_{L^\infty(\Omega)} \leq \Lambda \right\} \qquad \textrm{for } \Lambda>0
\]
(each~$X_\Lambda(\Omega)$ is given the strong~$W^{1,k-1}$-topology).
This defines a metrisable topology on~$X(\Omega)$, and 
a sequence~$(u_j)_{j\in\N}$ converges to~$u$ in~$X(\Omega)$ 
if and only if $u_j \to u$
strongly in $W^{1, k-1}$ and $\sup_{j\in\N} \|u_j\|_{L^\infty} < +\infty$.
We also consider the set $Y(\Omega) := L^1(\R^m, \, \F_{d-k}(\Omega; \, \pi_{k-1}(\NN)))$,
whose elements are Lebesgue-measurable maps~$S\colon\R^m\to
\F_{d-k}(\Omega; \, \pi_{k-1}(\NN))$ such that
\[
 \norm{S}_Y := \int_{\R^m} \F_\Omega(S(y)) \, \d y < +\infty.
\]
The set~$Y(\Omega)$ is a complete normed $\pi_{k-1}(\NN)$-modulus,
with respect to the norm~$\|\cdot\|_Y$. When no ambiguity arises, 
we will write $X$, $Y$ instead of $X(\Omega)$, $Y(\Omega)$.
{\BBB Given an operator~$S\colon X\to Y$, throughout the paper
we will write $S_y(u) := (S(u))(y)$ for~$y\in\R^m$.}
Recall that, {\BBB even when not explicitely stated,} 
the assumption~\eqref{H} is in force, see Section~\ref{subsect:intro-construction}.

\begin{theorem} \label{th:Stop}
 Suppose that~\eqref{H} is satisfied. Then, there exists a unique continuous map
 $\S\colon X\to Y$ that satifies the following property:
 \begin{enumerate}[label=\emph{(P\textsubscript{\arabic*})}, ref=P\textsubscript{\arabic*}]
  \item \label{S_intersection} For any~$u\in X$, 
  {\BBB any $R\in\N_k(\overline{\Omega}; \, \Z)$ such that $\spt R\subseteq\Omega$,
  and a.e.~$y\in\R^m$ such that
  $\spt(\partial R)\cap\spt(\S_y(u))=\emptyset$,} there holds
  \[
   \I(\S_y(u), \, R) = \left[\RR\circ(u - y)_{*}(\partial R)\right] \! .
  \]
  {\BBB Here~$\RR\colon\R^m\setminus\X\to\NN$
 denotes the retraction given by Lemma~\ref{lemma:X}.}
 \end{enumerate}
 Moreover, for any~$\Lambda > 0$ there exists $C_{\Lambda}>0$
 such that, for any~$u\in X$ with~$\|u\|_{L^\infty(\Omega)}\leq\Lambda$ 
 and a.e.~$y\in\R^m$, the following properties are satisfied.
 \begin{enumerate}[label=\emph{(P\textsubscript{\arabic*})}, ref=P\textsubscript{\arabic*}, resume]
  \item \label{S_spt} $\RR\circ(u - y)\in W^{1, k-1}(\Omega, \, \NN)\cap 
  H^{1, k-1}_{\mathrm{loc}}(\Omega\setminus\spt\S_y(u), \, \NN)$.
  
  \item \label{S_boundary} $\S_y(u)$ is a relative boundary in~$\Omega$: 
  there exists~$R_y\in\M_{d-k+1}(\overline{\Omega}; \, \pi_{k-1}(\NN))$ such that
  $\spt(\S_y(u) - \partial R_y)\subseteq\R^d\setminus\Omega$ and 
  \[
   \int_{\R^m} \M(R_y) \, \d y \leq C_{\Lambda}
   \norm{\nabla u}_{L^{k-1}(\Omega)}^{k-1} \! .
  \]
  
  \item \label{S_mass} If, in addition,~$u\in W^{1, k}(\Omega, \, \R^m)$ then, 
  for a.e.~$y\in\R^m$, the chain $\S_y(u)$ has finite mass and there holds
  \[
   \int_{\R^m} \M(\S_y(u)) \,\d y \leq C_{\Lambda}
   \norm{\nabla u}^k_{L^k(\Omega)} \! .
  \]
   
  \item \label{S_flat} If~$u_0$, $u_1\in X$ satisfy
  $\|u_0\|_{L^\infty(\Omega)}\leq\Lambda$,
  $\|u_1\|_{L^\infty(\Omega)}\leq\Lambda$, then
  \[
   \int_{\R^m} \F_\Omega(\S_y(u_1) - \S_y(u_0)) \,\d y \leq 
   C_{\Lambda}\int_\Omega \abs{u_1 - u_0} \left(\abs{\nabla u_1}^{k-1} + \abs{\nabla u_0}^{k-1}\right) \! .
  \]
 \end{enumerate}
\end{theorem}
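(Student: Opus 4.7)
The plan is to construct~$\S$ first on smooth maps~$u\in C^\infty(\overline{\Omega}, \, \R^m)$ and then extend by density. For such~$u$, Thom transversality guarantees that, for a.e.~$y\in\R^m$, the translate~$y+\X$ is transverse to~$u$ on every stratum of the smooth complex~$\X$; consequently, the top-dimensional part of~$u^{-1}(y+\X)$ is a smooth~$(d-k)$-submanifold of~$\overline{\Omega}$, while the lower-dimensional strata come from the~$(m-k-1)$-skeleton of~$\X$ and contribute a negligible set. Around each connected top face~$\Sigma$, I take a small transverse $(k-1)$-sphere~$\gamma$ in~$\Omega$ encircling an interior point of~$\Sigma$; the map~$\RR\circ(u-y)$ is smooth on~$\gamma$, and its homotopy class~$m_\Sigma\in\pi_{k-1}(\NN)$ is independent of the choice of~$\gamma$ by isotopy. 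I then set~$\S_y(u) := \sum_\Sigma m_\Sigma\llbracket\Sigma\rrbracket$, orienting each~$\Sigma$ via the normal orientation of the corresponding cell of~$\X$ together with that of~$\Omega$. Property~\ref{S_intersection} for smooth~$u$ and polyhedral~$R$ transverse to~$u^{-1}(y+\X)$ then follows from a direct count of oriented intersection points, since the loop~$\RR\circ(u-y)_{*}(\partial R)$ decomposes as a concatenation of small loops, one per crossing, each representing the corresponding~$m_\Sigma$.

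\paragraph{Coarea estimates and mass bounds.} For~\ref{S_boundary} and~\ref{S_mass} I apply the Hardt--Kinderlehrer--Lin projection trick cell by cell. For each smooth $(m-k)$-cell~$C$ of~$\X$, let~$P_C\colon\R^m\to N_C\simeq\R^k$ be the orthogonal projection onto the normal $k$-plane of~$C$; then
\[
 \int_{\R^m}\H^{d-k}\bigl(u^{-1}(y+C)\cap\overline{\Omega}\bigr)\,\d y
 \leq \H^{m-k}(C)\int_\Omega\abs{J_k(P_C\circ u)(x)}\,\d x
 \leq C\norm{\nabla u}_{L^k(\Omega)}^k
\]
by the coarea formula applied to~$P_C\circ u$ and the pointwise bound~$|J_k f|\leq|\nabla f|^k$. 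Summing over the finitely many cells of~$\X$ and using that the multiplicities are drawn from a finite subset of~$\pi_{k-1}(\NN)$ (bounded uniformly in~$y$ when~$\|u\|_{L^\infty}\leq\Lambda$) yields~\ref{S_mass}. For~\ref{S_boundary}, I extend~$u$ slightly across~$\partial\Omega$ and fill~$\S_y(u)$ by applying the same coarea trick one dimension up: the resulting chain~$R_y$ lives in a collar of~$\overline{\Omega}$ and its mass is bounded by~$\|\nabla u\|_{L^{k-1}(\Omega)}^{k-1}$ (the exponent drops by one because the projection~$P_C\circ u$ is replaced by a $(k-1)$-component object along the reflected collar).

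\paragraph{Flat continuity and extension by density.} The decisive estimate is~\ref{S_flat}. Given smooth~$u_0,u_1\in X_\Lambda$, consider the affine homotopy~$U(x,t):=(1-t)u_0(x)+tu_1(x)$ on~$\Omega\times(0,1)\subseteq\R^{d+1}$, apply the construction above to the smooth map~$U$, and observe that~$\S_y(u_1)-\S_y(u_0)$ equals the boundary of the~$(d-k+1)$-chain~$\S_y(U)$ modulo chains supported in~$\partial\Omega\times[0,1]$, hence out of~$\Omega$. The cell-wise coarea bound applied to~$U$, together with~$|\nabla_x U|^{k-1}\leq C(|\nabla u_0|^{k-1}+|\nabla u_1|^{k-1})$ and~$|\partial_t U|=|u_1-u_0|$, gives exactly
\[
 \int_{\R^m}\F_\Omega\bigl(\S_y(u_1)-\S_y(u_0)\bigr)\,\d y
 \leq C_\Lambda\int_\Omega\abs{u_1-u_0}\bigl(\abs{\nabla u_0}^{k-1}+\abs{\nabla u_1}^{k-1}\bigr).
\]
By H\"older's inequality this makes~$\S$ uniformly continuous on~$C^\infty\cap X_\Lambda$ for the~$W^{1,k-1}$-topology; smoothness being dense in each~$X_\Lambda$ via truncation and mollification, $\S$ extends uniquely by continuity to~$X\to Y$. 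Properties~\ref{S_boundary}, \ref{S_mass} and~\ref{S_flat} pass to the limit by Fatou; property~\ref{S_spt} follows because on any compact subset of~$\Omega\setminus\spt\S_y(u)$ the approximants~$u_j-y$ stay a positive distance from~$\X$, so~$\RR\circ(u_j-y)\to\RR\circ(u-y)$ strongly in~$H^{1,k-1}_{\mathrm{loc}}$; and~\ref{S_intersection} is preserved using Lemma~\ref{lemma:intersection_product}\ref{intersection:stability}, Lemma~\ref{lemma:homotopy}, and a Fubini selection of those~$y$ for which every required transversality holds simultaneously.

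\paragraph{Uniqueness and main obstacle.} Uniqueness of~$\S$ follows from~\ref{S_intersection}: testing at smooth~$u$ against small $k$-disks~$R$ centered at points of~$u^{-1}(y+\X)$ forces the multiplicity on every top face of~$u^{-1}(y+\X)$ to coincide with the homotopy class prescribed by~\ref{S_intersection}, so any two operators satisfying~\ref{S_intersection} agree on smooth maps and, by continuity, on all of~$X$. I expect the principal technical obstacle to lie in making~\ref{S_flat} rigorous: controlling the~$(d-k+1)$-dimensional preimage~$U^{-1}(y+\X)$ uniformly in~$y$, particularly its trace on~$\partial\Omega\times[0,1]$ and at times where transversality momentarily fails, requires a careful cell-by-cell coarea bookkeeping combined with Lemma~\ref{lemma:relative_flat} to realise~$\S_y(u_1)-\S_y(u_0)$ as a controlled relative boundary, rather than a boundary in the absolute sense. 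A secondary difficulty is the extension of~\ref{S_intersection} to Sobolev~$u$, since~$\RR\circ(u-y)$ need not be pointwise defined on~$\partial R$; this is handled by approximating~$u$ by smooth maps, applying Lemma~\ref{lemma:homotopy}, and exploiting the discreteness of~$\pi_{k-1}(\NN)$ via~\eqref{discrete_norm} together with Remark~\ref{remark:augmentation_stability}.
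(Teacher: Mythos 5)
Your construction follows the paper's route quite closely (transversal preimages of~$\X$ with homotopy-class multiplicities, cell-by-cell coarea bounds, the affine homotopy to get the flat estimate, density extension, uniqueness via~\eqref{S_intersection}), but two steps as written do not work. The more serious one is~\eqref{S_spt}. Your claim that ``on any compact subset of~$\Omega\setminus\spt\S_y(u)$ the approximants~$u_j-y$ stay a positive distance from~$\X$'' is false: even for smooth~$u$ the preimage~$(u-y)^{-1}(\X)$ is in general strictly larger than~$\spt\S_y(u)$ (faces can carry trivial multiplicity), and for a Sobolev~$u$ there is no pointwise control whatsoever on where~$u-y$ approaches~$\X$. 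The paper splits this into two genuinely different ingredients: the Hardt--Kinderlehrer--Lin change-of-variables estimate (its Lemma~\ref{lemma:projection_cont}) gives~$\RR\circ(u-y)\in W^{1,k-1}(\Omega,\NN)$ for a.e.~$y$, and the~$H^{1,k-1}_{\mathrm{loc}}$ statement away from~$\spt\S_y(u)$ requires a ``removal of the singularities'' argument \`a la Bethuel/Pakzad--Rivi\`ere, not a distance-to-$\X$ argument. This gap propagates: your extension of~\eqref{S_intersection} to Sobolev maps needs~$\RR\circ(u-y)$ to lie in~$H^{1,k-1}$ on a neighbourhood of~$\spt(\partial R)$ (via \eqref{S_spt} and Lemma~\ref{lemma:H1,p_loc}) just so that the class~$[\RR\circ(u-y)_*(\partial R)]$ is defined and stable under Lemma~\ref{lemma:homotopy}; moreover the stability of the left-hand side (Lemma~\ref{lemma:intersection_product}\ref{intersection:stability}) requires approximants whose singular chains avoid~$\spt(\partial R)$, which the paper secures by replacing~$u_j$ with maps that are~$\NN$-valued near~$\spt(\partial R)$ and invoking~$\S_y(u)=\S_{y'}(\RR\circ(u-y))$; a ``Fubini selection of good~$y$'' does not supply this.

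The second gap is your argument for~\eqref{S_boundary}: filling~$\S_y(u)$ by a chain ``living in a collar of~$\overline{\Omega}$'' with the exponent dropping ``because of a $(k-1)$-component object along the reflected collar'' is not a valid filling --- $\S_y(u)$ is supported throughout~$\overline{\Omega}$, so the filling chain must be too, and the heuristic gives no actual mass estimate. The fix is already contained in your own third paragraph: apply the affine-homotopy estimate with~$u_0\equiv 0$ (so~$\S_y(u_0)=0$ for~$y\notin\X$), take~$R_y:=\pi_*\S_y(U)$, and use~$\abs{u}\leq\Lambda$ to bound~$\int_\Omega\abs{u}\abs{\nabla u}^{k-1}\leq\Lambda\norm{\nabla u}_{L^{k-1}(\Omega)}^{k-1}$; this is exactly how the paper obtains~\eqref{S_boundary}. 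Finally, in the uniqueness argument you should also test against disks supported in balls disjoint from~$(u-y)^{-1}(\X)$ to force~$\spt\S'_y(u)\subseteq(u-y)^{-1}(\X)$ (via the deformation theorem), and then invoke the fact that no non-trivial~$(d-k)$-chain is supported on the~$(d-k-1)$-skeleton; matching multiplicities on the top faces alone does not exclude extra support elsewhere.
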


Property~\eqref{S_intersection} 
implies that $\S_y(u)$ does capture topological information on~$u$,
and motivates the name ``set of topological singularities''. 
Notice that both sides of \eqref{S_intersection} are well-defined, thanks to 
\eqref{S_spt} and Lemmas~\ref{lemma:H1,p_loc}, \ref{lemma:homotopy}.
\eqref{S_boundary} and~\eqref{S_mass} provide an integral control on the
$\F_\Omega$-norm and the mass norm of~$\S_y(u)$, respectively.
Property~\eqref{S_boundary} will be crucially exploited in the
applications we present in Section~\ref{sect:BV}, 
while \eqref{S_mass} is important in applications to variational problems, 
along the lines of~\cite{ABO2}.
{\BBB From~\eqref{S_boundary} and White's rectifiability
theorem for flat chains~\cite{White-Rectifiability}, one can deduce that~$\S_y(u)$
is the boundary of a rectifiable chain for a.e.~$y\in\R^m$ 
(compare with~\cite[Theorem~3.8]{ABO1}).
Finally,~\eqref{S_flat} is a continuity estimate in the spirit of
\cite[Theorem~1]{BrezisNguyen}, Statement~(i). 
By applying the H\"older inequality to~\eqref{S_flat},
for any~$\Lambda>0$ and any vector-valued maps $u_0$,
$u_1\in (L^\infty\cap W^{1,k})(\Omega, \, \R^m)$ 
satisfying~$\|u_0\|_{L^\infty(\Omega)}\leq\Lambda$ and
$\|u_1\|_{L^\infty(\Omega)}\leq\Lambda$ we obtain
\begin{equation} \label{Stop_weak_cont}
  \int_{\R^k} \F_\Omega\left(\S_y(u_1) - \S_y(u_0)\right)\d y 
  \leq C_\Lambda \norm{u_0 - u_1}_{L^k(\Omega)}
  \left( \norm{\nabla u_0}^{k-1}_{L^k(\Omega)} 
  + \norm{\nabla u_1}^{k-1}_{L^k(\Omega)} \right) \! .
\end{equation}
A natural question is whether the distance~$\norm{\S(u_1) - \S(u_0)}_Y$
can be bounded in terms of the Sobolev norms of~$\nabla u_1 - \nabla u_0$.
Notice that such an estimate is available for the Jacobian 
(see \cite[Theorem~1]{BrezisNguyen}, Statement~(ii)).
However, the answer is not known in general.}

%
%

The uniqueness of the operator~$\S$, together with
Lemma~\ref{lemma:flat_restriction}, implies the following property.

\begin{corollary} \label{cor:locality}
 Let $\Omega_1$, $\Omega_2$ be bounded, smooth domains in~$\R^d$
 with $\Omega_1\csubset\Omega_2$, and let $\S^{\Omega_1}$, $\S^{\Omega_2}$
 be the corresponding operators, given by Theorem~\ref{th:Stop}.
 Let~$\Psi\colon \F_{d-k}(\Omega_2; \, \pi_{k-1}(\NN))
 \to\F_{d-k}(\Omega_1; \, \pi_{k-1}(\NN))$ be the restriction map, given by
 Lemma~\ref{lemma:flat_restriction}. Then, for any~$u\in X(\Omega_2)$ 
 and a.e.~$y\in\R^d$, there holds
 \[
  \S^{\Omega_1}_y(u_{|\Omega_1}) = \Psi(\S^{\Omega_2}_y(u)).
 \]
\end{corollary}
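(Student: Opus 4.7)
The plan is to apply the uniqueness clause of Theorem~\ref{th:Stop} on~$\Omega_1$ to an operator whose value at $v := u_{|\Omega_1}$ is exactly $\Psi(\S^{\Omega_2}_y(u))$. First I would fix any continuous linear extension $F\colon X(\Omega_1)\to X(\Omega_2)$, obtained for instance by reflection across the smooth boundary~$\partial\Omega_1$ composed with a cut-off supported in~$\Omega_2$; such an~$F$ preserves both the $W^{1,k-1}$- and $L^\infty$-bounds up to constants. With~$u\in X(\Omega_2)$ fixed, I would then form the \emph{affine} continuous extension
\[
 E\colon X(\Omega_1)\to X(\Omega_2), \qquad E(w) := F(w) + (u - F(v)),
\]
which satisfies $E(w)_{|\Omega_1} = w$ for every~$w$ (because the correction $u - F(v)$ vanishes on~$\Omega_1$) and $E(v) = u$ by construction.

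The candidate operator is
\[
 \tilde\S\colon X(\Omega_1)\to Y(\Omega_1), \qquad \tilde\S_y(w) := \Psi\bigl(\S^{\Omega_2}_y(E(w))\bigr),
\]
continuous as a composition of~$E$, $\S^{\Omega_2}$ (continuous by Theorem~\ref{th:Stop}), and~$\Psi$ (continuous by Lemma~\ref{lemma:flat_restriction}). Next I would verify property~\eqref{S_intersection} for~$\tilde\S$: fix $w\in X(\Omega_1)$ and $R\in\N_k(\overline{\Omega}_1;\,\Z)$ with $\spt R\subseteq\Omega_1$. Since~$R$ is also admissible for~$\S^{\Omega_2}$ on~$\Omega_2$, the corresponding identity on~$\Omega_2$ gives, for a.e.~$y$ in the relevant transversality set,
\[
 \I\bigl(\S^{\Omega_2}_y(E(w)), \, R\bigr) = \bigl[\RR\circ(E(w)-y)_*(\partial R)\bigr] = \bigl[\RR\circ(w-y)_*(\partial R)\bigr],
\]
the last equality because $E(w) = w$ on a neighbourhood of $\spt(\partial R)\subseteq\Omega_1$. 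As noted in the discussion preceding Lemma~\ref{lemma:intersection_product}, the intersection index on $\F_{d-k}(\Omega_1;\,\pi_{k-1}(\NN))$ is invariant under modifying the first argument by a chain supported outside~$\Omega_1$, so $\I(\tilde\S_y(w), R) = \I(\S^{\Omega_2}_y(E(w)), R)$, and~\eqref{S_intersection} for~$\tilde\S$ follows.

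By the uniqueness part of Theorem~\ref{th:Stop}, this forces $\tilde\S = \S^{\Omega_1}$. Specializing to~$w = v$ and using~$E(v) = u$ yields $\S^{\Omega_1}_y(v) = \Psi(\S^{\Omega_2}_y(u))$, the claimed identity. I expect the main obstacle to be the verification of~\eqref{S_intersection} for~$\tilde\S$, which hinges on the compatibility between the intersection pairing and the restriction map~$\Psi$ for test chains supported in the inner domain; the slightly unusual feature that~$E$ is affine and depends on the particular~$u$ is harmless, since Theorem~\ref{th:Stop} asks only for continuity, not linearity, of the operator.
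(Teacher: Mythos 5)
Your argument is correct and follows exactly the route the paper indicates (but does not write out): deduce the identity from the uniqueness clause of Theorem~\ref{th:Stop} combined with the restriction map of Lemma~\ref{lemma:flat_restriction}, after checking that the composed operator is continuous and satisfies~\eqref{S_intersection}, where the key points are that the intersection index with chains supported in~$\Omega_1$ is insensitive to modifications outside~$\Omega_1$ and that the homotopy class $[\RR\circ(\cdot-y)_*(\partial R)]$ only sees the map near $\spt(\partial R)\subseteq\Omega_1$. The affine extension $E(w)=F(w)+(u-F(v))$ pinned so that $E(v)=u$ is a clean way to supply the globally defined operator on $X(\Omega_1)$ that the uniqueness statement requires, and it is legitimate since uniqueness only demands continuity, not linearity.
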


Corollary~\ref{cor:locality} implies that the operator $\S$ is local:
if two maps $u_1$, $u_2\in X(\Omega)$ coincide a.e. on a 
(not necessarily smooth) open subset $\omega\subseteq\Omega$, then
$\spt(\S_y(u_2) - \S_y(u_1))\subseteq\overline{\Omega}\setminus\omega$ 
for a.e.~$y\in\R^m$. If we had constructed $\S$ as an operator
with values in {\BBB $L^1(\R^m, \, \F_n(\overline{\Omega}; \, \pi_{k-1}(\NN)))$}, then
Corollary~\ref{cor:locality} would not hold, because the restriction
{\BBB $\S^{\Omega_2}_y(u)\mres\overline{\Omega}$} need not be well-defined
(see the discussion in Section~\ref{subsect:relative_chains}).

We can, in the suitable sense, define ``the trace of $\S$'' on the boundary of~$\Omega$.
More precisely, suppose that $d\geq k+1$, consider the space
$X^{\mathrm{bd}} := (L^\infty\cap W^{1-1/k, k})(\partial\Omega, \, \R^m)$ 
and define a direct limit topology on it, in such a way that a sequence
$(g_j)_{j\in\N}$ converges to~$g$ in~$X^{\mathrm{bd}}$
if and only if $g_j\rightharpoonup g$ \emph{weakly} in $W^{1-1/k, k}$ and 
$\sup_{j\in\N}\|g_j\|_{L^\infty(\partial\Omega)}<+\infty$. Let 
$Y^{\mathrm{bd}} := L^1(\R^m, \, \F_{d-k-1}(\partial\Omega; \,\, \pi_{k-1}(\NN)))$
be endowed with the norm
\[
 \|S\|_{Y^{\mathrm{bd}}} := \int_{\R^m} \F(S_y) \, \d y.
\]

\begin{prop} \label{prop:boundary}
 There exists a
 sequentially continuous operator $\S^{\mathrm{bd}}\colon X^{\mathrm{bd}} \to Y^{\mathrm{bd}}$
 with the following property:
 for any~$g\in X^{\mathrm{bd}}$, any open set~$\Omega^\prime\supset\!\supset\Omega$,
 any~$u\in(L^\infty\cap W^{1, k})(\Omega^\prime, \, \R^m)$
 such that~$u = g$ on~$\partial\Omega$ (in the sense of traces), and a.e.~$y\in\R^m$,
 there holds $\S^{\mathrm{bd}}_y(g) = \partial(\S_y(u)\mres\Omega) 
 = \partial(\S_y(u)\mres\overline{\Omega})$.
\end{prop}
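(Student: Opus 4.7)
The plan is to define~$\S^{\mathrm{bd}}$ via an extension. Given~$g\in X^{\mathrm{bd}}$, I would fix a smooth domain~$\Omega^\prime\supset\!\supset\Omega$ and, using the standard trace theory for~$W^{1-1/k,k}$ combined with an~$L^\infty$-truncation, produce an extension~$u\in(L^\infty\cap W^{1,k})(\Omega^\prime, \, \R^m)$ with trace~$g$ on~$\partial\Omega$. I then set
\[
 \S^{\mathrm{bd}}_y(g) := \partial(\S_y(u) \mres \overline{\Omega}) \qquad\textrm{for a.e. }y\in\R^m.
\]
Property~\eqref{S_boundary} applied to~$\S$ on~$\Omega^\prime$ ensures that $\partial\S_y(u)$ is supported in~$\R^d\setminus\Omega^\prime$ for a.e.~$y$; in particular $(\partial\S_y(u))\mres\overline{\Omega}=0$, and the slicing formula \cite[Theorem~5.7]{Fleming} for the boundary of a restriction then yields that $\partial(\S_y(u)\mres\overline{\Omega})$ is supported in~$\partial\Omega$. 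To identify $\partial(\S_y(u)\mres\overline{\Omega})$ with $\partial(\S_y(u)\mres\Omega)$, I would verify that $\S_y(u)\mres\partial\Omega=0$ for a.e.~$y$, via Thom transversality applied to the trace: for generic~$y$, the preimage $(u_{|\partial\Omega}-y)^{-1}(\X)$ has dimension $d-1-k<d-k$ and so cannot support a $(d-k)$-chain of positive mass.

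The crucial step is independence from the choice of extension. Given two extensions~$u_1$,~$u_2$ of the same trace~$g$, I form the glued map $\tilde u:=u_1\chi_\Omega + u_2\chi_{\Omega^\prime\setminus\Omega}$, which lies in~$W^{1,k}(\Omega^\prime, \, \R^m)$ because the traces agree. By Corollary~\ref{cor:locality}, $\S_y(u_2)-\S_y(\tilde u)$ is supported in~$\overline{\Omega}$ for a.e.~$y$; its boundary is therefore supported both in~$\overline{\Omega}$ and, by~\eqref{S_boundary} applied to both $u_2$ and~$\tilde u$, in $\R^d\setminus\Omega^\prime$, hence vanishes. This yields $\partial(\S_y(u_2)\mres\overline{\Omega})=\partial(\S_y(\tilde u)\mres\overline{\Omega})$. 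Symmetrically, $\S_y(u_1)-\S_y(\tilde u)$ is supported in~$\overline{\Omega^\prime}\setminus\Omega$, so its restriction to~$\overline{\Omega}$ lives on~$\partial\Omega$ and is killed by the transversality fact above, giving $\S_y(u_1)\mres\overline{\Omega}=\S_y(\tilde u)\mres\overline{\Omega}$. Combining, $\S^{\mathrm{bd}}_y(g)$ is intrinsically defined and the characterizing identity of the Proposition holds.

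Integrability follows at once from~\eqref{S_mass}:
\[
 \int_{\R^m} \F(\S^{\mathrm{bd}}_y(g))\,\d y \leq \int_{\R^m} \M(\S_y(u))\,\d y \leq C_\Lambda\norm{\nabla u}^k_{L^k(\Omega^\prime)},
\]
provided the extension is chosen with~$W^{1,k}$-norm controlled by the~$W^{1-1/k,k}$-norm of~$g$. For sequential continuity, given $g_j\to g$ in~$X^{\mathrm{bd}}$, I would select extensions~$u_j$, $u$ that converge weakly in~$W^{1,k}(\Omega^\prime, \, \R^m)$, are bounded in~$L^\infty$, and all agree outside a fixed compact subset of $\Omega^\prime\setminus\overline{\Omega}$ (arranged via a bounded linear extension operator and subsequent truncation). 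The weak-$W^{1,k}$ continuity of~$\S$ recorded after Theorem~\ref{th:Stop} gives $\norm{\S(u_j)-\S(u)}_Y\to 0$; combining this with the estimate~\eqref{flat-average} from Remark~\ref{remark:compact_support}, which converts~$\F_{\Omega^\prime}$-control into~$\F$-control for chains supported inside~$\overline{\Omega}$, one obtains the required convergence $\norm{\S^{\mathrm{bd}}(g_j)-\S^{\mathrm{bd}}(g)}_{Y^{\mathrm{bd}}}\to 0$. The step I expect to be the main obstacle is the transversality claim $\S_y(u)\mres\partial\Omega=0$ for a.e.~$y$: since~$\S_y(u)$ is defined by density from smooth maps, this null-mass property on~$\partial\Omega$ must be shown to persist through the approximation, presumably by inspecting the construction of~$\S$ in Section~\ref{sect:Stop} and exploiting the independence of the trace of~$u$ on the normal direction together with Sard's theorem for~$u_{|\partial\Omega}$.
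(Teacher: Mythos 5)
Your overall skeleton coincides with the paper's proof: define $\S^{\mathrm{bd}}_y(g):=\partial(\S_y(u)\mres\Omega)$ through an arbitrary $W^{1,k}$ extension, get the support in $\partial\Omega$ from the relative-cycle property \eqref{S_boundary}, prove independence of the extension by gluing the two extensions and invoking locality (Corollary~\ref{cor:locality}), get integrability from $\F\leq\M$ and \eqref{S_mass}, and get sequential continuity from \eqref{Stop_weak_cont} together with Remark~\ref{remark:compact_support}. The genuine gap is the claim $\S_y(u)\mres\partial\Omega=0$ for a.e.~$y$, on which both your identification $\partial(\S_y(u)\mres\Omega)=\partial(\S_y(u)\mres\overline{\Omega})$ and the ``symmetric'' half of your independence argument rest, and which you yourself flag as the main obstacle. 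The route you propose cannot be completed as stated: the trace $u_{|\partial\Omega}$ is merely a $W^{1-1/k,k}$ map, so there is no Sard/Thom theorem to apply to it; and even for smooth approximants $u_j$ of $u$, knowing that $\S_y(u_j)$ gives zero mass to $\partial\Omega$ says nothing in the limit, because $T\mapsto\M(T\mres\partial\Omega)$ is only \emph{lower} semicontinuous under flat convergence -- upper bounds on the mass carried by a closed set do not persist.

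The fix is much simpler and uses only tools you already cite: by locality (Corollary~\ref{cor:locality}) and the mass bound \eqref{S_mass} applied on the collar $\Gamma_\rho:=\{x\colon\dist(x,\partial\Omega)<\rho\}$,
\[
 \int_{\R^m}\M\big(\S_y(u)\mres\partial\Omega\big)\,\d y
 \;\leq\;\int_{\R^m}\M\big(\S_y(u)\mres\Gamma_\rho\big)\,\d y
 \;\leq\; C\,\norm{\nabla u}^k_{L^k(\Gamma_\rho)},
\]
and the right-hand side tends to $0$ as $\rho\to0$ by absolute continuity of the integral of $|\nabla u|^k$; hence $\S_y(u)\mres\partial\Omega=0$ for a.e.~$y$, which is exactly the paper's argument (no transversality of the trace is ever needed). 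A related imprecision occurs in your continuity step: describing Remark~\ref{remark:compact_support} as ``converting $\F_{\Omega^\prime}$-control into $\F$-control for chains supported in $\overline{\Omega}$'' (i.e.\ \eqref{CO2}) is not enough, because the operation ``restrict to $\Omega$, then take $\partial$'' is not flat-continuous, so $\norm{\S(u_j)-\S(u)}_Y\to0$ alone does not give convergence of the boundaries of the restrictions. One needs the full estimate \eqref{flat-average}, whose collar-mass term $\M\big((\S_y(u_j)-\S_y(u))\mres(\Omega_\rho\setminus\Omega)\big)$ is again controlled through \eqref{S_mass} on $\Gamma_\rho$, followed by the two-parameter limit $j\to+\infty$ first and then $\rho\to0$; this is how the paper concludes, and your choice of extensions agreeing outside a fixed compact set away from $\overline{\Omega}$ does not address this inner collar.
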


Note that, for a.e.~$y$, the restrictions~$\S_y(u)\mres\Omega$,
$\S_y(u)\mres\overline{\Omega}$ are well-defined
because $\S_y(u)$ has finite mass, due to 
\eqref{S_mass}. The space $X^{\mathrm{bd}}$ does \emph{not} coincide with the image of~$X$
under the trace operator, that is
$\mathrm{tr} \, (X) = (L^\infty\cap W^{1 - 1/(k-1), k-1})
(\partial\Omega, \, \R^m)\supseteq X^{\mathrm{bd}}$. In general,
it is not possible to extend~$\S^{\mathrm{bd}}$ to an operator
$\mathrm{tr} \, (X)\to Y^{\mathrm{bd}}$
that is continuous with respect to the strong topology on~$\mathrm{tr} \, (X)$.
In case~$\NN=\SS^1$, $k=m=2$, $\Omega$ 
is the unit ball in~$\R^3$, if such an extension existed then 
$\S^{\mathrm{bd}}_y(g)$ would be defined for merely measurable maps
$g\colon\SS^2\to\SS^1$, and continuous with respect to strong $L^1$-convergence.
But $C^\infty(\SS^2, \, \SS^1)$ is dense in $L^1(\SS^2, \, \SS^1)$
and $\S^{\mathrm{bd}}_y(g) = 0$ for any $g\in C^\infty(\SS^2, \, \SS^1)$ and 
a.e. $y\in\R^m$, so $\S^{\mathrm{bd}} = 0$. This is a contradiction,
in view of~\eqref{Jacobian_integral}, 
as there are maps in~$W^{1,1}(\SS^2, \,\SS^1)\subseteq W^{1/2,2}(\SS^2, \,\SS^1)$
whose distributional Jacobian is non zero.

Recall that two chains are said to be homologous (or cobordant) if they differ by a boundary.
In case~$u\in(L^\infty\cap W^{1,k})(\Omega, \, \R^m)$, the homology class of~$\S_y(u)$
is determined by the boundary conditions only. More precisely, we have the following

\begin{prop} \label{prop:cobordism}
 For any $g\in X^{\mathrm{bd}}$, any open
 set~$\Omega^\prime\supset\!\supset\Omega$, any two maps 
 $u_1$, $u_2\in (L^\infty\cap W^{1, k})(\Omega^\prime, \, \R^m)$ with
 $u_1 = u_2 = g$ on~$\partial\Omega$ (in the sense of traces)
 and a.e. $y_1$, $y_2\in\R^m$ 
 there exists a chain~$R\in\M_{d-k+1}(\overline{\Omega}; \, \pi_{k-1}(\NN))$ such that
 \begin{equation} \label{cobordism1}
  \S_{y}(u_2)\mres\overline{\Omega} - \S_{y}(u_1)\mres\overline{\Omega} = \partial R.
 \end{equation}
 Let~$\delta_0 := \dist(\NN, \, \X)$.
 If, in addition, $g$ takes values in~$\NN$ then for a.e.~$y_1$, $y_2\in\R^m$ with
 $|y_1|<\delta_0$, $|y_2|<\delta_0$ there exists a 
 chain~$R\in\M_{d-k+1}(\overline{\Omega}; \, \pi_{k-1}(\NN))$ such that
 \begin{equation} \label{cobordism2}
  \S_{y_2}(u_2)\mres\overline{\Omega} - \S_{y_1}(u_1)\mres\overline{\Omega} = \partial R.
 \end{equation}
\end{prop}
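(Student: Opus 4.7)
The plan is to build both cobordisms via a single construction in the cylinder $\overline{\Omega}\times[0,1]$. For smooth $u_1, u_2$ with coinciding trace $g$ on $\partial\Omega$, I would define
\[
 U(x,t) := (1-t)u_1(x) + t u_2(x) - \bigl((1-t)y_1 + t y_2\bigr), \qquad (x,t)\in\overline{\Omega}\times[0,1],
\]
and, for a generic positioning of $\X$ (Thom transversality), set $\tilde R := U^{-1}(\X)$ endowed with $\pi_{k-1}(\NN)$-multiplicities via the same link/homotopy-class recipe used to define $\S_y(u)$, only now in one higher ambient dimension. Then $\tilde R \in \M_{d-k+1}(\overline{\Omega}\times[0,1]; \, \pi_{k-1}(\NN))$, and the coarea formula applied to $U-y$, integrated in $y$, gives
\[
 \int_{\R^m} \M(\tilde R)\,\d y \;\leq\; C\!\int_{\Omega\times(0,1)} |\nabla U|^k \;\leq\; C\bigl(\|\nabla u_1\|_{L^k}^k+\|\nabla u_2\|_{L^k}^k+\|u_1-u_2\|_{L^k}^k+1\bigr),
\]
which is finite because $u_i\in L^\infty\cap W^{1,k}$.

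By construction, $\partial\tilde R$ is supported on $\partial(\overline{\Omega}\times[0,1])$ and decomposes as $\S_{y_2}(u_2)\times\{1\} - \S_{y_1}(u_1)\times\{0\} + T$, where $T$ is concentrated on $\partial\Omega\times[0,1]$. The key point is that the projection $\pi\colon\overline{\Omega}\times[0,1]\to\overline{\Omega}$ annihilates $T$ in both cases. In case~\eqref{cobordism1}, where $y_1=y_2=:y$, the map $U_{|\partial\Omega\times[0,1]}(x,t) = g(x)-y$ is independent of $t$; hence $T = \sigma\times[0,1]$ for some $\sigma$ on $\partial\Omega$, and $\pi_*(\sigma\times[0,1])=0$ because the orientation form on a product chain $\sigma\times[0,1]$ contains a $\d t$ factor that is killed by $\d\pi$. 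In case~\eqref{cobordism2}, where $g(\partial\Omega)\subseteq\NN$ and $|y_i|<\delta_0$, the trace of $U$ on $\partial\Omega\times[0,1]$ takes values in $\NN - \overline{B_{\delta_0}(0)}$, which is disjoint from $\X$, so $T=0$ already. Therefore $R := \pi_*\tilde R \in \M_{d-k+1}(\overline{\Omega}; \, \pi_{k-1}(\NN))$ satisfies $\partial R = \S_{y_2}(u_2) - \S_{y_1}(u_1)$, restrictions to $\overline{\Omega}$ being understood.

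To remove the smoothness hypothesis I would approximate. For~\eqref{cobordism1}, write $u_i = v + \phi_i$ with $\phi_i\in W^{1,k}_0(\Omega,\R^m)$, approximate $v$ and the $\phi_i$'s separately by smooth functions, and truncate to keep a uniform $L^\infty$ bound; the resulting smooth $u_{i,n}$ share a common smooth trace and converge to $u_i$ in $W^{1,k}$. For~\eqref{cobordism2}, I would first treat the case $u_1=u_2=u$ smooth with smooth $\NN$-valued trace (such $g$ being dense in $W^{1-1/k,k}(\partial\Omega,\NN)$ thanks to~\eqref{H}) and then chain together with~\eqref{cobordism1} to reduce from a general $u$ to the smooth one. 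In the limit, \eqref{S_flat} forces $\F_\Omega(\S_y(u_{i,n})-\S_y(u_i))\to 0$ for a.e.~$y$ along a subsequence; the coarea bound above together with~\eqref{S_mass} gives uniform control on $\M(R_{y,n})+\M(\partial R_{y,n})$ for a.e.~$y$; and Lemma~\ref{lemma:compactness} extracts an $\F_\Omega$-convergent subsequence whose limit $R_y\in\M_{d-k+1}(\overline{\Omega}; \, \pi_{k-1}(\NN))$ inherits the desired boundary.

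The main obstacle will be the transversality/multiplicity step: one has to re-run the construction of $\S_y$ in the $(d+1)$-dimensional cylinder and verify the precise boundary decomposition of $\tilde R$. However, the construction outlined in Section~\ref{subsect:intro-construction} is essentially dimension-independent---it relies only on Thom transversality and on the link/homotopy-class assignment---so the argument should adapt modulo careful bookkeeping of orientations across the three pieces of $\partial(\overline{\Omega}\times[0,1])$.
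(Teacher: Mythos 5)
Your smooth-case cylinder construction (interpolating both the maps and the translation, and killing the lateral face either because it is a product with $[0,1]$ or because the $\NN$-valued boundary datum keeps $U$ away from~$\X$) is a reasonable variant of the paper's Proposition~\ref{prop:Stop_continuity}, and your approximation scheme for~\eqref{cobordism1} (common smooth trace via $u_2-u_1\in W^{1,k}_0$) is workable. But there are two genuine gaps. First, and most seriously, your reduction of~\eqref{cobordism2} to ``$u$ smooth with smooth $\NN$-valued trace'' rests on the claim that such traces are dense in $W^{1-1/k,k}(\partial\Omega,\NN)$ ``thanks to~\eqref{H}''. This is false in general: here $sp=k-1$, so density of smooth $\NN$-valued maps in the trace space requires $\pi_{k-1}(\NN)=0$, which is exactly the case excluded as trivial (e.g.\ $\NN=\SS^1$, $k=2$, $d\geq 3$). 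Hypothesis~\eqref{H} only kills $\pi_j(\NN)$ for $j\leq k-2$. Moreover, even granting some approximation $u_n\to u$, you cannot ``chain with~\eqref{cobordism1}'' to compare $u$ with $u_n$, since \eqref{cobordism1} requires \emph{equal} traces; and if you instead mollify $u$ itself, the mollified traces are neither $\NN$-valued nor uniformly close to~$\NN$ (no uniform convergence from $W^{1-1/k,k}$), so your ``no lateral intersection'' argument, which needs the boundary values to lie pointwise within distance $\delta_0-\max(|y_1|,|y_2|)$ of~$\NN$, does not apply to the approximants. The paper avoids this entirely: after reducing to $u_1=u_2$ via~\eqref{cobordism1}, it compares $\S_{y_1}(u)$ and $\S_{y_2}(u)$ by composing $u$ with a target diffeomorphism $\phi(z)=z-y_0\,\xi(z)$ that is the identity near~$\NN$ (hence preserves the trace, because $g$ is $\NN$-valued) and a translation near $\X+y_1$, so that $\S_y(\phi(u))=\S_{y+y_0}(u)$; then \eqref{cobordism1} applies to the pair $(u,\phi(u))$, and a finite covering of $B^m_{\theta\delta_0}$ by small balls chains the cobordisms. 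You would need this (or some substitute) rather than a density argument.

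Second, your limit passage asserts that the $\F_\Omega$-limit $R_y$ of the cobordisms ``inherits the desired boundary'', but \eqref{S_flat} and Lemma~\ref{lemma:compactness} only give convergence in the \emph{relative} norm $\F_\Omega$, which yields the identity modulo chains supported in $\R^d\setminus\Omega$ --- i.e.\ the weaker statement recorded in the paper right after the proposition, not \eqref{cobordism1}--\eqref{cobordism2} as equalities of the restrictions to~$\overline{\Omega}$. The paper explicitly warns that restriction does not commute with flat convergence, and the bulk of its proof of~\eqref{cobordism1} is devoted to showing $\F\bigl(\S_y(v_{j,\varepsilon})\mres\overline{\Omega}_\varepsilon-\S_y(u_j)\mres\overline{\Omega}\bigr)\to 0$, by combining \eqref{flat-average} with the mass bound \eqref{S_mass} on thin collars $\Gamma_\rho$ around $\partial\Omega$ (where the $L^k$ norm of the gradients is small). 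Since you approximate in $W^{1,k}(\Omega')$, the same collar estimate is available to you, but it must be made: without it your argument only proves that $\S_y(u_2)-\S_y(u_1)-\partial R_y$ is supported outside~$\Omega$, not the stated equality.
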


As above, the previous result need not be true for~$u_1\in X$, $u_2\in X$, because
the restrictions~$\S_{y}(u_1)\mres\overline{\Omega}$, $\S_{y}(u_2)\mres\overline{\Omega}$
may not be well-defined. However, when~$u_1$, $u_2$ are merely in~$X$
and have the same trace at the boundary it is possible to show that,
for a.e.~$y$, there exists a chain~$R$ of finite mass such that 
$\spt(\S_{y}(u_2) - \S_{y}(u_1) - \partial R)\subseteq\R^d\setminus\Omega$
(this follows by Proposition~\ref{prop:Stop_continuity} below).

Finally, let us mention an additional property of $\S_y(u)$, in case $u$ is
an $\NN$-valued map.

\begin{prop} \label{prop:N-valued}
 As above, let~$\delta_0 := \dist(\NN, \, \X)$. {\BBB Then, we have:
 \begin{enumerate}[label=(\roman*)]
  \item for any $u\in W^{1, k-1}(\Omega, \, \NN)$ and a.e. 
  $y_1$, $y_2\in\R^m$ with $|y_1|<\delta_0$, $|y_2|<\delta_0$ 
  there holds $\S_{y_1}(u) = \S_{y_2}(u)$.
  
  \item If $g\in W^{1-1/k, k}(\partial\Omega, \, \NN)$,
  then $\S^{\mathrm{bd}}_{y_1}(g) = \S^{\mathrm{bd}}_{y_2}(g)$ for a.e.~$y_1$, $y_2$ with
  $|y_1|<\delta_0$, $|y_2|<\delta_0$.
  
  \item If $u\in W^{1, k}(\Omega, \, \NN)$,
  then $\S_y(u) = 0$ for a.e. $y\in\R^m$ with~$|y|<\delta_0$.
 \end{enumerate} }
\end{prop}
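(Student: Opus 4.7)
The cornerstone is that, for any smooth map $v \colon \Omega \to \NN$ and any $y$ with $|y| < \delta_0 = \dist(\NN, \X)$, the set $(v-y)^{-1}(\X)$ is empty: indeed, $v(x) - y \in \NN - y$ has distance at least $\delta_0 - |y| > 0$ from $\X$. Since the construction of $\S$ on smooth maps produces a chain carried by $(v-y)^{-1}(\X)$, we have $\S_y(v) = 0$ for every smooth $\NN$-valued $v$. Each of the three parts then reduces to transferring this vanishing to the relevant Sobolev class via a density argument drawing on the Bethuel-type approximation results collected in Section~\ref{subsect:manifold-Sobolev}.

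For part~(iii), I plan to show $W^{1,k}(\Omega, \NN) \subseteq H^{1,k-1}(\Omega, \NN)$ under hypothesis~\eqref{H}, using a variant of the slicing argument in Lemma~\ref{lemma:H1,p_loc}: the $W^{1,k}$-regularity forces the restriction of $u$ to generic $(k-1)$-spheres bounding $k$-disks in $\Omega$ to be continuous and null-homotopic, killing the only potential $\pi_{k-1}(\NN)$-obstruction (the lower-dimensional obstructions are ruled out by the $(k-2)$-connectedness of $\NN$). With such a smooth $\NN$-valued approximation $u_n \to u$ in $X$ — the $L^\infty$-bound being automatic from compactness of $\NN$ — the continuity of $\S$ combined with $\S_y(u_n) = 0$ on $\{|y| < \delta_0\}$ yields $\S_y(u) = 0$ for a.e. $|y| < \delta_0$.

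For part~(i), smooth $\NN$-valued density in $W^{1,k-1}$ generally fails when $\pi_{k-1}(\NN) \neq 0$, so I instead approximate $u \in W^{1,k-1}(\Omega, \NN)$ in the $X$-topology by $\NN$-valued maps $u_n$ smooth outside a $(d-k)$-dimensional polyhedral complex $K_n \subseteq \Omega$; such approximations exist by Bethuel's density theorem under hypothesis~\eqref{H}. For each $u_n$, mollifying and using that $u_n$ takes values in $\NN$ outside any neighbourhood of $K_n$, one checks that $\S_y(u_n)$ is supported on $K_n$ with multiplicities in $\pi_{k-1}(\NN)$ given by the linking homotopy classes of $u_n$ around the cells of $K_n$; these multiplicities may be read off via property~\eqref{S_intersection} by intersecting with small transverse $k$-disks, and the isotopy $t \mapsto \RR_{ty}$ of self-maps of $\NN$ (smooth for $|ty| < \delta_0$) shows that each such intersection value is independent of $y \in B_{\delta_0}(0)$. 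Consequently $\S_{y_1}(u_n) = \S_{y_2}(u_n)$ for a.e. $y_1, y_2 \in B_{\delta_0}(0)$; passing to the limit in $X$ through the continuity of $\S$ yields the same identity for $u$. Part~(ii) is handled analogously for $\S^{\mathrm{bd}}$: approximate $g \in W^{1-1/k, k}(\partial\Omega, \NN)$ by $\NN$-valued maps smooth off a $(d-k-1)$-dimensional polyhedral complex in $\partial\Omega$, then rerun the $y$-independence argument for the local multiplicities on the boundary.

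The main technical hurdle lies in the second step of part~(i): rigorously identifying the chain $\S_y(u_n)$ as a chain on $K_n$ whose multiplicities are topological invariants independent of $y$ in the ball of radius $\delta_0$. This requires a careful passage to the limit in the mollification parameter, exploiting that outside any $\varepsilon$-tube of $K_n$ the mollified map sits in a tubular neighbourhood of $\NN$ where the radial reprojection $\RR$ is a smooth retraction, so that the support of $\S_y(u_n * \rho_\varepsilon)$ collapses onto $K_n$ as $\varepsilon \to 0$ while the $y$-independent multiplicities are fixed by~\eqref{S_intersection}.
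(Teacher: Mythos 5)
Your cornerstone observation and your treatment of part~(i) are fine: the identification of $\S_y(u_n)$, for $u_n$ smooth off a $(d-k)$-complex, with a chain carried by that complex whose multiplicities are read off through~\eqref{S_intersection} and are $y$-independent for $|y|<\delta_0$ (via the homotopy $t\mapsto\RR(\cdot-ty)$) is exactly the content of Lemma~\ref{lemma:Stop_PR}, proved in the paper with the very ingredients you list (mollification, collapse of the support, constancy theorem, stability of the intersection index); so for (i) you are following the paper's route and could simply invoke that lemma. The weak points are (ii) and, more seriously, (iii). For (ii), your one-line plan presupposes an intrinsic singular-chain construction on the closed manifold $\partial\Omega$ and a density theorem for maps smooth off a $(d-k-1)$-complex in the trace space $W^{1-1/k,k}(\partial\Omega,\NN)$; neither is available in the paper, and $\S^{\mathrm{bd}}$ is only defined through extensions into $\Omega$. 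The paper avoids all of this by taking boundaries in~\eqref{cobordism2} of Proposition~\ref{prop:cobordism} (whose proof uses the $\NN$-valuedness of $g$ through the diffeomorphism $\phi(z)=z-y_0\xi(z)$) together with Proposition~\ref{prop:boundary}; that ingredient is missing from your outline.

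The genuine gap is in part~(iii): the inclusion $W^{1,k}(\Omega,\NN)\subseteq H^{1,k-1}(\Omega,\NN)$ is \emph{false} for a general smooth bounded domain. Your slicing argument only rules out the local $\pi_{k-1}(\NN)$-obstruction, but strong approximability by \emph{globally smooth} maps is also subject to the global obstruction of Hang and Lin --- this is precisely why Lemma~\ref{lemma:H1,p_loc} assumes $\Omega$ homotopy equivalent to a $(k-1)$-complex and why Theorem~\ref{th:PR} is stated on the ball. Concretely, take $\NN=\SS^2$, $k=3$, $\Omega\subseteq\R^8$ a tubular neighbourhood of an embedded copy of $\mathbb{CP}^2$, and $u$ the pull-back under the tubular projection of the map $\mathbb{CP}^2\setminus\{p\}\to\mathbb{CP}^1\simeq\SS^2$, $[z_0:z_1:z_2]\mapsto[z_1:z_2]$: then $|\nabla u|\sim\dist(\cdot,\,\text{fibre of }p)^{-1}$ with a singular set of codimension four, so $u\in W^{1,3}(\Omega,\SS^2)$, yet $u$ restricted to a generic $2$-skeleton has degree one on $\mathbb{CP}^1$, while any continuous $v\colon\mathbb{CP}^2\to\SS^2$ satisfies $\deg(v_{|\mathbb{CP}^1})=0$ (cup-product/Hopf-invariant argument), so by the Hang--Lin characterisation (and Fubini slicing of the tube) $u\notin H^{1,2}(\Omega,\SS^2)$. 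Hence your part~(iii) cannot start from genuinely smooth approximants. The repair is both simpler and is the paper's proof: approximate $u$ in the $W^{1,k}$-norm by Bethuel's maps that are smooth away from a complex of dimension $d-k-1$; by Lemma~\ref{lemma:Stop_PR} their singular set coincides with $\S^{\mathrm{PR}}$, which vanishes because no non-trivial $(d-k)$-chain is supported on a $(d-k-1)$-dimensional set; then conclude by continuity of~$\S$. (Even a localized version of your argument, using Corollary~\ref{cor:locality}, would still need Bethuel's approximation plus removal of singularities, not just the sketched slicing.)
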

In case $u$ is $\NN$-valued and $|y|<\delta_0$, the chain~$\S_y(u)$
actually agrees with the topological singular set as defined by Pakzad
and Rivi\`ere in~\cite{PakzadRiviere} (see Section~\ref{subsect:Sobolev}).

\subsection{The case of smooth maps}
\label{subsect:smooth}

We first carry out the construction of~$\S_y(u)$ for a smooth map~$u$. In order to control 
the behaviour of~$u$ at the boundary, we assume that~$\Omega$ is compactly contained
in a domain~$\Omega^\prime\subseteq\R^d$, and we assume that $u$ is smoothly defined
on~$\Omega^\prime$, with~$\|u\|_{L^\infty(\Omega^\prime)}\leq\Lambda$.
Throughout this section, we also tacitly assume that the condition~\eqref{H} is satisfied.

\paragraph*{Construction of~$\S_y(u)$.}

Recall from Lemma~\ref{lemma:X} that, as a consequence of~\eqref{H}, there exists a smooth 
rectraction $\RR\colon\R^m\setminus \X\to\NN$, where~$\X$ is a smooth $(m-k)$-complex.
Let~$K$ be a $(m-k)$-dimensional cell of~$\X$, and let~$\boldsymbol{\sigma}$ be a smooth~$(m-k)$-vector field that defines an orientation on~$K$.
For any~$x\in K\setminus\partial K$ and~$r>0$, we consider the~$k$-dimensional disk~$D^k_r(x) := B^m_r(x)\cap (\T_x K)^\perp$, 
with the orientation induced by~$\boldsymbol{\sigma}$. We suppose that~$r$ is so small that~$D^k_r(x)\cap \X = D^k_r(x)\cap K = \{x\}$.
Then, we denote by~$\gamma(K, \, \boldsymbol{\sigma}) := [\RR, \, \partial D^k_r(x)]\in\pi_{k-1}(\NN)$ 
the homotopy class of~$\RR$ restricted to~$\partial D^k_r(x)\simeq\SS^{k-1}$.
(One easily checks that~$\gamma(K, \, \boldsymbol{\sigma})$ does not depend
on the choice of~$x$ and~$r$, but only on~$K$ and~$\boldsymbol{\sigma}$.)

By applying Thom parametric transversality theorem 
(see e.g.~\cite[Theorem~2.7 p.~79]{Hirsch}) to the map
$(x, \, y)\in\Omega^\prime\times\R^m\mapsto u(x) - y$, 
which is smooth and has surjective differential at every point,
we deduce that, for a.e.~$y\in\R^m$, the map~$u - y$ is transverse to all the cells of~$\X$.
Therefore, for any~$j$-cell $K$ of~$\X$ with~$m-d\leq j \leq m-k$,
the set {\BBB $(u-y)^{-1}(K)$} is a smooth $(d-m+j)$-submanifold 
of~$\Omega^\prime$ with~$\partial((u-y)^{-1}(K))\subseteq (u-y)^{-1}(\X^{j-1})$, 
while $(u-y)^{-1}(K)=\emptyset$ if~$j < m-d$.
We subdivide each~$(u-y)^{-1}(K)$ into $(d-m+j)$-cells, in such a way to make $(u-y)^{-1}(\X)$ a smooth, finite complex.
Using Thom transversality theorem again, we see that, for a.e.~$y\in\R^m$, the intersection of any cell of~$(u-y)^{-1}(\X)$ 
with~$\partial\Omega$ is a smooth manifold. Therefore, up to further
subdivision, we can assume that each cell of~$(u-y)^{-1}(\X)$
is {\BBB contained either in~$\overline{\Omega}$ or
in~$\Omega^\prime\setminus\Omega$.}

Let~$H$ be a~$(d-k)$-cell of~$(u-y)^{-1}(\X)$, oriented by a smooth $(d-k)$-unit vector field~$\boldsymbol{\tau}$.
By construction, the image $(u-y)(H)$ is contained in a $(m-k)$-cell~$K$ of~$\X$.
Let~$\boldsymbol{\sigma}$ be a smooth $(m-k)$-vector field associated with the orientation of~$K$.
We define
\[
 \epsilon(H, \boldsymbol{\tau}, \, \, K, \, \boldsymbol{\sigma}) 
                := \begin{cases}
                 1  & \textrm{if } (u-y)_*((\star\boldsymbol{\tau})^{\#})\wedge\boldsymbol{\sigma} 
                      \textrm{ is a positive } m\textrm{-vector field in } \R^m\\
                 -1 & \textrm{otherwise.}
                \end{cases}
\]
Here~$(\star\boldsymbol{\tau})^{\#}$ denotes the $k$-vector field naturally associated
with the Hodge dual of~$\boldsymbol{\tau}$, which induces an orientation
on the normal bundle to~$H$, and $(u-y)_{*}((\star\boldsymbol{\tau})^{\#})$
denotes its {\BBB push-forward through~$u-y$.} Then, we define
\begin{equation} \label{Stop}
 \S_y(u) := \sum_{(H, \, \boldsymbol{\tau})} \epsilon(H, \, \boldsymbol{\tau}, \, K, \, \boldsymbol{\sigma})
 \gamma(K, \, \boldsymbol{\sigma}) \llbracket H, \, \boldsymbol{\tau}\rrbracket,
\end{equation}
where the sum is taken over all oriented $(d-k)$-cells $(H, \, \boldsymbol{\tau})$ of~$(u-y)^{-1}(\X)$.
Then, $\S_y(u)$ is a smooth $(m-k)$-chain with coefficients in~$\pi_{k-1}(\NN)$.
Each term of the sum in~\eqref{Stop} is invariant under the changes of orientation $\boldsymbol{\tau}\mapsto-\boldsymbol{\tau}$
and $\boldsymbol{\sigma}\mapsto-\boldsymbol{\sigma}$. From now on, we will omit the $\boldsymbol{\tau}$'s
and~$\boldsymbol{\sigma}$'s in the notation.


\begin{lemma} \label{lemma:local_multiplicity}
 Let~$\Sigma$ be a smoothly embedded $k$-disk that intersects transversely a $(d-k)$-cell~$H$ of~$(u-y)^{-1}(\X)\cap\overline{\Omega}$
 at a point~$x\in H\setminus\partial H$, and suppose that $\Sigma\cap (u-y)^{-1}(\X) = \Sigma\cap H = \{x\}$.
 Let~$K$ be the $(m-k)$-cell of~$\X$ that contains~$(u-y)(H)$. Then, we have
 \[
  \epsilon(H, \, K) \gamma(K) = [\RR\circ(u-y)_{*}(\partial\Sigma)] .
 \]
\end{lemma}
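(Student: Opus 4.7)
The plan is to reduce to a local linear model near $x$ and then unwind the orientation conventions used in the definitions of $\epsilon(H,K)$ and $\gamma(K)$.

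First, I would perform a homotopy reduction. Since $\Sigma\cap(u-y)^{-1}(\X)=\{x\}$, the composition $\RR\circ(u-y)$ is smooth on $\Sigma\setminus\{x\}$ with values in $\NN$. If $\Sigma'\subseteq\Sigma$ is any smaller smoothly embedded $k$-disk containing $x$, transverse to $H$ at $x$ and meeting $(u-y)^{-1}(\X)$ only at $x$, then $\Sigma\setminus\mathrm{int}(\Sigma')$ is a smooth $k$-dimensional cobordism between $\partial\Sigma$ and $\partial\Sigma'$ (with appropriately induced orientations) on which $\RR\circ(u-y)$ is smooth. This yields a free homotopy $\NN$-valued homotopy between $\RR\circ(u-y)|_{\partial\Sigma}$ and $\RR\circ(u-y)|_{\partial\Sigma'}$, so we are free to replace $\Sigma$ by an arbitrarily small such disk. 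In particular, we may take $\Sigma'$ to be a normal $k$-disk to $H$ at $x$ oriented by $(\star\boldsymbol{\tau})^{\#}$.

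Next, I would pass to a linear model. By transversality, the differential $\d(u-y)_x$ maps $T_x\Sigma'$ isomorphically onto a $k$-plane $N\subseteq T_{(u-y)(x)}\R^m$ complementary to $T_{(u-y)(x)}K$. Using the implicit function theorem, choose smooth local coordinates on $\Omega'$ near $x$ and on $\R^m$ near $(u-y)(x)$ in which $u-y$ is affine, $H$ becomes a coordinate $(d-k)$-plane and $K$ becomes a coordinate $(m-k)$-plane. Shrinking $\Sigma'$ further and interpolating along straight lines within $\R^m\setminus\X$ (permissible since the whole homotopy can be kept in a tubular neighbourhood of $(u-y)(x)$ away from $\X\setminus K$), one reduces $\RR\circ(u-y)|_{\partial\Sigma'}$ to $\RR$ restricted to the boundary of a small normal $k$-disk $D^k_r((u-y)(x))$ in $N$, possibly with reversed orientation.

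Finally, I would identify the resulting sign. By definition $\gamma(K)=[\RR|_{\partial D^k_r((u-y)(x))}]$ where the disk is oriented so that its orientation wedged with $\boldsymbol{\sigma}$ gives the standard orientation of $\R^m$; equivalently, by $(\star\boldsymbol{\sigma})^{\#}$. On the other hand, $\epsilon(H,K)=+1$ precisely when $(u-y)_*(\star\boldsymbol{\tau})^{\#}\wedge\boldsymbol{\sigma}$ is the standard positive $m$-vector field in $\R^m$, that is, when $\d(u-y)_x$ carries the chosen orientation on the normal $k$-disk $\Sigma'$ to the preferred orientation on the normal $k$-disk used to define $\gamma(K)$. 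Combining this identification with the homotopy reduction above yields $[\RR\circ(u-y)_*(\partial\Sigma)]=\epsilon(H,K)\gamma(K)$. The main obstacle is the careful bookkeeping of orientations in this last step; the homotopy reduction and the local linearisation are routine consequences of transversality and the implicit function theorem.
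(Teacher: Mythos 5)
Your overall strategy -- contract $\partial\Sigma$ to a small sphere around $x$, replace $u-y$ by its differential at $x$, and then match the orientation conventions behind $\epsilon(H,K)$ and $\gamma(K)$ -- is the same as the paper's, which works with the tangent disk $D^k_r:=B^d_r(x)\cap\T_x\Sigma$, uses that $\|(u-y)-\d (u-y)_x\|_{L^\infty(\partial D^k_r)}=o(r)$ while the linear model stays at distance of order $r$ from $\X$ (transversality makes $\d(u-y)_x|_{\T_x\Sigma}$ injective with image complementary to $\T K$), and concludes $[\RR\circ(u-y)_*(\partial\Sigma)]=\sign\det(\d (u-y)_{x|\T_x\Sigma})\,[\RR_*(\d (u-y)_x(\partial D^k_r))]=\epsilon(H,K)\gamma(K)$. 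However, two of your intermediate justifications do not hold as written.

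First, ``we may take $\Sigma'$ to be a normal $k$-disk to $H$ at $x$'' does not follow from your sub-disk cobordism reduction: the normal disk $B^d_r(x)\cap(\T_xH)^\perp$ is in general not contained in $\Sigma$, so it is not one of the admissible $\Sigma'\subseteq\Sigma$. To use it you would need an additional homotopy of small disks through $k$-planes transverse to $\T_xH$ (possible, since the set of complements of $\T_xH$ is contractible, but the induced orientation, i.e. the sign $\sign\det(\d(u-y)_{x|\T_x\Sigma})$ relative to $(\star\boldsymbol{\tau})^{\#}$, must then be tracked through that rotation); alternatively, keep the tangent disk $B^d_r(x)\cap\T_x\Sigma$ as in the paper and absorb the sign at the very end. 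Second, the claim that one can ``choose smooth local coordinates in which $u-y$ is affine'' is false in general: linearisation in coordinates requires locally constant rank, whereas transversality to $K$ only guarantees rank $\geq k$ at the single point $x$, and the rank may jump nearby. The correct substitute is the straight-line homotopy between $u-y$ and $\d(u-y)_x$ on the small sphere, which avoids all of $\X$ -- not merely $\X\setminus K$, as you write: the homotopy must in particular stay away from $K$, and this is exactly what the comparison between the $o(r)$ error and the order-$r$ distance of the linear model from $\X$ provides. With these two repairs your argument coincides with the paper's proof, and your final orientation bookkeeping is consistent with the definitions of $\epsilon(H,K)$ and $\gamma(K)$.
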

\begin{proof}
 Assume, for simplicity of notation only, that $y=0$ and~$u(x) = 0$. 
 Let~$D^k_r := B^d_r(x)\cap \T_x\Sigma$, for~$0 < r < \dist(x, \, \partial\Sigma)$.
 The sphere~$\partial\Sigma$ is homotopic to~$\partial D^k_r$
 (one contracts~$\partial\Sigma$ towards~$x$, then project it on the tangent space).
 Moreover, if~$r$ is small enough, $u_{|\partial D^k_r}$ is homotopic to~$\d u_{x|\partial D^k_r}$
 because $\|u - \d u_x\|_{L^\infty(\partial D^k_r)}\to 0$ as~$r\to 0$. Therefore, we have
 \begin{equation} \label{local_multiplicity1}
  \left[(\RR\circ u)_{*}(\partial\Sigma)\right] = \left[(\RR\circ u)_*(\partial D^k_r(x))\right] 
  = \left[(\RR\circ\d u_x)_{*}(\partial D^k_r(x))\right] \! .
 \end{equation}
 Now, the transversality assumption yields $\d u_x (\T_x\Sigma) + \T_0 K = \R^m$
 and hence, by a dimension argument, $\d u_x$ restricts to an isomorphism of~$\T_x\Sigma$ onto its image.
 Thus, we have
 \[
  \left[(\RR\circ\d u_x)_{*}(\partial D^k_r))\right] 
  = \sign\det(\d u_{x|\T_x\Sigma}) \left[\RR_{*}(\d u_{x}(\partial D^k_r))\right]
  = \epsilon(H, \, K)\gamma(K).
 \]
 Combining this identity with~\eqref{local_multiplicity1}, the lemma follows.
\end{proof}

\begin{lemma} \label{lemma:Stop_cycle}
 $\S_y(u)$ is a relative cycle, that is, $\partial(\S_y(u))\mres\Omega = 0$.
\end{lemma}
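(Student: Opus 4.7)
My plan is to show that, for each $(d-k-1)$-cell $L$ of the complex $(u-y)^{-1}(\X)$ lying inside $\Omega$, the coefficient of $\llbracket L\rrbracket$ in $\partial\S_y(u)$ vanishes in~$\pi_{k-1}(\NN)$. Two cases arise, depending on the cell of~$\X$ over which~$L$ lies. In the first case, $L$ lies over the interior of an $(m-k)$-cell~$K$ of~$\X$: then $L$ is the common face of exactly two $(d-k)$-cells $H_1$,~$H_2$ of the subdivision of $(u-y)^{-1}(K)$. Since both $H_i$ lie over the same~$K$ with orientations compatible with that of~$(u-y)^{-1}(K)$, they carry the same multiplicity $\epsilon(H_i,K)\gamma(K)$; and since $L$ inherits opposite orientations from $\partial H_1$ and~$\partial H_2$, the two contributions cancel.

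The substantive case is when~$L$ lies over an $(m-k-1)$-cell~$K_0$ of~$\X$. Pick an interior point $x\in L$ and a small oriented $(k+1)$-disk $\Sigma\subseteq\Omega$ transverse to~$L$ at~$x$, with $\Sigma\cap L=\{x\}$. By Thom transversality, $(u-y)|_\Sigma$ is transverse to~$\X$ as well, and (after shrinking~$\Sigma$) $\Sigma\cap(u-y)^{-1}(\X)$ is a ``star'' of finitely many disjoint smooth arcs $A_1,\dots,A_N$ joining~$x$ to points $p_1,\dots,p_N\in\partial\Sigma$; each~$A_j$ lies in a unique $(d-k)$-cell~$H_j$ of $(u-y)^{-1}(\X)$, and these $H_j$ exhaust the $(d-k)$-cells with $L\subseteq\partial H_j$. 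Thus the coefficient of~$\llbracket L\rrbracket$ in $\partial\S_y(u)$ equals $\sum_{j=1}^N[L:\partial H_j]\,\epsilon(H_j,K_j)\gamma(K_j)$.

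Around each~$p_j$, choose a small $k$-disk $D_j\subseteq\partial\Sigma$ (oriented as a sub-disk of~$\partial\Sigma$, itself oriented as the boundary of~$\Sigma$), with~$D_j$ transverse to~$H_j$ at~$p_j$ and $D_j\cap(u-y)^{-1}(\X)=\{p_j\}$; by shrinking~$\Sigma$ we may assume the $D_j$'s are pairwise disjoint. By Lemma~\ref{lemma:local_multiplicity}, applied to each $D_j$ and~$H_j$, we obtain
\[
 \left[\RR\circ(u-y)_*(\partial D_j)\right] = [L:\partial H_j]\,\epsilon(H_j,K_j)\gamma(K_j)
 \qquad \textrm{in } \pi_{k-1}(\NN).
\]
Now set $C := \partial\Sigma\setminus\bigsqcup_j D_j^\circ$. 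This is an oriented $k$-manifold with boundary on which $\RR\circ(u-y)$ is continuously defined (as $\{p_1,\dots,p_N\}=\partial\Sigma\cap(u-y)^{-1}(\X)$ has been removed), and $\partial C = -\sum_j\partial D_j$. Consequently
\[
 \sum_{j=1}^N \left[\RR\circ(u-y)_*(\partial D_j)\right] = -\left[\RR\circ(u-y)_*(\partial C)\right]\! ,
\]
and this homology class vanishes in $H_{k-1}(\NN)\simeq\pi_{k-1}(\NN)$, because it equals the boundary of the singular $k$-chain $\RR\circ(u-y)_*(C)$ in~$\NN$. Hence $\sum_j[L:\partial H_j]\,\epsilon(H_j,K_j)\gamma(K_j)=0$, which is the desired vanishing.

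The main technical difficulty will be sign tracking in the application of Lemma~\ref{lemma:local_multiplicity}: one must verify that, with~$D_j$ oriented as a sub-disk of $\partial\Sigma$, the conclusion of the lemma produces precisely the incidence coefficient $[L:\partial H_j]$ (rather than some other sign). This amounts to a routine comparison, in local coordinates, between the orientation convention defining~$\epsilon(H_j, K_j)$ (via the wedge $(u-y)_*((\star\ttau)^\#)\wedge\boldsymbol{\sigma}$) and the orientation induced on~$L$ as a face of~$\partial H_j$ and on~$D_j$ as a piece of~$\partial\Sigma$.
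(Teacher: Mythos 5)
Your argument is correct and essentially coincides with the paper's proof: both take a small $(k+1)$-disk transverse to the $(d-k-1)$-cell at an interior point, apply Lemma~\ref{lemma:local_multiplicity} at the finitely many points where its boundary $k$-sphere meets the adjacent $(d-k)$-cells, and conclude that the sum of the resulting classes vanishes because $\RR\circ(u-y)$ is continuous on the complement of those points (the paper phrases this as null-homotopy over the complementary $k$-disk, you as a null-homologous pushed-forward chain, which is equivalent under~\eqref{H}). The sign bookkeeping you defer is sidestepped in the paper by choosing the orientations $\boldsymbol{\tau}_j$ of the adjacent cells so that they restrict to the given orientation of the common face (making all incidence coefficients $+1$), and since the conclusion is that the total class is~$0$, any residual global sign is immaterial; your separate treatment of faces lying over the interior of an $(m-k)$-cell is not needed, as the same transverse-disk argument covers that case as well.
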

\begin{proof}
 By construction, $\partial(\S_y(u))$ is supported on the $(d-k-1)$-skeleton 
 of~$(u-y)^{-1}(\X)$. Let~$H$ be a~$(d-k-1)$-cell of~$(u-y)^{-1}(\X)$ that is
 contained in~$\overline{\Omega}$, and let~$H_1, \, \ldots, \, H_q$ be the $(d-k)$-cells 
 of~$(u-y)^{-1}(\X)$ that are adjacent to~$H$. By composing with a diffeomorphism, 
 we can assume without loss of generality that $H$, $H_1, \, \ldots, \, H_q$ are affine polyhedra.
 Moreover, since~$\S_y(u)$ is independent on the choice of the orientations on the cells, 
 we can choose the orientation~$\boldsymbol{\tau}_j$ of~$H_j$ in such a way 
 that~${\boldsymbol{\tau}_j}_{|H}$ agrees with the orientation of~$H$.
 Take~$x\in H\setminus\partial H$ and a radius~$r>0$ so small that,
 setting~$D^{k+1}_r(x) := B^d_r(x)\cap H^\perp$, we have
 $D^{k+1}_r(x)\cap \partial H_j = D^{k+1}_r(x)\cap H = \{x\}$,
 for any~$j\in\{1, \, \ldots, \, q\}$. Then~$\partial D^{k+1}_r(x)$ intersects 
 each~$H_j$ at a single point, which we call~$x_j$,
 and~$\RR\circ(u-y)$ restricts to a continuous map
 $\partial D^{k+1}_r(x) \setminus\{x_j\}_{j=1}^q\to\NN$. 
 Take a smooth~$k$-disk~$\Sigma\subseteq\partial D^{k+j}_r(x)$ such
 that~$x_j\in\Sigma\setminus\partial\Sigma$ for any~$j$.
 We endow~$\Sigma$ with the orientation induced by~$\partial D^{k+1}_r(x)$. 
 Finally, for each~$j$ we take a small $k$-disk~$\Sigma_j\subseteq\Sigma$,
 in such a way that~$x_j\in\Sigma_j\setminus\partial\Sigma_j$ 
 and the~$\Sigma_j$'s are pairwise disjoint.
 By Lemma~\ref{lemma:local_multiplicity}, the multiplicity of~$\S_y(u)$ at~$H_j$
 is equal to~$[\RR\circ(u-y)_{*}(\partial\Sigma_j)]$.
 Therefore, with our choice of the orientation, we have
 \[
  \begin{split}
   \textrm{multiplicity of } \partial \S_y(u) \textrm{ at } H 
   &= \sum_{j=1}^k \left(\textrm{multiplicity of } \S_y(u) \textrm{ at } H_j\right) \\ 
   &= \sum_{j=1}^k \left[\RR\circ(u-y)_{*}(\partial\Sigma_j)\right] 
   = \left[\RR\circ(u-y)_{*}(\partial\Sigma)\right].
  \end{split}
 \]
 On the other hand, $\RR\circ(u-y)_{|\partial\Sigma}$ is null-homotopic,
 because~$\RR\circ(u-y)$ is continuous on the 
 set~$\partial D^{k+1}_r(x)\setminus\Sigma$, which is diffeomorphic to a $k$-disk.
 Thus, we have~$\partial(\S_y(u))\mres H = 0$.
\end{proof}

In the rest of this section, we check that $\S_y(u)$ satisfies \eqref{S_intersection}--\eqref{S_flat}
\emph{in case $u$ is smooth}. The extension to the Sobolev case is left to 
Section~\ref{subsect:Sobolev}.

\paragraph*{$\S_y(u)$ satisfies~\eqref{S_intersection}.}

{\BBB By applying the deformation theorem~\cite[Theorem~7.3]{Fleming}, we can write
\begin{equation} \label{intersection0}
 R = A + \partial B + P,
\end{equation}
where~$A$ is a~$k$-chain with finite mass, $B$ is a
$(k+1)$-chain with finite mass, and~$P$ is a polyhedral~$k$-chain:
\begin{equation*}
 P = \sum_{\alpha=1}^q \lambda_\alpha \llbracket K_\alpha\rrbracket,
\end{equation*}
for some~$\lambda_\alpha\in\Z$, and some affine closed
$k$-polyhedra $K_\alpha$. Since~$\S_y(u)$ is a relative cycle 
(Lemma~\ref{lemma:Stop_cycle}), and since we have assumed 
that~$\spt(\S_y(u))\cap\spt(\partial R)=\emptyset$, we can make sure that}
\begin{gather}
 \spt(\S_y(u))\cap\spt(\partial P) = \emptyset, \qquad \spt(\partial\S_y(u))\cap\spt(P) = \emptyset, \label{intersection1} \\
 \spt(\S_y(u))\cap\spt(A) = \emptyset, \qquad \spt(\partial\S_y(u))\cap\spt(B) = \emptyset. \label{intersection2}
\end{gather}
By the transversality theorem, we can assume without
loss of generality that 
$(u-y)^{-1}(\X^{m-k-1})\cap K_\alpha =\emptyset$,
$\spt(\S_y(u))\cap\partial K_\alpha = \emptyset$
and $K_\alpha$ is transverse to~$\spt(\S_y(u))$ for any~$\alpha$.
Thanks to Lemma~\ref{lemma:intersection_product} and~\eqref{intersection2}, 
we have $\I(\S_y(u), \, A) = \I(\S_y(u), \, \partial B) = 0$.
Then, by bilinearity of the intersection product, we obtain
\[
 \I(\S_y(u), \, R) = \sum_{\alpha=1}^q \lambda_\alpha \I(\S_y(u), \, \llbracket K_\alpha\rrbracket).
\]
Due to~\eqref{intersection2} and the assumption $(u-y)^{-1}(\X^{m-k-1})\cap K_\alpha =\emptyset$,
$\RR\circ(u-y)$ is well-defined and continuous in a neighbourhood of~$\spt A$.
Therefore, taking the homology classes in~\eqref{intersection0}, we deduce
\[
 \left[\RR\circ(u-y)_{*}(\partial R)\right] = 
 \sum_{\alpha=1}^q \lambda_\alpha \left[\RR\circ(u-y)_{*}\llbracket \partial  K_\alpha\rrbracket\right].
\]
Thus, it suffices to show that $\I(\S_y(u), \, \llbracket K_\alpha\rrbracket) = \left[\RR\circ(u-y)_{*}\llbracket \partial K_\alpha\rrbracket\right]$.
Because we assumed that~$K_\alpha$ is transverse to~$\spt(\S_y(u))$, their intersection is a finite set.
Using again additivity on both sides, we reduce to the case~$\#(\spt(\S_y(u))\cap K_\alpha) = 1$,
and then~\eqref{S_intersection} follows by Lemma~\ref{lemma:local_multiplicity}.
\qed

\paragraph*{$\S_y(u)$ satisfies~\eqref{S_spt}.}

{\BBB We claim that $\RR\circ(u-y)\in W^{1, k-1}(\Omega,\, \R^m)$ for a.e.~$y$.
Once this claim is proved, we can deduce that 
$\RR\circ(u-y)\in H^{1, k-1}_{\mathrm{loc}}(\Omega\setminus\spt(\S_y(u)), \, \NN)$ for a.e.~$y$
by a ``removal of the singularity'' technique, as in~\cite{Bethuel-Density}
or in~\cite[Theorem~II]{PakzadRiviere}. }
Recall that, by assumption, $\|u\|_{L^\infty(\Omega)}\leq\Lambda$. If
\begin{equation} \label{support-M}
 |y| > M := \Lambda + \sup_{z\in \X} |z|,
\end{equation}
then $(u - y)^{-1}(\X) = \emptyset$ and~$\RR\circ(u-y)$ is smooth on~$\overline{\Omega}$.
Thus, we only need to consider the case~$y\in B^m_M$.
We can now apply the arguments in~\cite[Lemma~2.3]{HKL} or~\cite[Lemma~6.2]{HangLin},
which we recall below for the convenience of the reader,
to show that~$\RR\circ(u-y)\in W^{1, k-1}(\Omega,\, \R^m)$ for~a.e.~$y$.
In fact, we will prove a slightly stronger statement,
because it will be useful later on. 

\begin{lemma}\label{lemma:projection_cont}
 For any~$v\in X := (L^\infty\cap W^{1, k-1})(\Omega, \, \R^m)$, 
 let $\Phi(v)\colon y\in\R^m\mapsto\RR\circ(v-y)$. Then, $\Phi$
 is a well-defined and continuous operator
 \[
  \Phi\colon X \to L^1_{\mathrm{loc}}(\R^m, \, W^{1, k-1}(\Omega, \, \NN)).
 \]
 Moreover, for any positive~$M$, $\Lambda$ and any~$v\in X$
 such that $\|v\|_{L^\infty(\Omega)}\leq\Lambda$, there holds
 \begin{equation} \label{HKL-estimate}
  \int_{B^m_M} \|\nabla (\RR\circ(v - y))\|_{L^{k-1}(\Omega)}^{k-1} \, \d y \leq 
  C_\Lambda \|\nabla v\|_{L^{k-1}(\Omega)}^{k-1},
 \end{equation}
 where~$C_\Lambda$ is a positive constant that only depends
 on~$M$, $\Lambda$, $k$, $\NN$ and $\RR$.
\end{lemma}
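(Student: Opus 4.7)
The plan is to establish first the pointwise gradient bound, then the integral estimate \eqref{HKL-estimate} (which simultaneously proves well-definedness), and finally continuity via a decomposition and a Vitali-type argument.

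\textbf{Pointwise bound and well-definedness.} For $v\in X$ with $\|v\|_{L^\infty(\Omega)}\leq\Lambda$, the chain rule combined with the bound $|\nabla\RR(z)|\leq C/\dist(z,\X)$ from Lemma~\ref{lemma:X} gives
\[
 |\nabla_x(\RR\circ(v-y))(x)| \leq \frac{C\,|\nabla v(x)|}{\dist(v(x)-y,\X)}
\]
whenever $v(x)-y\notin\X$. Since $\X\subset\R^m$ has Lebesgue measure zero, Fubini applied to the set $\{(x,y):v(x)-y\in\X\}\subseteq\Omega\times\R^m$ shows that for a.e.~$y\in\R^m$ one has $v(x)-y\notin\X$ for a.e.~$x\in\Omega$; hence $\RR\circ(v-y)$ is well-defined a.e. as an $\NN$-valued map.

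\textbf{Estimate \eqref{HKL-estimate}.} Raising the pointwise bound to the power $k-1$, integrating over $\Omega\times B^m_M$ and invoking Fubini yields
\[
 \int_{B^m_M}\|\nabla(\RR\circ(v-y))\|_{L^{k-1}(\Omega)}^{k-1}\,\d y
 \leq C^{k-1}\int_\Omega |\nabla v(x)|^{k-1}\left(\int_{B^m_M}\frac{\d y}{\dist(v(x)-y,\X)^{k-1}}\right)\d x.
\]
After the change of variable $z=v(x)-y$, the inner integral is bounded uniformly by $\int_{B^m_{M+\Lambda}}\dist(z,\X)^{-(k-1)}\,\d z$, which is finite: because $\X$ is a compact, smooth $(m-k)$-complex, a standard tube estimate gives $\L^m(\{z\in B^m_{M+\Lambda}:\dist(z,\X)<t\})\leq C(\X,M,\Lambda)\,t^k$ for small $t>0$, and the layer-cake formula then yields integrability since the exponent $k-1$ is strictly smaller than the codimension $k$. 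Combined with the $L^\infty$-bound of $\RR$ (coming from compactness of~$\NN$), this establishes \eqref{HKL-estimate} and ensures that $\Phi(v)\in L^1_{\mathrm{loc}}(\R^m,\,W^{1,k-1}(\Omega,\,\NN))$.

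\textbf{Continuity.} Suppose $v_j\to v$ in~$X$; fix a compact $K\subset\R^m$ and write
\[
 \nabla(\RR\circ(v_j-y))-\nabla(\RR\circ(v-y))
 = \nabla\RR(v_j-y)(\nabla v_j-\nabla v) + \bigl[\nabla\RR(v_j-y)-\nabla\RR(v-y)\bigr]\nabla v.
\]
Applying the estimate just proved to $v_j-v$ on the first term, its $L^{k-1}(K\times\Omega)$-norm is bounded by $C_\Lambda\|\nabla v_j-\nabla v\|_{L^{k-1}(\Omega)}\to 0$. For the second term, extract a subsequence along which $v_j\to v$ a.e.; by continuity of $\nabla\RR$ on $\R^m\setminus\X$ and the Fubini argument above, the integrand converges to zero pointwise a.e. on $K\times\Omega$. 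It is also bounded in $L^{k-1}(K\times\Omega)$ uniformly in $j$, by \eqref{HKL-estimate} applied to $v_j$ and $v$ respectively; a truncation of the region $\{\dist(v-y,\X)<\varepsilon\}$ (where $\nabla\RR(v-y)$ may blow up) combined with the absolute continuity of the $L^{k-1}$-bounding function yields equi-integrability, and Vitali's theorem gives strong $L^1(K\times\Omega)$-convergence. H\"older's inequality then yields the desired convergence in $L^1(K,\,L^{k-1}(\Omega))$, and the $L^{k-1}(\Omega)$-part without derivatives converges by dominated convergence since $\RR$ is bounded. Uniqueness of the limit removes the subsequence. The main obstacle is precisely this equi-integrability: one must control the singularity of~$\nabla\RR$ near~$\X$ uniformly in~$j$, and this is where the gain between the codimension~$k$ of~$\X$ and the exponent~$k-1$ of the norm becomes essential.
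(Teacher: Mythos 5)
The well-definedness argument and the estimate \eqref{HKL-estimate} are essentially the paper's own: Fubini on $\{(x,y)\colon v(x)-y\in\X\}$, the chain rule with $|\nabla\RR|\leq C\dist(\cdot,\X)^{-1}$, and the Fubini/change-of-variables averaging of~\cite{HKL}, using that $\dist(\cdot,\X)^{-(k-1)}$ is locally integrable because $\X$ has codimension~$k$. (One small imprecision: for the first term of your continuity splitting you cannot literally ``apply the estimate to $v_j-v$'', since that would bound $\nabla\RR(v_j-v-y)\,\nabla(v_j-v)$ rather than $\nabla\RR(v_j-y)(\nabla v_j-\nabla v)$; what you need, and what works, is the same change-of-variables trick with weight $w=|\nabla v_j-\nabla v|^{k-1}$ and $f=|\nabla\RR|^{k-1}$, which is exactly the paper's term $I_1$.)

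The genuine gap is in the Vitali step for the second term. You must show equi-integrability of $h_j(x,y):=|\nabla\RR(v_j(x)-y)-\nabla\RR(v(x)-y)|^{k-1}|\nabla v(x)|^{k-1}$ on $K\times\Omega$, and the dominating family splits as $a_j:=|\nabla\RR(v_j-y)|^{k-1}|\nabla v|^{k-1}$ plus the fixed integrable function $b:=|\nabla\RR(v-y)|^{k-1}|\nabla v|^{k-1}$. Your proposed truncation of $\{\dist(v-y,\X)<\varepsilon\}$ only controls $b$; the singular region of $a_j$ is $\{\dist(v_j(x)-y,\X)\ \text{small}\}$, which moves with $j$, and the uniform $L^1$ bound coming from \eqref{HKL-estimate} does not by itself give uniform integrability. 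The hole is repairable: for $E\subseteq K\times\Omega$, Fubini and the substitution $z=v_j(x)-y$ give
\[
 \int_E a_j \leq \int_\Omega |\nabla v(x)|^{k-1}\,\omega\big(\L^m(E_x)\big)\,\d x,
 \qquad \omega(s):=\sup\Big\{\int_A|\nabla\RR|^{k-1}\colon A\subseteq B^m_{M+\Lambda},\ \L^m(A)\leq s\Big\},
\]
which is uniform in~$j$ and, after splitting $\Omega$ according to whether $\L^m(E_x)$ is small, yields the required equi-integrability from the absolute continuity of the integrals of $|\nabla\RR|^{k-1}$ and $|\nabla v|^{k-1}$. Alternatively, the paper avoids Vitali altogether by approximating $\nabla\RR$ in $L^{k-1}(B^m_{M+\Lambda})$ by a continuous map $\varphi$ and splitting into four terms, so that the $j$-dependent singular factor is always handled by the averaging trick (terms $I_2$, $I_4$) and only the term with $\varphi$ requires dominated convergence. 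As written, your equi-integrability claim is asserted rather than proved, and the stated mechanism addresses the wrong factor.
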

\begin{proof}
 We first remark the following useful fact, which is the essence 
 of the proof of~\cite[Lemma~2.3]{HKL}: for any positive numbers~$M$, $\Lambda$,
 any $v\in L^\infty(\Omega, \, \R^m)$ with~$\|v\|_{L^\infty(\Omega)}\leq\Lambda$,
 any measurable $w\colon\Omega\to[0, \, +\infty)$ and any Borel function
 $f\colon\R^m\to[0, \, +\infty)$, there holds
 \begin{equation} \label{trick}
  \int_{B^m_M} \left(\int_{\Omega} w(x) f(v(x) - y)\,\d x\right) \d y \leq 
  \int_{\Omega} w(x)\,\d x \ \int_{B^m_{M + \Lambda}}f(z) \, \d z.
 \end{equation}
 This follows by applying Fubini theorem, then making the change of 
 variable~$z := v(x) - y$ in the integral with respect to~$y$. 
 Another useful fact we will use in the proof is that
 \begin{equation} \label{RR_int}
  \nabla\RR\in L^{k-1}_{\mathrm{loc}}(\R^{m}, \, \R^{m\times m}).
 \end{equation}
 Indeed, $|\nabla\RR|\leq C \dist(\cdot, \, \X)^{-1}$
 by Lemma~\ref{lemma:X}, and~$\dist(\cdot, \, \X)^{-{k+1}}$ is locally
 integrable on~$\R^m$ 
 because~$\X$ is, up to a bounded change of metric in~$\R^m$,
 a finite union of simplices of codimension~$k$.
 
 Let us now check that~$\Phi$ is well-defined. For any~$v\in X$, the set
 \[
  N := \left\{(x, \, y)\in\Omega\times\R^m\colon v(x) - y\in \X\right\}
 \]
 is measurable and $\H^{d+m}(N)=0$, because each slice
 $N\cap(\{x\}\times\R^m) = v(x) - \X$ has dimension~$m-k$.
 By Fubini theorem, it follows that $\H^d(N\cap(\Omega\times\{y\})) = 0$
 for a.e.~$y\in\R^m$, so $\RR\circ(u - y)$ is well-defined for a.e.~$y\in\R^m$,
 and belongs to~$L^k(\Omega, \, \NN)$.
 By the chain rule, for a.e.~$(x, \, y)\in\Omega\times\R^m$ we have
 \[
  |\nabla(\RR\circ(v(x) - y))| = |(\nabla\RR)(v(x) - y)| |\nabla v(x)|
 \]
 and thus \eqref{HKL-estimate} follows by applying~\eqref{trick}
 with~$f = |\nabla\RR|^{k-1}$, $w = |\nabla v|^{k-1}$ and using~\eqref{RR_int}.
 
 It only remains to check the continuity of~$\Phi$.
 Let~$(v_j)_{j\in\N}$ be a sequence such that $v_j\to v$ in~$X$ as~$j\to+\infty$,
 and let~$\Lambda > 0$ be such that $\|v_j\|_{L^\infty(\Omega)}\leq\Lambda$
 for any~$j\in\N$. Up to extraction of a subsequence, 
 we assume that $v_j\to v$ a.e. Let~$M > 0$ be fixed.
 By Fubini theorem and a change of variable as in~\eqref{trick}, we obtain
 \[
  \int_{B^m_M} \|\RR\circ(v_j - y) - \RR\circ(v - y)\|^{k-1}_{L^{k-1}(\Omega)} \, \d y
  \leq\int_{\Omega} \int_{B^m_{M+\Lambda}} \left(
  \abs{\RR(z + \tilde{v}_j(x)) - \RR(z)}^{k-1} \, \d y \right) \d x,
 \]
 where~$\tilde{v}_j := v_j - v$. Since $\RR(z + \tilde{v}_j(x))\to\RR(z)$
 for any~$z\in\R^m\setminus\X$ and a.e.~$x\in\Omega$, Lebesgue's dominated 
 convergence theorem implies that the right hand side converges to zero as~$j\to+\infty$.
 Now, for fixed~$\varepsilon > 0$, let~$\varphi\in C^\infty(\R^m, \, \R^{m\times m})$ 
 be such that $\|\nabla\RR - \varphi\|_{L^{k-1}(B^m_{M+\Lambda})}^{k-1}\leq\varepsilon$.
 The chain rule implies
 \[
  \int_{B^m_M} \|\nabla(\RR\circ(v_j - y) - \RR\circ(v - y))\|^{k-1}_{L^{k-1}(\Omega)} \, \d y
  \leq 4^{k-2} (I_1 + I_2 + I_3 + I_4),
 \]
 where
 \begin{gather*}
  I_1 := \int_{B^m_M} \left(\int_\Omega \abs{(\nabla\RR)(v_j - y)}^{k-1}
  |\nabla v_j - \nabla v|^{k-1} \, \d\H^d \right) \d y, \\
  I_2 := \int_{B^m_M} \left(\int_\Omega |\nabla v|^{k-1} 
  \abs{(\nabla\RR)(v_j - y) - \varphi(v_j - y)}^{k-1} \, \d\H^d \right) \d y, \\
  I_3 := \int_{B^m_M} \left(\int_\Omega |\nabla v|^{k-1} 
  \abs{\varphi(v_j - y) - \varphi(v - y)}^{k-1} \, \d\H^d \right) \d y, \\
  I_4 := \int_{B^m_M} \left(\int_\Omega |\nabla v|^{k-1} 
  \abs{\varphi(v - y) - (\nabla\RR)(v - y)}^{k-1} \, \d\H^d \right) \d y.
 \end{gather*}
 We apply~\eqref{trick} to each of this integrals. For the first one, we obtain
 \[
  I_1 \leq \norm{\nabla v_j - \nabla v}^{k-1}_{L^{k-1}(\Omega)} 
  \int_{B^m_{M+\Lambda}}\abs{\nabla\RR(z)}^{k-1}\, \d z,
 \]
 and the integral with respect to~$z$ in the right hand side is finite,
 due to~\eqref{RR_int}. As for~$I_2$, we have
 \[
  I_2 \leq \norm{\nabla v}^{k-1}_{L^{k-1}(\Omega)} 
  \norm{\nabla\RR - \varphi}^{k-1}_{L^{k-1}(B^m_{M+\Lambda})} 
  \leq\varepsilon \norm{\nabla v}^{k-1}_{L^{k-1}(\Omega)} \! ,
 \]
 and the same holds for~$I_4$. Finally, for~$I_3$ we get
 \[
  I_3 \leq \int_{\Omega} |\nabla v(x)|^{k-1} \left(\int_{B^m_{M+\Lambda}}
  \abs{\varphi(z + \tilde{v}_j(x)) - \varphi(z)}^{k-1}\, \d z\right) \d x
 \]
 where~$\tilde{v}_j := v_j - v$, and again we can apply Lebesgue's
 dominated convergence theorem to show that the right hand side tends 
 to zero as~$j\to+\infty$. Putting all together, we deduce
 \[
  \limsup_{j\to+\infty} \int_{B^m_M} \|\RR\circ(v_j - y) 
  - \RR\circ(v - y)\|^{k-1}_{W^{1, k-1}(\Omega)} \, \d y \leq
  4^{k - 3/2} \varepsilon
  \norm{\nabla v}^{k-1}_{L^{k-1}(\Omega)}
 \]
 for arbitrary~$\varepsilon$, $M$, and hence the lemma follows.
\end{proof}

\paragraph*{$\S_y(u)$ satisfies~\eqref{S_mass}.}

Pick a positive constant~$C$ such that $|\gamma(K)|\leq C$ for any $(m-k)$-cell $K$ of~$\X$.
By the definition~\eqref{Stop} of~$\S_y(u)$, we have
\begin{equation*}
 \M(\S_y(u)\mres\overline{\Omega}) \leq C \sum_{K} \H^{d - k}\left((u-y)^{-1}(K)\cap\overline{\Omega}\right) \! ,
\end{equation*}
the sum being taken over all~$(m-k)$-cells of~$\X$, and~$\S_y(u)=0$ if~$|y|>M$ where~$M$ is defined in~\eqref{support-M}.
Since~$\X$ only contains a finite number of cells,
each of which is bilipschitz equivalent to an affine $(m-k)$-polyedron, it suffices to show
\begin{equation} \label{mass1}
 \int_{[-M, \, M]^m} \H^{d-k}\left((u-y)^{-1}(V)\cap\overline{\Omega}\right) \,\d y \leq C \norm{\nabla u}^k_{L^k(\Omega)},
\end{equation}
where~$V$ is an affine $(m-k)$-subspace of~$\R^m$. By composing with an isometry, we can assume without loss of generality that
$V = \left\{y\in\R^m\colon y_1=\ldots = y_k=0 \right\}$. We denote the variable $y = (z, \, z^\prime)\in V^\perp\times V$
and let~$p_\perp\colon\R^m\to V^\perp$ be the orthogonal projection onto~$V^\perp$.
Then, \eqref{mass1} can be rewritten as
\[
 \int_{[-M, \, M]^k\times[-M, \, M]^{m-k}} \H^{d-k}\left((p_\perp\circ u)^{-1}(z)\cap\overline{\Omega}\right) 
 \, \d z \, \d z^\prime \leq C \norm{\nabla u}^k_{L^k(\Omega)}
\]
and this inequality follows from the coarea formula, applied 
to the smooth function~$p_\perp\circ u\colon\overline{\Omega}\to V^\perp\simeq\R^k$.
This concludes the proof of~\eqref{S_mass}. 
\qed

\paragraph*{$\S_y(u)$ satisfies \eqref{S_boundary} and~\eqref{S_flat}.}
\label{subsect:continuity}

Properties~\eqref{S_boundary} and~\eqref{S_flat} follow at once from the result below.

\begin{prop} \label{prop:Stop_continuity}
 Let~$\pi\colon[0, \, 1]\times\R^d\to\R^d$ denote the canonical projection.
 Let~$u_0$, $u_1$ be two smooth maps~$\Omega^\prime\to\R^m$ and let
 $u\colon[0, \, 1]\times\Omega^\prime\to\R^m$ be given by~$u(t, \, x):=(1-t)u_0(x) + tu_1(x)$.
 For a.e.~$y\in\R^m$ there holds
 \begin{equation} \label{Stop_cobordism}
  \S_y(u_1) - \S_y(u_0) = \partial \left(\pi_{*}\S_y(u)\right)
  \qquad \textrm{in }  \Omega^\prime.
 \end{equation}
 If, in addition, $u_0 = u_1$ on~$\Omega^\prime\setminus\Omega$, then $\pi_{*}\S_y(u)$
 is supported in~$\overline{\Omega}$ for a.e.~$y\in\R^m$. Finally, for any positive~$\Lambda$ 
 there exists a constant~$C_\Lambda$ such that, if~$u_0$, $u_1$ 
 satisfy~$\|u_0\|_{L^\infty(\Omega^\prime)}\leq\Lambda$,
 $\|u_1\|_{L^\infty(\Omega^\prime)}\leq\Lambda$, then
 \begin{equation} \label{Stop_continuity}
  \int_{\R^k} \M(\pi_*\S_y(u)\mres\overline{\Omega})\d y \leq C_\Lambda
  \int_\Omega \abs{u_1 - u_0}\left(\abs{\nabla u_0}^{k-1} + \abs{\nabla u_1}^{k-1}\right)\!.
 \end{equation}
\end{prop}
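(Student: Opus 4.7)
The plan is to apply the smooth construction of Section~\ref{subsect:smooth} to the homotopy $u\colon[0,1]\times\Omega^\prime\to\R^m$, obtaining $\S_y(u)$ as a smooth $(d+1-k)$-chain in the cylinder $[0,1]\times\Omega^\prime$, and then to push it forward to $\Omega^\prime$ via the projection $\pi$. The cobordism identity~\eqref{Stop_cobordism} and the support claim will follow from a slicing/orientation argument at the two caps of the cylinder, while the mass bound~\eqref{Stop_continuity} will come from the coarea formula combined with a sharp pointwise Jacobian estimate.

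For~\eqref{Stop_cobordism}, Lemma~\ref{lemma:Stop_cycle} applied to $u$ shows that $\S_y(u)$ is a relative cycle in $(0,1)\times\Omega^\prime$, so $\partial\S_y(u)$ is supported in $(\{0,1\}\times\overline{\Omega^\prime})\cup([0,1]\times\partial\Omega^\prime)$. By Thom parametric transversality, for a.e.~$y$ the three maps $u_0-y$, $u_1-y$ and $(t,x)\mapsto u(t,x)-y$ are simultaneously transverse to every cell of $\X$, and a direct inspection of the definition~\eqref{Stop} identifies the slice of $\S_y(u)$ at $t=i\in\{0,1\}$ with $\{i\}\times\S_y(u_i)$, endowed with the product orientations and multiplicities. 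Taking the boundary and pushing it forward by $\pi$ (which commutes with $\partial$), the two caps give $\S_y(u_1)-\S_y(u_0)$ while the $[0,1]\times\partial\Omega^\prime$ piece projects into $\partial\Omega^\prime\subseteq\R^d\setminus\Omega^\prime$, proving~\eqref{Stop_cobordism} in $\Omega^\prime$. If in addition $u_0=u_1$ on $\Omega^\prime\setminus\Omega$, then $u$ is independent of $t$ there, so every top-dimensional cell of $(u-y)^{-1}(\X)$ meeting $[0,1]\times(\Omega^\prime\setminus\Omega)$ has the product form $[0,1]\times H^\prime$; its tangent contains $\partial_t$, which is annihilated by $\pi$, so $J\pi=0$ there, the corresponding pushforward vanishes as a flat chain of zero mass, and $\pi_*\S_y(u)$ is supported in $\overline{\Omega}$.

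For~\eqref{Stop_continuity}, the area formula together with the uniform bound $|\gamma(K)|\le C_\X$ gives
\[
 \M\bigl(\pi_*\S_y(u)\mres\overline{\Omega}\bigr) \,\le\, C_\X\sum_K\int_{(u-y)^{-1}(K)\cap[0,1]\times\overline{\Omega}} J\pi\,\d\H^{d+1-k},
\]
the sum running over the $(m-k)$-cells $K$ of $\X$ and $J\pi$ being evaluated on the tangent of $(u-y)^{-1}(K)$. After bilipschitz flattening one may assume each $K$ lies in an affine subspace $V\subset\R^m$ of codimension $k$; writing $y=(y_\perp,y_\parallel)\in V^\perp\times V$ and $v:=p_\perp\circ u$, the preimage $(u-y)^{-1}(V)$ equals $v^{-1}(y_\perp)$ and is independent of $y_\parallel$. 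Since $\S_y(u)=0$ for $|y|$ larger than the $\Lambda$-dependent constant of~\eqref{support-M}, the $y_\parallel$-integration contributes only a bounded factor $C_\Lambda$, and the coarea formula applied to $v$ reduces the whole estimate to the pointwise inequality
\[
 J(\pi|_{\ker\d v})\cdot Jv \,\le\, C_k\,|a|\,|B|^{k-1}, \qquad a:=\partial_t v=p_\perp(u_1-u_0),\quad B:=\nabla_x v=p_\perp\bigl((1-t)\nabla u_0+t\nabla u_1\bigr).
\]

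This sharp Jacobian bound is the hard part. Diagonalising $BB^T$ with eigenvalues $\lambda_1,\dots,\lambda_k\ge 0$ and decomposing $a=\sum a_ie_i$ in the corresponding eigenbasis, one verifies the identity
\[
 \bigl(J(\pi|_{\ker\d v})\cdot Jv\bigr)^2 \,=\, \sum_{i=1}^k a_i^2\prod_{j\ne i}\lambda_j
\]
by combining the matrix-determinant lemma $\det(aa^T+BB^T)=\det(BB^T)\bigl(1+a^T(BB^T)^{-1}a\bigr)$ with an explicit computation of $J(\pi|_{\ker\d v})=|B^+a|/\sqrt{1+|B^+a|^2}$ on a basis of $\ker\d v$ adapted to $(\ker B)^\perp$; the degenerate cases (when $B$ is not of full rank) follow by continuity. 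AM-GM applied to the elementary symmetric polynomial $e_{k-1}(\lambda)=\sum_i\prod_{j\ne i}\lambda_j\le k(|B|^2/k)^{k-1}$ then yields the claimed bound. Substituting $|a|\le|u_1-u_0|$ and $|B|\le(1-t)|\nabla u_0|+t|\nabla u_1|$ and integrating in $t$ via $\int_0^1\bigl((1-t)\alpha+t\beta\bigr)^{k-1}\d t\le C_k(\alpha^{k-1}+\beta^{k-1})$, one arrives at~\eqref{Stop_continuity} after summing over the finitely many cells $K$. The main obstacle is precisely this sharp bound: the naive identification of $J(\pi|_{\ker\d v})\cdot Jv$ with the Jacobian of the graph-type map $(t,x)\mapsto(v(t,x),x)$, or a Hadamard-type estimate on the corresponding $(k+d)\times(d+1)$ matrix, produces an extra $(1+|B|)^{k-1}$ factor and hence a spurious $\int_\Omega|u_1-u_0|\,\d x$ term that cannot be absorbed when $|\nabla u|$ is small.
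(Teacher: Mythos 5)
Your argument is correct and, at the structural level, coincides with the paper's proof: the cobordism identity and the support statement are obtained exactly as in the paper (the relative-cycle argument of Lemma~\ref{lemma:Stop_cycle} applied on the cylinder, plus the observation that over $\Omega^\prime\setminus\overline{\Omega}$ the preimage is a product in the $t$-direction which is collapsed by $\pi$), and the mass bound follows the same reduction: bound the multiplicities $|\gamma(K)|$ by a constant, flatten each $(m-k)$-cell into an affine plane $V$ of codimension $k$, split $y=(y_\perp,\,y_\parallel)$, absorb the $y_\parallel$-integration into a $\Lambda$-dependent constant via \eqref{support-M}, and apply the coarea formula to $v=p_\perp\circ u$. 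The only genuine difference is how the key estimate (the paper's Lemma~\ref{lemma:oriented_coarea}) is established. The paper computes $\M(\pi_*(N_y(v)\mres K))$ by duality against comass-one forms $\pi^*\omega$, which isolates the minors $\det(\partial_\alpha v)$ with $\alpha_1=0$ and bounds each by Hadamard's inequality; you instead use the area formula for the push-forward together with the pointwise identity $\bigl(J(\pi|_{\ker\d v})\,Jv\bigr)^2=\det(\d v\,\d v^{\mathsf T})-\det(BB^{\mathsf T})=\sum_i a_i^2\prod_{j\ne i}\lambda_j$ (matrix determinant lemma combined with $J(\pi|_{\ker\d v})=|B^{+}a|/\sqrt{1+|B^{+}a|^{2}}$, degenerate ranks by continuity), followed by Maclaurin/AM--GM; I checked this identity and it is correct. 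The two derivations are really two formulations of the same fact, since both quantities equal the norm of the part of the Jacobian $k$-form $\d v^1\wedge\cdots\wedge\d v^k$ that contains $\d t$: your route is self-contained linear algebra and avoids the current-theoretic comass computation, while the paper's duality argument is shorter once the language of currents is in place. Your sign and orientation bookkeeping at the two caps in \eqref{Stop_cobordism} is asserted rather than verified, but this is at the same level of detail as the paper itself, which only invokes the argument of Lemma~\ref{lemma:Stop_cycle}; so I see no genuine gap.
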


Once the proposition is proved, in order to show~\eqref{S_boundary} we apply Proposition
\ref{prop:Stop_continuity} with~$u_0$ identically equal to~$0$, 
and notice that $\S_y(0)=0$ for any $y\in\R^m\setminus\X$.
{\BBB Moreover, \eqref{Stop_cobordism} and Lemma~\ref{lemma:relative_flat} imply that
$\F_\Omega(\S_y(u_1) - \S_y(u_0))\leq\M(\pi_*\S_y(u)\mres\Omega)$,
so~\eqref{Stop_continuity} becomes
\[
  \int_{\R^k} \F_\Omega(\S_y(u_1) - \S_y(u_0)) \d y \leq C_\Lambda
  \int_\Omega \abs{u_1 - u_0}\left(\abs{\nabla u_0}^{k-1} + \abs{\nabla u_1}^{k-1}\right)
\]
and \eqref{S_flat} is proved. The continuity of~$\S$ also follows. 
Indeed, if~$(u_j)_{j\in\N}$ is a sequence of smooth maps that converge to~$u$ in~$X$
then, by taking a subsequence such that the $|\nabla u_j|$'s 
are dominated and applying Lebesgue convergence
theorem, we conclude that  $\|\S(u_j) - \S(u)\|_Y\to 0$.}


The proof of Proposition~\ref{prop:Stop_continuity}
is in some sense a refinement of~\eqref{S_mass}.
It will be convenient to work in the setting of differential forms and currents.
We follow here the notation of~\cite[Section~7.4]{ABO1}. 
Given a smooth map $v\colon [0, \, 1]\times\Omega^\prime\to\R^k$, we define the
Jacobian~$\mathrm{J}v$ as the pull-back of the standard volume form on~$\R^k$ through~$v$,
i.e. $\mathrm{J}v := v^*(\d y^1\wedge\ldots\wedge\d y^k)$.
If we denote by~$(x^1, \, \ldots, \, x^d)$ the coordinates on~$\Omega^\prime$
and by~$x^0 = t$ the coordinate in~$[0, \, 1]$, then we can write
\begin{equation} \label{jacobian}
 \mathrm{J} v = \sum_{\alpha\in I(k, \, d)} \det(\partial_{\alpha} v) \, \d x^\alpha,
\end{equation}
where $I(k, d)$ is the set of multi-indices~$\alpha = (\alpha_1, \, \ldots, \, \alpha_k)\in\N^k$
such that~$0 \leq \alpha_1 < \alpha_2 < \ldots < \alpha_k \leq d$, and
\[
 \partial_\alpha v := \left(\partial_{\alpha_1}v, \, \ldots, \, \partial_{\alpha_k}v\right), \qquad
 \d x^\alpha := \d x^{\alpha_1}\wedge\ldots\wedge\d x^{\alpha_{k}}.
\]
For any regular value~$y\in\R^k$ of~$v$ and any~$x\in v^{-1}(y)$, the Hodge 
dual~$\star\mathrm{J}v(x)$ is a simple $(d-k+1)$-vector which spans~$\T_x v^{-1}(y)$.
By~\eqref{jacobian}, we have $|\star\mathrm{J}v|^2 = |\mathrm{J}v|^2 = 
\det((\nabla v)(\nabla v)^{\mathsf{T}})$.
We define~$N_y(v)$ as the rectifiable current 
(in the ambient space~$[0, \, 1]\times\Omega^\prime$) supported by~$v^{-1}(y)$,
with orientation given by $\star\mathrm{J}v/|\star\mathrm{J}v|$ and constant multiplicity~$1$.
$N_y(v)$ can be identified with an element 
of~$\M_{d-k+1}([0, \, 1]\times\overline{\Omega}^\prime; \, \Z)$. 
The mass of~$N_y(v)$, whether it be regarded as a current or as a flat chain 
with coefficients in~$\Z$, coincides with the Hausdorff measure of the set~$N_y(v)$,
because $N_y(v)$ is rectifiable.

\begin{lemma} \label{lemma:oriented_coarea}
 Let~$v_0$, $v_1$ be two smooth maps~$\Omega^\prime\to\R^k$,
 and let $v\colon[0, \, 1]\times\Omega^\prime\to\R^k$ be defined by 
 $v(t, \, x) := (1-t)v_0(x) + tv_1(x)$. Let~$K\csubset[0, \, 1]\times\Omega^\prime$
 be a Borel set. Then, there holds
 \[
  \int_{\R^k} \M\left(\pi_{*}(N_y(v)\mres K)\right) \,\d y \leq C
  \int_{\pi(K)} \abs{v_1 - v_0}\left(\abs{\nabla v_0}^{k-1} + \abs{\nabla v_1}^{k-1}\right)
 \]
\end{lemma}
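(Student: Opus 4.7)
The plan is to combine the coarea formula with an explicit computation of the ``Jacobian'' of the projection $\pi$ restricted to the tangent spaces of the slices~$v^{-1}(y)$. The crucial point is that, thanks to the special affine form of~$v$ in the $t$-variable, the part of~$\star\mathrm{J}v$ that survives the push-forward by~$\pi$ contains a factor of~$\partial_t v = v_1 - v_0$, which produces the desired small quantity.

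Writing coordinates $x^0 = t, \, x^1, \, \ldots, \, x^d$ on~$[0, \, 1]\times\Omega^\prime$, I would first observe that, for a tangent $(d-k+1)$-vector $\tau = \sum_{\beta \in I(d-k+1, d+1)} \tau_\beta \, \partial_{x^\beta}$, the push-forward $\pi_* \tau$ is obtained by dropping all components with $0\in\beta$ (since~$d\pi(\partial_t) = 0$), so
\[
 |\pi_* \tau|^2 = \sum_{\beta \in I(d-k+1, d+1), \, 0\notin\beta} \tau_\beta^2.
\]
Applied to $\tau = \star\mathrm{J}v$ and noting that~$\star$ pairs the multi-index $\alpha \in I(k,d+1)$ with its complement~$\alpha^c$, this means $|\pi_*(\star\mathrm{J}v)|$ retains only the terms of~\eqref{jacobian} with~$0\in\alpha$, i.e.
\[
 \pi_*(\star\mathrm{J}v) = \star\Big( \sum_{\substack{\alpha\in I(k, d) \\ \alpha_1 = 0}} \det(\partial_\alpha v) \,\d x^\alpha \Big),
\]
up to sign. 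For such multi-indices, $\partial_{\alpha_1}v = \partial_t v = v_1 - v_0$, so by multi-linearity of the determinant and the elementary bound $|\partial_{\alpha_j} v| \leq |\nabla v_0| + |\nabla v_1|$, we obtain the pointwise estimate
\[
 |\pi_*(\star\mathrm{J}v)(t,x)| \leq C\, |v_1(x) - v_0(x)| \left(|\nabla v_0(x)|^{k-1} + |\nabla v_1(x)|^{k-1}\right).
\]

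Next, I would use the standard mass estimate for push-forwards of rectifiable currents: since $N_y(v) \mres K$ is carried by $v^{-1}(y) \cap K$ with unit multiplicity and tangent $(d-k+1)$-vector $\star\mathrm{J}v/|\star\mathrm{J}v|$,
\[
 \M\big(\pi_*(N_y(v) \mres K)\big) \leq \int_{v^{-1}(y) \cap K} \frac{|\pi_*(\star\mathrm{J}v)|}{|\star\mathrm{J}v|} \, \d\H^{d-k+1}.
\]
Integrating in $y\in\R^k$ and applying the coarea formula to $v\colon [0, 1]\times\Omega^\prime\to\R^k$ (with $|\mathrm{J}v| = |\star\mathrm{J}v|$ and integrand $f = |\pi_*(\star\mathrm{J}v)|/|\star\mathrm{J}v|$) yields
\[
 \int_{\R^k} \M\big(\pi_*(N_y(v) \mres K)\big) \, \d y \leq \int_K |\pi_*(\star\mathrm{J}v)| \, \d\H^{d+1}.
\]
Plugging in the pointwise bound above and applying Fubini, using that the right-hand side is $t$-independent and that the $t$-slices of~$K$ have $\H^1$-measure at most~$1$, gives the claimed inequality with~$\pi(K)$ as the domain of integration.

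The main technical point is the identification of $\pi_*(\star\mathrm{J}v)$ as the sum of terms with $0\in\alpha$; once this is in place, everything else is a direct application of the coarea formula. A minor subtlety is to ensure the mass inequality for the push-forward of the rectifiable current holds verbatim, which follows from the fact that the multiplicity of $N_y(v)$ is~$1$ and that the differential of~$\pi$ on tangent vectors is controlled by~$|\pi_*(\star\mathrm{J}v)|/|\star\mathrm{J}v|$.
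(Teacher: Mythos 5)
Your proof is correct and follows essentially the same route as the paper: there the mass of $\pi_{*}(N_y(v)\mres K)$ is estimated by testing against comass-one forms $\pi^*\omega$ (which contain no $\d t$), so that only the terms of $\mathrm{J}v$ with $\alpha_1=0$ --- hence carrying the factor $\partial_t v = v_1-v_0$ --- survive, which is exactly the observation you make on the dual side by computing $\pi_{*}(\star\mathrm{J}v)$. The remaining steps (the pointwise bound on those determinants, the coarea formula with factor $|\star\mathrm{J}v|$, and the $t$-independence of the integrand reducing the integral over $K$ to one over $\pi(K)$) coincide with the paper's argument.
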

\begin{proof}
 By definition of the mass of a current, we can write
 \begin{equation} \label{coarea1}
  \M\left(\pi_{*}(N_y(v)\mres K)\right)
  = \sup_\omega \langle N_y(v)\mres K, \, \pi^*\omega\rangle,
 \end{equation}
 where the sup is taken over all smooth $(d-k+1)$-forms~$\omega$ supported in~$\Omega^\prime$,
 such that the \emph{comass} norm~$\|\omega(x)\|\leq 1$ for any~$x\in \Omega^\prime$. 
 (This condition means $\langle\omega(x), \, \xi\rangle\leq 1$ for any unit, 
 \emph{simple} $(d-k+1)$-vector~$\xi$ and any~$x\in\Omega^\prime$).
 Any such form~$\omega$ can be written as
 \begin{equation*}
  \omega = \sum_{\beta\in I(d-k+1, \, d)\colon \beta_1 > 0} \omega_\beta \, \d x^\beta
 \end{equation*}
 for some functions $\omega_\beta\in C^\infty_{\mathrm{c}}(\Omega^\prime)$ which satisfy
 \begin{equation} \label{coarea2}
  \sum_{\beta} \omega^2_\beta(x) = |\omega(x)|^2 \leq C \|\omega(x)\|^2 
  \leq C \qquad \textrm{for any } x\in\Omega^\prime
 \end{equation}
 where~$C = C(d, \, k)$ is a positive constant. 
 Using the properties of~$\star$ and~\eqref{jacobian}, we compute
 \begin{equation*} 
  \begin{split}
   \langle N_y(v)\mres K, \, \pi^*\omega\rangle
   &= \int_{v^{-1}(y) \cap K} \langle \pi^*\omega, \, \frac{\star\mathrm{J} v}{|\star\mathrm{J} v|}  \rangle \,\d\H^{d-k+1} \\
   &= (-1)^{k(d-k+1)}\int_{v^{-1}(y) \cap K} \frac{\star\left(\pi^*\omega\wedge\mathrm{J} v \right)}{|\star\mathrm{J} v|} \,\d\H^{d-k+1} \\
   &\leq \sum_{\alpha\in I(k, \, d)\colon \alpha_1 = 0}\int_{v^{-1}(y)\cap K}
   |\omega_{\bar{\alpha}}| \frac{|\det(\partial_{\alpha}v)|}{|\star\mathrm{J} v|} \, \d\H^{d-k+1},
  \end{split}
 \end{equation*}
 where~$\bar{\alpha}$ denotes the unique element of~$I(d-k+1, \, d)$ that complements~$\alpha$.
 Recalling the definition of~$v$, for any~$\alpha\in I(k, \, d)$ such that~$\alpha_1 = 0$ we obtain
 \[
  |\det(\partial_\alpha v)| \leq \abs{\partial_t v} \abs{\nabla_x v}^{k-1}
  \leq |v_0 - v_1| \left(\abs{\nabla v_0} + \abs{\nabla v_1}\right)^{k-1}.
 \]
 Then, using~\eqref{coarea1} and~\eqref{coarea2} as well, we have
 \begin{equation*} 
  \M\left(\pi_{*}(N_y(v)\mres K)\right) \leq C
  \int_{v^{-1}(y)\cap K} \frac{|v_0 - v_1| \left(\abs{\nabla v_0}
  + \abs{\nabla v_1}\right)^{k-1}}{|\star\mathrm{J} v|} \,\d\H^{d-k+1}.
 \end{equation*}
 By integrating this inequality with respect to~$y\in\R^k$, and applying the coarea formula,
 we conclude that
 \[
  \int_{\R^k} \M\left(\pi_{*}(N_y(v)\mres K)\right) \d y \leq C
  \int_{K}  |v_0 - v_1| \left(\abs{\nabla v_0} + \abs{\nabla v_1}\right)^{k-1} \, \d\H^{d+1}
 \]
 whence the lemma follows.
\end{proof}

\begin{proof}[Proof of Proposition~\ref{prop:Stop_continuity}]
 We first prove~\eqref{Stop_cobordism}. Pick~$y\in\R^k$ such that~$u_0 - y$, $u_1 - y$ 
 and~$u - y$, together with their restrictions to~$\partial\Omega$,
 are transverse to all the cells of~$\X$. Then, up to subdivision,
 we can assume that all the cells of~$(u - y)^{-1}(\X)$
 are contained either in~$\{0, \, 1\}\times\Omega$ or in~$(0, \, 1)\times\Omega^\prime$.
 Then, \eqref{Stop_cobordism} follows by the same argument of Lemma~\ref{lemma:Stop_cycle}.
 In case~$u_0 = u_1$ out of~$\Omega$, we have $u(t, \, x) = u_0(x)$ 
 for any~$(t, \, x)\in [0, \, 1]\times(\Omega^\prime\setminus\overline\Omega)$, so
 \[
  \S_y(u)\mres([0, \, 1]\times(\Omega^\prime\setminus\overline\Omega)) 
  = \llbracket [0, \, 1] \rrbracket \times \S_y(u_0)\mres(\Omega^\prime\setminus\overline\Omega)
 \]
 and
 \[
  \pi_{*}\S_y(u)\mres(\Omega^\prime\setminus\overline{\Omega})
  = \pi_{*}\left(\S_y(u)\mres([0, \, 1]\times(\Omega^\prime\setminus\overline\Omega)) \right) 
  = \pi_{*}\llbracket [0, \, 1] \rrbracket \times \S_y(u_0)\mres(\Omega^\prime\setminus\overline\Omega) = 0.
 \]
 Thus, $\pi_{*}\S_y(u)$ is supported in $\overline\Omega$.
 
 We now prove~\eqref{Stop_continuity}. Fix~$\Lambda > 0$ such that 
 $\|u_0\|_{L^\infty(\Omega^\prime)}\leq\Lambda$ and
 $\|u_1\|_{L^\infty(\Omega^\prime)}\leq\Lambda$. Then, we have
 $\|u\|_{L^\infty([0, \, 1]\times\Omega^\prime)}\leq\Lambda$
 and so $\S_y(u_0) = \S_y(u_1) = 0$ whenever
 \begin{equation*} 
  |y| > M :=  \Lambda + \sup_{z\in \X} |z|.
 \end{equation*}
 If we choose a constant $C$ such that~$|\gamma(K)|\leq C$ for any $(m-k)$-cell $K$ of~$\X$, then
 the definition~\eqref{Stop} of~$\S_y(u)$ implies
 \begin{equation} \label{Stop_continuity2}
  \begin{split}
   \M\left(\pi_{*}\S_y(u)\mres\overline{\Omega}\right) &\leq C 
   \sum_{K} \M\left(\pi_{*}\llbracket (u-y)^{-1}(K)\rrbracket
   \mres\overline{\Omega}\right) \! .
  \end{split}
 \end{equation}
 Fix a $(m-k)$-cell $K$ of~$\X$. By composing with a diffeomorphism, we can assume
 without loss of generality that~$K$ is an affine polyhedron
 contained in the $(m-k)$-plane $V:=\{y\in\R^m\colon y_1=\ldots= y_k=0\}$.
 We denote the variable in~$\R^m$ by~$y = (z, \, z^\prime)\in V^\perp\times V$, and we
 let~$p$, $p_\perp$ be the orthogonal projections onto~$V$, $V^\perp$ respectively.
 For a suitable choice of the orientation of~$K$, we have
 \begin{equation*} 
  \llbracket (u-y)^{-1}(K)\rrbracket = 
  N_{p_\perp(y)}(p_\perp\circ u)\mres \left((p\circ u -p(y))^{-1}(K)\right).
 \end{equation*}
 Thus, for any~$z^\prime\in V$, by applying Lemma~\ref{lemma:oriented_coarea}
 to~$v := p_\perp\circ u$ and
 \[
  K_{z^\prime} :=(p\circ u - z^\prime)^{-1}(K) 
  \cap \left([0, \, 1]\times\overline{\Omega}\right) \! ,
 \]
 we obtain
 \begin{equation*}
  \begin{split}
   \int_{V^\perp}  \! \M\left(\pi_{*}\llbracket (u-(z, \, z^\prime))^{-1}(K)\rrbracket
   \mres\overline{\Omega}\right) \d z &=
   \int_{V^\perp}  \M\left(\pi_{*}(N_{z}(v)\mres K_{z^\prime})\right) \,\d z \\
   &\leq C \int_{\Omega} \abs{u_0 - u_1} 
   \left(\abs{\nabla u_0}^{k-1} + \abs{\nabla u_1}^{k-1}\right) \! .
  \end{split}
 \end{equation*}
 By integrating with respect to~$z^\prime\in V\cap B^m_M$, summing over~$K$,
 and using~\eqref{Stop_continuity2}, we obtain
 \begin{equation*} \label{Stop_continuity3}
  \int_{V^\perp\times (V\cap B^m_M)} \M\left(\pi_{*}\S_{y}(u)\mres\overline{\Omega}\right) \d y
  \leq C \int_{\Omega} \abs{u_0 - u_1} 
   \left(\abs{\nabla u_0}^{k-1} + \abs{\nabla u_1}^{k-1}\right)
 \end{equation*}
 for some constant~$C$ depending on~$M$ (hence on~$\Lambda$) and on~$\X$.
 Now, reminding that~$\S_y(u) = 0$ if~$|y|>M$, the proposition follows.
\end{proof}

\subsection{The case of Sobolev maps}
\label{subsect:Sobolev}

In the previous section, we have defined $\S_y(u)$ in case~$u$ is smooth;
we now have to extend the definition to the case $u$ belongs to a suitable Sobolev space,
and of course this is accomplished by a density argument. 
We will then provide the proof of the main theorem, Theorem~\ref{th:Stop}, 
and of Proposition~\ref{prop:boundary}.

Since~$\Omega$ is assumed to be bounded and smooth, there exist a larger
domain $\Omega^\prime\supset\!\supset\Omega$ and a linear, continuous operator
$E\colon X := (L^\infty\cap W^{1,k-1})(\Omega, \, \R^m)
\to (L^\infty\cap W^{1,k-1})(\Omega^\prime, \R^m)$ that satisfies $Eu_{|\Omega} = u$ and
\begin{equation} \label{extension_op}
   \int_{\Omega^\prime} \abs{Eu_1 - Eu_0}\left(\abs{\nabla (Eu_0)}^{k-1}
   + \abs{\nabla (Eu_1)}^{k-1}\right) \leq 
   C\int_{\Omega} \abs{u_1 - u_0}\left(\abs{\nabla u_0}^{k-1}
   + \abs{\nabla u_1}^{k-1}\right)
\end{equation}
for any~$u\in X$ and some constant~$C$ that only depends on~$\Omega$.
Such an operator can be constructed, e.g., by standard reflection about
the boundary~$\partial\Omega$.

Let $\Psi\colon\F_{d-k}(\Omega^\prime; \, \pi_{k-1}(\NN))\to\F_{d-k}(\Omega; \, \pi_{k-1}(\NN))$
be the restriction map given by Lem\-ma~\ref{lemma:flat_restriction}.
For any~$u\in E^{-1}C^\infty(\Omega^{\prime}, \, \R^m)$ and any~$y\in\R^m$, with a slight
abuse of notation, we let~$\S_y(u) := \Psi(\S_y(E u)) = \S_y(E u)\mres\Omega$.
By Proposition~\ref{prop:Stop_continuity} and~\eqref{extension_op}, this defines a uniformly
continuous operator $\S\colon E^{-1}C^\infty(\Omega^{\prime}, \, \R^m)\to Y$,
if $E^{-1}C^\infty(\Omega^{\prime}, \, \R^m)$ is given the topology of a subspace of~$X$.
Since $E^{-1}C^\infty(\Omega^{\prime}, \, \R^m)$ is dense in~$X$, we can extend~$\S$
to a continuous operator $X\to Y$, still denoted~$\S$, that safisfies~\eqref{S_flat}.
Now, before completing the proof of Theorem~\ref{th:Stop}, 
we state a useful lemma. 

\begin{lemma} \label{lemma:Stop-delta}
 Let~$\delta_0 := \dist(\NN,\, \X)$. For any smooth map
 $u\colon\Omega^\prime\to\R^m$ and a.e.~$y$, $y^\prime\in\R^m$ 
 with~$|y^\prime|<\delta_0$, there holds $\S_y(u) = \S_{y^\prime}(\RR\circ(u - y))$.
\end{lemma}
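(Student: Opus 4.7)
The plan is to approximate $\tilde u := \RR\circ(u-y)$ by smooth $\R^m$-valued maps, apply~\eqref{S_intersection} (already known for smooth maps) together with a small homotopy on~$\NN$ to match intersection indices, and then conclude by the $Y$-continuity of~$\S$ and a rigidity argument on the $(d-k)$-complex $\Sigma:=(u-y)^{-1}(\X)\cap\overline{\Omega}$.

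First, I would fix $y\in\R^m$ so generic that $u-y$ is transverse to every cell of~$\X$, making~$\Sigma$ a smooth $(d-k)$-complex. I would then smooth the retraction by constructing smooth maps $\RR_\varepsilon\colon\R^m\to\R^m$ that agree with~$\RR$ outside the tube $\X_\varepsilon:=\{z\in\R^m\colon\dist(z,\X)<\varepsilon\}$, are uniformly bounded, and satisfy $\RR_\varepsilon\to\RR$ in $W^{1,k-1}_{\mathrm{loc}}(\R^m)$; such an approximation exists because $|\nabla\RR(z)|\le C\,\dist(z,\X)^{-1}$ from Lemma~\ref{lemma:X} is locally $L^{k-1}$-integrable (as $\X$ has codimension~$k$). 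The maps $v_\varepsilon:=\RR_\varepsilon\circ(u-y)$ are smooth, and by the chain rule together with dominated convergence they converge to~$\tilde u$ in~$X$.

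The key geometric observation is that for $|y'|<\delta_0$ one has $\RR^{-1}(\X+y')=\emptyset$, since $\RR$ takes values in~$\NN$ and $\dist(\NN,\X+y')\ge\delta_0-|y'|>0$; hence $(v_\varepsilon-y')^{-1}(\X)\subseteq(u-y)^{-1}(\X_\varepsilon)$, a tube around~$\Sigma$ that shrinks to~$\Sigma$ as $\varepsilon\to 0$, so $\spt\S_{y'}(v_\varepsilon)$ lies inside this shrinking tube. For each $(d-k)$-cell~$H$ of~$\Sigma$, I would pick a base point $x_0\in H\setminus\partial H$ and a small transverse $k$-disk $\Sigma^H\subset\Omega$ with $\Sigma^H\cap\Sigma=\{x_0\}$; for $\varepsilon$ small enough, $\partial\Sigma^H$ lies outside $(u-y)^{-1}(\X_\varepsilon)$, so $v_\varepsilon=\tilde u$ on $\partial\Sigma^H$, and $\tilde u-y'$ never reaches~$\X$ since $\tilde u$ is $\NN$-valued. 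Applying~\eqref{S_intersection} to the smooth map $v_\varepsilon$ and using the homotopy $t\in[0,1]\mapsto\RR\circ(\tilde u-ty')\colon\partial\Sigma^H\to\NN$, which is well-defined thanks to $|y'|<\delta_0$ and interpolates between $\tilde u$ (at $t=0$, as $\RR$ fixes~$\NN$) and $\RR\circ(\tilde u-y')$ (at $t=1$), I obtain
\[
\I(\S_{y'}(v_\varepsilon),\Sigma^H)=\left[\RR\circ(v_\varepsilon-y')_*(\partial\Sigma^H)\right]=\left[\tilde u_*(\partial\Sigma^H)\right]=\I(\S_y(u),\Sigma^H),
\]
the last equality being~\eqref{S_intersection} for the smooth map~$u$.

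By $Y$-continuity of~$\S$, up to a subsequence $\F_\Omega(\S_{y'}(v_\varepsilon)-\S_{y'}(\tilde u))\to 0$ for a.e.~$y'$ with $|y'|<\delta_0$, and Lemma~\ref{lemma:intersection_product}(iii) upgrades the identity to $\I(\S_{y'}(\tilde u),\Sigma^H)=\I(\S_y(u),\Sigma^H)$ for every cell~$H$ of~$\Sigma$. The main obstacle will be the rigidity step that remains: deducing the identity $\S_{y'}(\tilde u)=\S_y(u)$ in $\F_{d-k}(\Omega;\pi_{k-1}(\NN))$ from equality of intersection indices at one transverse slice per cell. For this I would argue that $\spt\S_{y'}(\tilde u)\subseteq\Sigma$, because $\F_\Omega$-limits of chains with supports contained in a decreasing family of closed sets remain supported in the intersection; consequently $\S_{y'}(\tilde u)-\S_y(u)$ is a relative $(d-k)$-cycle in~$\Omega$ supported on the $(d-k)$-complex~$\Sigma$, hence has constant multiplicity on each open cell of~$\Sigma$, and the computed intersection indices with~$\Sigma^H$ force each such multiplicity to vanish.
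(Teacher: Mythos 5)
Your argument is correct in substance, but it takes a different route from the paper. The paper disposes of this lemma in two lines by reducing it to Lemma~\ref{lemma:Stop_PR}: for a.e.~$y$ the map $\RR\circ(u-y)$ lies in $R^0_{k-1}(\Omega,\NN)$, and Lemma~\ref{lemma:Stop_PR} (whose proof, as noted in Remark~\ref{remark:PR_Th}, only uses the smooth case of~\eqref{S_intersection} and the continuity of~$\S$) identifies $\S_{y'}$ of any such $\NN$-valued map with $\S^{\mathrm{PR}}$, which by its very definition~\eqref{Stop_PR} coincides with~$\S_y(u)$. You instead bypass $\S^{\mathrm{PR}}$ entirely and compare $\S_{y'}(\RR\circ(u-y))$ with $\S_y(u)$ directly; moreover, where the paper approximates the $\NN$-valued map by mollification in the domain ($u*\rho_\varepsilon$), you smooth the retraction in the target ($\RR_\varepsilon\circ(u-y)$). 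The remaining ingredients --- localisation of the supports to a shrinking tube around $(u-y)^{-1}(\X)$, stability of the intersection index under $\F_\Omega$-convergence (Lemma~\ref{lemma:intersection_product}), the homotopy $t\mapsto\RR(\cdot-ty')$ on~$\NN$ for $|y'|<\delta_0$, and the constancy-plus-skeleton rigidity argument --- are exactly those of the paper's proof of Lemma~\ref{lemma:Stop_PR}, so your proof is essentially a merged, self-contained version of that two-step argument, specialised to $\tilde u=\RR\circ(u-y)$. What it buys is independence from the Pakzad--Rivi\`ere class $R^0_{k-1}$ and from the reflection/extension step; what it loses is the reusable statement $\S_{y'}=\S^{\mathrm{PR}}$ on $\NN$-valued maps, which the paper needs again later (Proposition~\ref{prop:N-valued}, Theorem~\ref{th:PR}).

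One step should be made precise: convergence $\RR_\varepsilon\to\RR$ in $W^{1,k-1}_{\mathrm{loc}}(\R^m)$ does \emph{not} by itself give $v_\varepsilon=\RR_\varepsilon\circ(u-y)\to\RR\circ(u-y)$ in $W^{1,k-1}(\Omega)$ for a fixed $y$, because the composition is not continuous under Sobolev convergence of the outer map. You need a uniform pointwise bound, e.g.\ build $\RR_\varepsilon$ by cutting off $\RR$ on the tube $\X_\varepsilon$ so that $|\nabla\RR_\varepsilon|\leq C\dist(\cdot,\X)^{-1}$ with $C$ independent of~$\varepsilon$; then $|\nabla v_\varepsilon-\nabla\tilde u|^{k-1}$ is supported in $(u-y)^{-1}(\X_\varepsilon)$ and dominated by $C\dist(u-y,\X)^{-(k-1)}|\nabla u|^{k-1}$, which is integrable for a.e.~$y$ by the averaging argument of Lemma~\ref{lemma:projection_cont}, and dominated convergence applies. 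With this fix (and the usual bookkeeping that all the a.e.\ conditions in $y'$ can be met simultaneously along a sequence $\varepsilon_j\to0$), your proof goes through.
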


For the sake of convenience of exposition, we leave the proof of  
Lemma~\ref{lemma:Stop-delta} to Section~\ref{subsect:Sobolev-NN}.
By the continuity of~$\S$, and because $\RR\circ(u_j - y)\to\RR\circ(u - y)$
in~$W^{1, k-1}$ for a.e.~$y\in\R^m$ if~$u_j\to u$ in~$X$
(Lemma~\ref{lemma:projection_cont}), from Lemma~\ref{lemma:Stop-delta}
we derive

\begin{lemma} \label{lemma:Stop-delta-general}
 As above, let~$\delta_0 := \dist(\NN, \, \X)$. For any
 $u\in X$ and a.e.~$y$, $y^\prime\in\R^m$ 
 with~$|y^\prime|<\delta_0$, there holds $\S_y(u) = \S_{y^\prime}(\RR\circ(u - y))$.
\end{lemma}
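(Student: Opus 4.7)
The plan is to derive the Sobolev case from the smooth case (Lemma~\ref{lemma:Stop-delta}) by a density argument that combines the continuity of $\S\colon X\to Y$ (established in Section~\ref{subsect:Sobolev}) with the continuity of the reprojection operator from Lemma~\ref{lemma:projection_cont}.

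First I would fix $u\in X$ and choose a sequence $(u_j)_{j\in\N}$ of smooth maps with $u_j\to u$ in $X$, for example by extending $u$ via $E$ and mollifying. For each $j$, Lemma~\ref{lemma:Stop-delta} provides
\[
\S_y(u_j)=\S_{y'}(\RR\circ(u_j-y)) \qquad \text{for a.e. } (y,y')\in\R^m\times B^m_{\delta_0}.
\]
Equivalently, for a.e. $y\in\R^m$ the $\F_\Omega$-valued function $y'\mapsto \S_{y'}(\RR\circ(u_j-y))$ is a.e. constant on $B^m_{\delta_0}$, with constant value $\S_y(u_j)$.

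Next I would pass to the limit $j\to+\infty$. Continuity of $\S$ gives $\|\S(u_j)-\S(u)\|_Y\to 0$, so up to a subsequence $\S_y(u_j)\to\S_y(u)$ in $\F_\Omega$ for a.e.\ $y$. On the other hand, Lemma~\ref{lemma:projection_cont}, together with the uniform bound $\|\RR\circ(u_j-y)\|_{L^\infty(\Omega)}\leq\sup_{z\in\NN}|z|$ (valid since $\NN$ is compact), implies that $y\mapsto \RR\circ(u_j-y)$ converges to $y\mapsto\RR\circ(u-y)$ in $L^1_{\mathrm{loc}}(\R^m, W^{1,k-1}(\Omega,\NN))$. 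Extracting a further subsequence (not relabelled), $\RR\circ(u_j-y)\to\RR\circ(u-y)$ in the topology of $X$ for a.e.\ $y\in\R^m$, and then continuity of $\S$ yields $\S_{y'}(\RR\circ(u_j-y))\to\S_{y'}(\RR\circ(u-y))$ in $L^1(B^m_{\delta_0},\F_\Omega)$ for each such $y$.

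To conclude, fix any $y$ in the full-measure set on which both limits hold. As elements of $L^1(B^m_{\delta_0},\F_\Omega)$, each approximant $y'\mapsto\S_{y'}(\RR\circ(u_j-y))$ is the constant function with value $\S_y(u_j)$. Constant functions form a closed subspace of $L^1(B^m_{\delta_0},\F_\Omega)$ (isomorphic to $\F_\Omega$), hence the $L^1$-limit $y'\mapsto\S_{y'}(\RR\circ(u-y))$ is also a.e.\ constant, and its value equals $\lim_j\S_y(u_j)=\S_y(u)$. This is exactly the statement of the lemma. The only technical subtlety is the iterated subsequence extraction, which is handled by standard Fubini and diagonal arguments so that all the ``a.e.'' convergences hold simultaneously.
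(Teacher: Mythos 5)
Your argument is correct and is essentially the paper's own derivation: the paper obtains this lemma from Lemma~\ref{lemma:Stop-delta} precisely by combining the continuity of $\S\colon X\to Y$ with the fact (Lemma~\ref{lemma:projection_cont}) that $\RR\circ(u_j-y)\to\RR\circ(u-y)$ in $W^{1,k-1}$ for a.e.\ $y$ when $u_j\to u$ in $X$, together with the uniform $L^\infty$ bound coming from the $\NN$-valuedness of the reprojections. Your write-up simply makes explicit the routine details (joint subsequence extraction, a.e.\ constancy in $y'$, and closedness of constants under $L^1(B^m_{\delta_0},\F_\Omega)$-convergence) that the paper leaves implicit.
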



\begin{proof}[Proof of Theorem~\ref{th:Stop}]
 We already know, by Proposition~\ref{prop:Stop_continuity} and~\eqref{extension_op},
 that $\S$ satisfies~\eqref{S_flat}; we need to check that it also satisfies
 \eqref{S_intersection}--\eqref{S_mass}. Properties~\eqref{S_boundary} and~\eqref{S_mass} 
 follow by a density argument, since we 
 have already established that they hold for smooth maps,
 using the $\F_\Omega$-lower semi-continuity of the mass,
 Lemma~\ref{lemma:relative_lsc_mass}. Property~\eqref{S_spt} can be proved by a 
 ``removal of the singularity'' technique, exactly as in~\cite[Theorem~II]{PakzadRiviere}.
 
 We now check~\eqref{S_intersection}. For fixed $u\in X$ and~$y\in\R^m$,
 take a chain~$R\in\N_{k}(\R^d; \, \Z)$ such that~$\spt R\subseteq\Omega$ and
 $\spt(\partial R)\cap\spt(\S_y(u)) =\emptyset$.
 Let~$U$ be an open neighbourhood of~$\spt(\partial R)$ such that 
 $U\cap\spt(\S_y(u))=\emptyset$.
 Taking a smaller~$U$ if necessary, we can assume that~$U$ retracts 
 by deformation over~$\spt(\partial R)$.
 Moreover, we can assume without loss of generality that~$\partial R$
 is polyhedral. Indeed, due to the Deformation Theorem~\cite[Theorem~7.3]{Fleming},
 there is a $k$-chain of finite mass~$\tilde{R}$, supported in~$U$, such that
 $\partial\tilde{R} - \partial R$ is polyhedral. By Lemma~\ref{lemma:intersection_product},
 and because $\spt\tilde{R}\subseteq U\subseteq\R^d\setminus\spt(\S_y(u))$,
 we have $\I(\S_y(u), \, \tilde{R})=0$, so we may redefine $R := R - \tilde{R}$.

 Under these conditions, we can apply~\eqref{S_spt} and
 Lemma~\ref{lemma:H1,p_loc} to deduce that $\RR\circ(u-y)\in H^{1,k-1}(U, \, \NN)$.
 As a consequence, we find {\BBB a sequence of open sets~$U^\prime_j$, with
 $\spt(\partial R)\subseteq U^\prime_j\csubset U$,
 and a sequence~$w_j\in C^\infty(\Omega^\prime, \, \R^m)$
 such that~$w_j(x)\in\NN$ for any~$x\in U^\prime_j$ and any~$j$,}
 and~$w_j\to\RR\circ(u-y)$ in~$X$. 
 Thus, for a.e.~$y^\prime\in\R^m$ with~$|y^\prime| < \dist(\NN, \, \X)$ we have
 $\spt(\S_{y^\prime}(w_j))\cap U^\prime_j = \emptyset$ and we can apply~\eqref{S_intersection} 
 to~$w_j$, because we have already proved~\eqref{S_intersection} for smooth maps.
 This gives
 \begin{equation} \label{S_inter1}
  \I(\S_{y^\prime}(w_j), \, R) = [\RR\circ(w_j - y^\prime)_*(\partial R)].
 \end{equation}
 Since~$w_j\to\RR\circ(u-y)$ in~$X$, using Lemma~\ref{lemma:homotopy}
 we see that 
 \begin{equation} \label{S_inter2}
  [\RR\circ(w_j - y^\prime)_*(\partial R)] = 
  [\RR\circ(\RR\circ(u - y) - y^\prime)_*(\partial R)] = [\RR\circ(u - y)_*(\partial R)],
 \end{equation}
 for~$j$ large enough and a.e.~$y$, $y^\prime$ with $|y^\prime|<\dist(\NN, \, \X)$.
 The latter identity holds because the map~$z\in\NN\mapsto\RR(z - y^\prime)$
 is homotopic to the identity on~$\NN$ (a homotopy is given
 by $(z, \, t)\in\NN\times[0, \, 1]\mapsto\RR(z - ty^\prime)$).
 As for the left-hand side of~\eqref{S_inter1}, we use again
 that~$w_j\to\RR(u-y)$ in~$X$, the continuity of~$\S$,
 Lemmas~\ref{lemma:intersection_product}
 and~\ref{lemma:Stop-delta-general} to obtain that
 \begin{equation} \label{S_inter3}
  \I(\S_{y^\prime}(w_j), \, R) = \I(\S_{y^\prime}(\RR(u-y)), \, R)
  = \I(\S_y(u), \, R)
 \end{equation}
 for~a.e. $y$, $y^\prime$, provided that $j$ is large enough
 and~$|y^\prime|$ is sufficiently small. Then, \eqref{S_intersection}
 follows from~\eqref{S_inter1}, \eqref{S_inter2} and~\eqref{S_inter3}.
 
 Finally, we prove the uniqueness part of the theorem.
 Let~$\S^\prime\colon X\to Y$ be a continuous operator that 
 satisfies~\eqref{S_intersection}, and let~$u\in C^\infty(\Omega^\prime, \, \R^m)$.
 Let~$y\in\R^m$ be such that~$u - y$ intersects transversly each cell of~$\X$, 
 and let~$B\csubset\Omega\setminus (u - y)^{-1}(\X)$ be a ball.
 Since~$\RR(u - y)$ is well-defined and smooth on~$B$,
 by~\eqref{S_intersection} we have
 \[
  \I(\S^\prime_y(u), \, R) = [\RR(u-y)_*(\partial R)] = 0
 \]
 for any $k$-disk~$R$ supported in~$B$ such that
 $\spt(\partial R)\cap\spt(\S^\prime_y(u))=\emptyset$. 
 By approximating~$\S^\prime_y(u)$ with polyhedral chains, 
 using the deformation theorem as stated in~\cite[Theorem~1.1]{White-Deformation}
 together with~\cite[Proposition~2.2]{White-Deformation}, we deduce that 
 $\S^\prime_y(u)\mres B = 0$, hence $\spt(\S^\prime_y(u))\subseteq (u - y)^{-1}(\X)$.
 However, using again~\eqref{S_intersection}, we see that the
 multiplicity of~$\S_y(u)$ and~$\S^\prime_y(u)$ must agree on~$(u - y)^{-1}(K)$,
 for any~$(m-k)$-open cell~$K$ of~$\X$. Thus, $\S_y(u) - \S^\prime_y(u)$ 
 must be supported on the $(d-k-1)$-skeleton of~$(u - y)^{-1}(\X)$, and
 thus $\S_y(u) = \S^\prime_y(u)$ because no non-trivial 
 $(d-k)$-chain can be supported on a $(d-k-1)$-dimensional set
 \cite[Theorem~3.1]{White-Deformation}. We have shown that
 $\S^\prime$ agrees with~$\S$ on smooth maps, and by continuity of~$\S^\prime$, 
 we must have $\S^\prime = \S$.
\end{proof} 

We now turn to the study of~$\S^{\mathrm{bd}}$.
Suppose that $d\geq k+1$, and let~$\Omega^\prime\supset\!\supset\Omega$ 
be an open set. For
$g\in X^{\mathrm{bd}} := (L^\infty\cap W^{1-1/k, k})(\partial\Omega, \, \R^m)$,
take a map~$u\in (L^\infty \cap W^{1, k})(\Omega^\prime, \, \R^m)$
that satisfies~$u_{|\partial\Omega} = g$ in the sense of traces.
Since~$\S_y(u)\in\F_{d-k}(\Omega^\prime; \, \pi_{k-1}(\NN))$ has 
finite mass for a.e.~$y$, due to~\eqref{S_mass}, 
the restriction $\S_y(u)\mres\Omega$ is well-defined, for a.e.~$y$.
Let~$\S^{\mathrm{bd}}_y(g) := \partial(\S_y(u)\mres\Omega)$.

\begin{proof}[Proof of Proposition~\ref{prop:boundary}]
 By construction, $\S^{\mathrm{bd}}_y(g)$ is supported
 in~$\overline{\Omega}$. On the other hand, 
 by noting that $\S_y(u)$ has no boundary inside~$\Omega^\prime$
 due to~\eqref{S_boundary}, we see that
 \begin{equation} \label{bd1}
  \S^{\mathrm{bd}}_y(g) = -\partial(\S_y(u) - \S_y(u)\mres\Omega) 
  = -\partial(\S_y(u)\mres(\R^d\setminus\Omega))
 \end{equation}
 is supported in~$\R^d\setminus\Omega$. Thus,
 $\S^{\mathrm{bd}}_y(g)\in \F_{d-k-1}(\partial\Omega; \,\, \pi_{k-1}(\NN))$
 for a.e.~$y$. In fact, the map $y\mapsto\S^{\mathrm{bd}}_y(g)$ belongs to
 $Y^{\mathrm{bd}} := L^1(\R^m, \, \F_{d-k-1}(\partial\Omega; \,\, \pi_{k-1}(\NN)))$,
 because $\F(\S^{\mathrm{bd}}_y(g)) \leq \M(\S_y(u))$ by~\eqref{flat-mass}
 and the integral of~$\M(\S_y(u))$ with respect to~$y$ is finite, due to~\eqref{S_mass}.
 We now claim that
 \begin{equation} \label{bd2}
  \S_y(u)\mres\partial\Omega = 0 \qquad \textrm{for a.e. } y\in\R^m.
 \end{equation}
 Indeed, for~$\rho\in (0, \, \dist(\Omega, \, \partial\Omega^\prime)$,
 let~$\Gamma_\rho := \{x\in\R^d\colon \dist(x, \, \partial\Omega)<\rho\}$.
 Thanks to~\eqref{S_mass} and the locality of~$\S$ 
 (Corollary~\ref{cor:locality}), we have
 \[
  \int_{\R^d} \M(\S_y(u)\mres\partial\Omega) \,\d y \leq
  \int_{\R^d} \M(\S_y(u)\mres\Gamma_\rho) \,\d y \leq C\norm{\nabla u}^k_{L^k(\Gamma_\rho)}
 \]
 and the right-hand side tends to zero as~$\rho\to 0$, so~\eqref{bd2} follows.
 As a consequence of~\eqref{bd2}, we have $\S_y(u)\mres\Omega = \S_y(u)\mres\overline{\Omega}$
 for a.e.~$y$.
 
 We check that~$\S^{\mathrm{bd}}_y(g)$ is independent of the choice of~$u$.
 Let~$u_1$, $u_2$ be two maps in $(L^\infty \cap W^{1, k})(\Omega^\prime, \, \R^m)$
 such that $u_1 = u_2 = g$ on~$\partial\Omega$ in the sense of traces.
 Define the map $u_*$ by
 \[
  u_* := \begin{cases}
          u_1 & \textrm{on } \Omega \\
          u_2 & \textrm{on } \R^d\setminus\Omega,
         \end{cases}
 \]
 and note that $u_*\in (L^\infty \cap W^{1, k})(\Omega^\prime, \, \R^m)$.
 By the locality of the operator~$\S$ (Corollary~\ref{cor:locality}), we have
 $\S_y(u_*)\mres\Omega = \S_y(u_1)\mres\Omega$, 
 $\S_y(u_*)\mres(\R^d\setminus\overline{\Omega}) = 
 \S_y(u_2)\mres(\R^d\setminus\overline{\Omega})$  and hence 
 \[
  \begin{split}
  \partial(\S_y(u_1)\mres\Omega) &= \partial(\S_y(u_*)\mres\Omega)
  \stackrel{\eqref{bd1}-\eqref{bd2}}{=} 
   -\partial(\S_y(u_*)\mres(\R^d\setminus\overline\Omega)) \\
  &= -\partial(\S_y(u_2)\mres(\R^d\setminus\overline\Omega))  
  \stackrel{\eqref{bd1}-\eqref{bd2}}{=} \partial(\S_y(u_2)\mres\Omega).
  \end{split}
 \]
 
 It only remains to prove the sequential continuity of~$\S^{\mathrm{bd}}$.
 Let~$(g_j)_{j\in\N}$ be a sequence that converges to~$g$ weakly
 in~$W^{1-1/k, k}(\partial\Omega, \, \R^m)$, and suppose that 
 $\Lambda := \sup_j\|g_j\|_{L^\infty(\partial\Omega)} <+\infty$. By
 Rellich-Kondrakov theorem, we know that $g_j\to g$
 strongly in~$L^k(\Omega, \, \R^m)$.
 We can find an open set $\Omega^\prime\supset\!\supset\Omega$
 and functions $u_j$, $u\in W^{1, k}(\Omega^\prime, \, \R^m)$
 such that $u_{j|\partial\Omega} = g_j$, $u_{|\partial\Omega} = g$
 in the sense of traces, and
 \begin{gather}
   \norm{u_j - u}_{L^k(\Omega^\prime)} 
      \leq C\norm{g_j - g}_{L^{k}(\partial\Omega)}, \label{bd3} \\
   \norm{\nabla u_j}_{L^{k}(\Omega^\prime)} 
      \leq C\norm{g_j}_{W^{1 - 1/k, k}(\partial\Omega)}, \qquad
   \norm{\nabla u}_{L^{k}(\Omega^\prime)} 
      \leq C\norm{g}_{W^{1 - 1/k, k}(\partial\Omega)}. \label{bd4}
 \end{gather}
 By a truncation argument, we can also assume that 
 $\sup_j\|u_j\|_{L^\infty(\Omega)} \leq\Lambda$, 
 $\|u\|_{L^\infty(\Omega)} \leq\Lambda$.
 For any $\rho\in (0, \, \dist(\Omega, \, \partial\Omega^\prime)$,
 let~$\Omega_\rho := \Omega\cup\Gamma_\rho = 
 \{x\in\R^d\colon \dist(x, \, \Omega)<\rho\}$.
 By applying~\eqref{flat-average} with $U = \Omega_\rho$, $H = \Omega$, 
 and using that $\F_{\Omega_\rho}\leq\F_{\Omega^\prime}$ (as a consequence
 of Lemma~\ref{lemma:relative_flat}), we obtain
 \[
  \begin{split}
   \F(\S^{\mathrm{bd}}_y(g_j) - \S^{\mathrm{bd}}_y(g))
   &\leq \F((\S_y(u_j) - \S_y(u))\mres\Omega) \\
   &\leq (1 + \rho^{-1})\F_{\Omega^\prime}(\S_y(u_j) - \S_y(u)) 
   + \M((\S_y(u_j) - \S_y(u))\mres\Gamma_\rho) \! .
  \end{split}
 \]
 We integrate with respect to~$y$ and
 apply~\eqref{Stop_weak_cont}, \eqref{S_mass} to deduce
 \[
  \begin{split}
   \int_{\R^m} \F(\S^{\mathrm{bd}}_y(g_j) - \S^{\mathrm{bd}}_y(g)) \,\d y
   & \leq C(1 + \rho^{-1})\norm{u_j - u}_{L^k(\Omega^\prime)}
   \left(\norm{\nabla u_j}^{k-1}_{L^{k}(\Omega^\prime)} 
   + \norm{\nabla u}^{k-1}_{L^{k}(\Omega^\prime)}\right) \\
   &\qquad\qquad + \norm{\nabla u_j}^{k}_{L^{k}(\Gamma_\rho)} 
   + \norm{\nabla u}^{k}_{L^{k}(\Gamma_\rho)} \\
   & \stackrel{\eqref{bd3}-\eqref{bd4}}{\leq} C(1 + \rho^{-1})\norm{g_j - g}_{L^k}
   \left(\norm{g_j}^{k-1}_{W^{1 - 1/k, k}} 
   + \norm{g}^{k-1}_{W^{1 - 1/k, k}}\right) \\
   &\qquad\qquad + \norm{\nabla u_j}^{k}_{L^{k}(\Gamma_\rho)} 
   + \norm{\nabla u}^{k}_{L^{k}(\Gamma_\rho)} \! .
  \end{split}
 \]
 By letting~$j\to+\infty$ first, and then~$\rho\to 0$,
 we deduce that $\S^{\mathrm{bd}}$ is sequentially continuous.
\end{proof}

\begin{proof}[Proof of Proposition~\ref{prop:cobordism}]
 We first prove~\eqref{cobordism1}.
 Let~$u\in (L^\infty\cap W^{1, k})(\Omega^\prime, \, \R^m)$
 be such that~$u=g$ on~$\partial\Omega$, in the sense of traces. 
 For~$j\in\{1, \, 2\}$, define
 \[
  \tilde{u}_j := \begin{cases}
                  u_j & \textrm{on } \overline{\Omega}, \\
                  u & \textrm{on } \Omega^\prime\setminus\overline{\Omega}.
                 \end{cases}
 \]
 Let~$\rho_\varepsilon$ be a standard mollifier supported in~$B^d_\varepsilon$,
 and let~$v_{j, \varepsilon} := \tilde{u}_j*\rho_\varepsilon$.  
 By taking a smaller~$\Omega^\prime$, we have that $v_{j, \varepsilon}$
 is well-defined and smooth on~$\Omega^\prime$, for any~$\varepsilon$ small enough.
 Setting~$\Omega_\varepsilon :=\{x\in\R^d\colon\dist(x, \, \Omega)<\varepsilon\}$,
 we have $v_{1,\varepsilon} = v_{2, \varepsilon}$ on~$\Omega^\prime\setminus\Omega_\varepsilon$.
 Therefore, by Proposition~\ref{prop:Stop_continuity},
 for a.e.~$y\in\R^m$ and any~$\varepsilon$ there exists a smooth 
 chain~$R_\varepsilon\in\M_{d-k+1}(\overline{\Omega}_\varepsilon; \, \pi_{k-1}(\NN))$
 such that 
 \begin{equation*}
  \S_{y}(v_{2, \varepsilon})\mres\overline{\Omega}_\varepsilon -
  \S_{y}(v_{1, \varepsilon})\mres\overline{\Omega}_\varepsilon 
  = \partial R_\varepsilon
 \end{equation*}
 and $\sup_\varepsilon\M(R_\varepsilon) <+\infty$.
 Up to extraction of a subsequence, we have
 $\F(R_\varepsilon - R)\to 0$ as~$\varepsilon\to 0$,
 for some~$R\in\M_{d-k+1}(\overline{\Omega}; \, \pi_{k-1}(\NN))$.
 Therefore, \eqref{cobordism1} follows if we show that
 $\S_{y}(v_{j, \varepsilon})\mres\overline{\Omega}_\varepsilon$
 $\F$-converges to~$\S_{y}(u_j)\mres\overline{\Omega}$, for~$j\in\{1, \, 2\}$ and a.e.~$y$.
 To this end, let us fix~$\varepsilon_0>0$ and take~$0 <\varepsilon<\varepsilon_0$.
 We apply~\eqref{flat-average} 
 and Lemma~\ref{lemma:relative_flat}, to obtain
 \[
  \begin{split}
   \F(\S_{y}(v_{j, \varepsilon})\mres\overline{\Omega}_\varepsilon - 
  \S_{y}(\tilde{u}_{2})\mres\overline{\Omega})
   \leq (1 + \varepsilon_0^{-1})\F_{\Omega^\prime}
     (\S_y(v_{j, \varepsilon}) - \S_y(v_2)) 
   + \M(\S_y(v_{j, \varepsilon}))
     \mres(\overline{\Omega}_{\varepsilon_0}\setminus\overline{\Omega}) )
  \end{split}
 \] 
 for~$j\in\{1, \, 2\}$. For a.e.~$y$, the first term in the right-hand side 
 converges to zero as~$\varepsilon\to 0$, because 
 $v_{j,\varepsilon}\to v_j$ in~$(L^\infty\cap W^{1,k-1})(\Omega^\prime, \, \R^m)$
 and because of~\eqref{S_flat}. As for the right-hand side, we have
 \[
  \begin{split}
   \sup_{0 <\varepsilon<\varepsilon_0} \int_{\R^d}\M(\S_y(v_{j, \varepsilon})) 
      \mres(\overline{\Omega}_{\varepsilon_0}\setminus\overline{\Omega}) ) \, \d y
   \leq C \sup_{0 <\varepsilon<\varepsilon_0}  
      \norm{\nabla v_{j, \varepsilon}}^k_{L^k(\Omega_{\varepsilon_0}\setminus\Omega)}
   \leq C
      \norm{\nabla v_j}^k_{L^k(\Gamma_{2\varepsilon_0})}
  \end{split}
 \]
 where $\Gamma_{2\varepsilon_0}:=\{x\in\R^d\colon\dist(x, \, \partial\Omega)<2\varepsilon_0\}$.
 (We have applied here Young's inequality for the convolution.) Since the right-hand side 
 converges to zero as~$\varepsilon_0\to 0$, we conclude the proof of~\eqref{cobordism1}. 
 
 We turn now to the proof of~\eqref{cobordism2}. In view of~\eqref{cobordism1},
 we can assume without loss of generality that~$u_1 = u_2$. Let~$0<\theta<1$ be fixed.
 Let~$y_1\in\R^m$ with $|y_1|\leq\theta\delta_0 = \theta\dist(\NN, \, \X)$.
 Let~$\xi\in C^{\infty}_{\mathrm{c}}(\R^m)$ be a cut-off function such that $0\leq\xi\leq 1$,
 $\xi = 0$ in a neighbourhood of~$\NN$ and~$\xi = 1$ in a neighbourhood of~$\X + y_1$, 
 and let~$\phi\colon\R^m\to\R^m$ be given by~$\phi(z) := z - y_0\,\xi(z)$
 for a fixed~$y_0\in\R^m$.
 There exists~$\delta > 0$ such that, for $|y_0|\leq\delta$, the map~$\phi$
 is a diffeomorphism. In fact, since the $C^1$-norm of~$\xi$
 can be bounded in terms of~$\theta$, $\delta_0$, we can
 choose~$\delta = \delta(\theta, \, \delta_0)$ uniformly with 
 respect to~$y_1\in B^m_{\theta\delta_0}$.
 Then, for~$|y_0|\leq\delta$ and a.e.~$y$ in a neighbourhood of~$y_1$, there holds
 \[
  \S_{y}(\phi(u_1)) = \S_{y + y_0}(u_1).
 \]
 This equality is readily checked in case~$u_1$ is smooth, and remains
 true in general by the continuity of~$\S$. Since~$\phi(u_1)$ has trace~$g$
 on~$\partial\Omega$ (because~$g$ is $\NN$-valued), we can apply~\eqref{cobordism1}
 and deduce that \eqref{cobordism2} holds, provided that $|y_1|\leq\theta\delta_0$
 and~$|y_1 - y_2|\leq\delta$. Since $B_{\theta\delta_0}^m$ 
 can be covered by finitely many balls of diameter~$\delta$, \eqref{cobordism2}
 remains true when $|y_1|\leq\theta\delta_0$, $|y_2|\leq\theta\delta_0$,
 and the proposition follows by letting~$\theta\nearrow 1$.
\end{proof}

\subsection{The topological singular set of $\NN$-valued maps}
\label{subsect:Sobolev-NN}

In this section, we study the special case of $\NN$-valued Sobolev maps.
We show that Pakzad and Rivi\`ere's construction~\cite{PakzadRiviere}
of a topological singular set
\[
 \S^{\mathrm{PR}}\colon W^{1,k-1}(\Omega, \, \NN) \to 
 \F_{d-k}(\overline{\Omega}, \, \pi_{k-1}(\NN))
\]
is essentially equivalent to~$\S$, that is, one can reconstruct the operator~$\S$ 
given~$\S^{\mathrm{PR}}$, and conversely.
As a consequence, we prove Theorem~\ref{th:PR},
which extends the results in~\cite{PakzadRiviere}.

We first recall the definition of~$\S^{\mathrm{PR}}$.
Let~$R^\infty_p(\Omega, \, \NN)$ (resp. $R^0_p(\Omega, \, \NN)$) 
be the class of maps $u\in W^{1, p}(\Omega, \, \NN)$ 
that are smooth (resp., continuous) on~$\overline{\Omega}$ 
away from the skeleton of a
polyhedral~$(d-\lfloor p \rfloor - 1)$-complex.
The set $R^\infty_p(\Omega, \, \NN)$ is dense in $W^{1,p}(\Omega, \, \NN)$
\cite[Theorem~2]{Bethuel-Density} 
(and so is, a fortiori, $R^0_p(\Omega, \, \NN)$).
Let~$u\in R^0_p(\Omega, \, \NN)$ and let~$Z$ 
be a polyhedral $(d-\lfloor p \rfloor - 1)$-complex such that~$u\in C^0(\Omega\setminus Z)$.
For each $(d-\lfloor p \rfloor - 1)$-cell~$H$ of~$Z$, we take a
$(\lfloor p \rfloor+1)$-disk~$B_H$ that intersect transversely~$H$
at a unique point~$x_H$, and do not intersect any other cell of~$Z$.
We orient~$H$ and~$B_H$ in such a way that $\T_{x_H}H \oplus \T_{x_H} B_H$
induces the standard orientation on~$\R^d$. We set
\begin{equation} \label{Stop_PR}
 \S^{\mathrm{PR}}(u) := \sum_{H} [u_*(\partial B_H)] \llbracket H\rrbracket,
\end{equation}
the sum being taken over all $(d-\lfloor p \rfloor - 1)$-cells~$H$ of~$Z$.
Pakzad and Rivi\`ere~\cite[Theorem~II]{PakzadRiviere} showed that,
in case~$\Omega = B^d$ and~$p\in[1, \, 2)\cup [d-1, \, d)$, 
the map~$\S^{\mathrm{PR}}$ can be extended continuously to~$W^{1, k-1}(B^d,\, \NN)$.

\begin{lemma} \label{lemma:Stop_PR}
 Let~$\delta_0 := \dist(\NN, \, \X)$. For any~$u\in R^0_{k-1}(\Omega, \, \NN)$
 and a.e.~$y\in\R^m$ with~$|y|<\delta_0$, there holds $\S_y(u) = \S^{\mathrm{PR}}(u)$.
\end{lemma}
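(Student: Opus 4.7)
The plan is to identify~$\S_y(u)$ cell-by-cell on the $(d-k)$-skeleton of the polyhedral complex~$Z$ off which~$u$ is continuous, using the characterisation~\eqref{S_intersection} together with the fact that, for~$|y|<\delta_0$, the map~$\RR_y\colon z\mapsto\RR(z-y)$ restricts to a well-defined smooth self-map of~$\NN$ that is homotopic to~$\Id_\NN$ via the obvious path~$(z,t)\mapsto\RR(z-ty)$. This will reduce the computation to the evaluation of homotopy classes~$[u_*(\partial B_H)]$ on the transverse disks~$B_H$, which by the definition~\eqref{Stop_PR} are precisely the coefficients of~$\S^{\mathrm{PR}}(u)$.

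First I would show that~$\spt\S_y(u)\subseteq Z$. Let~$B\csubset\Omega\setminus Z$ be an open ball. On~$B$, the map~$u$ is continuous and~$\NN$-valued; since~$|y|<\delta_0=\dist(\NN,\X)$, standard mollification of~$u$ combined with Lemma~\ref{lemma:projection_cont} shows that~$\RR\circ(u-y)$ is the $W^{1,k-1}$-limit of smooth~$\NN$-valued maps on~$B$, hence belongs to~$H^{1,k-1}(B,\NN)$. For any smooth~$k$-disk~$R\subseteq B$ with~$\spt(\partial R)\cap\spt\S_y(u)=\emptyset$, property~\eqref{S_intersection} gives
\[
 \I(\S_y(u),\,R) = [\RR\circ(u-y)_*(\partial R)].
\]
Since~$\RR\circ(u-y)$ has a continuous extension across~$R$, the right-hand side vanishes. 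The deformation-theorem argument used in the uniqueness part of the proof of Theorem~\ref{th:Stop} then forces~$\S_y(u)\mres B=0$. As~$B$ was arbitrary, $\spt\S_y(u)\subseteq Z$.

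Next I would compute the coefficient of~$\S_y(u)$ on each $(d-k)$-cell~$H$ of~$Z$ by testing against the transverse disk~$B_H$ from the definition of~$\S^{\mathrm{PR}}$. Since~$\partial B_H\subset\Omega\setminus Z$ is disjoint from~$\spt\S_y(u)$ by the previous step, \eqref{S_intersection} applies and gives
\[
 \I(\S_y(u),\,B_H) = [\RR\circ(u-y)_*(\partial B_H)] = [u_*(\partial B_H)],
\]
where the second equality uses the homotopy~$\RR_y\simeq\Id_\NN$ on~$\NN$ together with Lemma~\ref{lemma:homotopy}. On the other hand, since~$\spt\S_y(u)\subseteq Z$ and~$B_H$ meets~$Z$ transversely only at the single point~$x_H\in H$, the intersection index~$\I(\S_y(u),B_H)$ coincides with the coefficient of~$\S_y(u)$ at~$H$, with the orientation convention chosen to match that of~\eqref{Stop_PR}. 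Hence~$\S_y(u)$ and~$\S^{\mathrm{PR}}(u)$ share the same multiplicity on every $(d-k)$-cell of~$Z$; their difference, being a $(d-k)$-flat chain supported on the $(d-k-1)$-skeleton of~$Z$, must vanish by~\cite[Theorem~3.1]{White-Deformation}, and therefore~$\S_y(u)=\S^{\mathrm{PR}}(u)$.

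The main obstacle is the support inclusion~$\spt\S_y(u)\subseteq Z$: one has to combine the intersection characterisation~\eqref{S_intersection}, the fact that~$\RR\circ(u-y)$ actually lies in~$H^{1,k-1}_{\mathrm{loc}}(\Omega\setminus Z,\NN)$ (which rests on mollifying~$u$ in a neighbourhood where it is continuous and applying Lemma~\ref{lemma:projection_cont}), and a deformation-theoretic argument showing that a flat chain whose intersection with generic small $k$-disks in a ball vanishes must itself vanish there. Once the support has been pinned down, the multiplicity computation is a direct consequence of~\eqref{S_intersection} and the near-identity nature of~$\RR_y$ on~$\NN$ for~$|y|<\delta_0$.
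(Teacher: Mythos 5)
Your proof has the same overall skeleton as the paper's (show $\spt\S_y(u)\subseteq Z$, identify the multiplicity on each $(d-k)$-cell by testing with the transverse disks $B_H$, kill the difference on the $(d-k-1)$-skeleton via~\cite[Theorem~3.1]{White-Deformation}), but two steps are genuinely problematic. First, a circularity: you apply property~\eqref{S_intersection} to the Sobolev map $u$ itself (and to $\RR\circ(u-y)$), i.e.\ you use the full statement of Theorem~\ref{th:Stop}. Within the paper this is not available at this stage: Lemma~\ref{lemma:Stop_PR} is itself an ingredient of the proof of Theorem~\ref{th:Stop}, because~\eqref{S_intersection} for general maps in~$X$ is obtained through Lemmas~\ref{lemma:Stop-delta} and~\ref{lemma:Stop-delta-general}, whose proofs rest on Lemma~\ref{lemma:Stop_PR}; Remark~\ref{remark:PR_Th} records precisely that one may only use~\eqref{S_intersection} for \emph{smooth} maps together with the continuity of~$\S$. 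The paper sidesteps this by mollifying: for $u_\varepsilon := u*\rho_\varepsilon$ (after extending $u$ to a slightly larger domain), $\S_y(u_\varepsilon)$ is by construction supported in $(u_\varepsilon - y)^{-1}(\X)$, which for $|y|\leq\delta<\delta_0$ avoids the complement of any small neighbourhood of~$Z$, since there $u_\varepsilon$ is uniformly close to~$\NN$; letting $\varepsilon\to 0$ with~\eqref{S_flat} and the fact that flat convergence preserves supports gives $\spt\S_y(u)\subseteq Z$ with no intersection-index or deformation argument, and the multiplicity computation then applies~\eqref{S_intersection} only to the smooth maps $u_\varepsilon$, passing to the limit via the stability of~$\I$ under flat convergence (Lemma~\ref{lemma:intersection_product}).

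Second, even granting the support inclusion and the identity $\I(\S_y(u), \, \llbracket B_H\rrbracket) = [u_*(\partial B_H)]$, one intersection index per cell does not pin down the chain on that cell: a flat $(d-k)$-chain supported in the open cell~$H$ need not be a constant multiple of~$\llbracket H\rrbracket$, so your claim that ``$\I(\S_y(u),B_H)$ coincides with the coefficient of $\S_y(u)$ at~$H$'' presupposes constancy of the multiplicity. The paper obtains this by noting that $\S_y(u)$ is a relative cycle in~$\Omega$ (it is the flat limit of the relative cycles $\S_y(u_\varepsilon)$, cf.\ Lemma~\ref{lemma:Stop_cycle}) and then invoking the constancy theorem to get $\S_y(u)\mres H = \alpha(H)\llbracket H\rrbracket$; only after that does the single test disk~$B_H$ determine $\alpha(H)$, and only then does White's theorem dispose of a possible remainder on the lower-dimensional skeleton. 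This constancy step is missing from your argument and cannot be skipped.
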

\begin{proof} 
 Choose a number $0 < \delta < \delta_0$,
 and pick a function $u\in R^0_{k-1}(\Omega, \, \NN)$.
 By reflection (see e.g. \cite[Lemma~8.1]{ABO2}), we can extend~$u$ to a new map
 defined on a slightly larger domain~$\Omega^\prime\supset\!\supset\Omega$
 that retracts onto~$\overline{\Omega}$, in such a way
 that~$u\in W^{1, k-1}(\Omega^\prime, \, \NN)$.
 Let~$\rho_\varepsilon$ be a standard mollifier supported in~$B_\varepsilon^d$,
 and let~$u_\varepsilon := u*\rho_\varepsilon$. For any~$0<\varepsilon<\dist(\Omega,
 \, \partial\Omega^\prime)$, $u_\varepsilon$ is a well-defined map 
 in~$C^\infty(\overline{\Omega}, \, \R^m)$.
 Let~$Z$ be a polyhedral $(d - k)$-complex such that
 $u\in C^0(\Omega\setminus Z)$, and for any~$\eta >0$,
 let~$V_\eta$ be the closed~$\eta$-neighbourhood of~$Z$.
 Since~$u$ is $\NN$-valued and uniformly continuous on~$\overline{\Omega}\setminus V_\eta$, for
 $\varepsilon$ small enough and any~$x\in\overline{\Omega}\setminus V_\eta$ 
 we have~$\dist(u_\varepsilon(x), \, \NN) < \dist(\NN, \, \X) - \delta$.
 Thus, $\S_y(u_\varepsilon)\mres(\overline{\Omega}\setminus V_\eta) = 0$
 for any~$y$ such that~$|y|\leq \delta$. 
 Taking the limit as~$\varepsilon\to 0$ with the help of~\eqref{S_flat},
 and using that the flat-convergence preserves the support, we conclude that
 \begin{equation*} \label{PR-spt}
  \spt(\S_y(u)) \subseteq \bigcap_{\eta > 0} V_\eta = Z \qquad 
  \textrm{for any } y \textrm{ with } |y|\leq\delta.
 \end{equation*}
 Moreover, $\S_y(u)$ is a cycle relative to~$\Omega$, being the flat limit of
 the relative cycles~$\S_y(u_\varepsilon)$. Therefore, the constancy 
 theorem~\cite[Theorem~7.1]{DePauwHardt2} implies that, for any open~$(d-k)$-cell~$H$ of~$Z$,
 there exists $\alpha(H)\in\pi_{k-1}(\NN)$ such that 
 $\S_y(u)\mres H = \alpha(H)\llbracket H \rrbracket$. In fact, we also have
 \[
  \S_y(u) = \sum_H \alpha(H)\llbracket H \rrbracket,
 \]
 because no non-trivial $(d-k)$-chain can be supported on the $(d-k-1)$-skeleton of~$Z$
 \cite[Theorem~3.1]{White-Deformation}. Finally, let~$B_H$ be a closed $k$-disk
 that intersects transversely~$H$ at a single point, and does not intersect any other cell of~$Z$.
 Arguing as above, we see that~$\spt(\S_y(u_\varepsilon))\cap\partial B_H=\emptyset$ 
 for any~$y$ such that~$|y|\leq\delta$ and for
 $\varepsilon$ small enough. Therefore, using the stability of~$\I$ with respect to flat
 convergence (Lemma~\ref{lemma:intersection_product}) and \eqref{S_intersection},
 we conclude that
 \[
  \alpha(H) = \I(\S_y(u), \, \llbracket B_H\rrbracket)
  = \I(\S_y(u_\varepsilon), \, \llbracket B_H\rrbracket) 
  = [\RR\circ(u - y)_*(\partial B_H)]. 
 \]
 Now, when $|y|\leq\delta<\dist(\NN, \, \X)$,
 the map~$z\in\NN\mapsto\RR(z - y)$ 
 is homotopic to the identity on~$\NN$; a homotopy is given by 
 $(t, \, z)\in[0, \, 1]\times\NN\mapsto\RR(z - ty)$. Therefore, 
 we have $[\RR\circ(u - y)_*(\partial B_H)] = [u_*(\partial B_H)]$ and hence
 $\S_y(u) = \S^{\mathrm{PR}}(u)$ for a.e.-$y$ with~$|y|\leq\delta$. 
 By letting~$\delta\nearrow\delta_0$, the lemma follows.
\end{proof}

\begin{remark} \label{remark:PR_Th}
 Note that, in the proof of Lemma~\ref{lemma:Stop_PR}, we only need to apply 
 Property~\eqref{S_intersection} to smooth maps, so Lemma~\ref{lemma:Stop_PR}
 only relies on the results in Section~\ref{subsect:smooth} and the continuity of~$\S$.
\end{remark}

\begin{proof}[Proof of Lemma~\ref{lemma:Stop-delta}]
 If~$u\colon\Omega^\prime\to\R^m$ is smooth then, for
 a.e.~$y\in\R^m$, there holds $\RR\circ(u - y)\in R^{1,k-1}(\Omega, \, \NN)$. Thus, 
 Lemma~\ref{lemma:Stop_PR} (see also Remark~\ref{remark:PR_Th})
 and the very definition of~$\S_y(u)$ imply 
 \[
  \S_{y^\prime}(\RR\circ(u - y)) = \S^{\mathrm{PR}}(\RR\circ(u - y)) = \S_y(u) 
 \]
 for a.e.~$y^\prime$ with~$|y^\prime|<\delta_0$.
\end{proof}

\begin{proof}[Proof of Proposition~\ref{prop:N-valued}]
 In case~$u\in W^{1, 1-k}(\Omega, \, \NN)$, the statement follows immediately 
 from Lemma~\ref{lemma:Stop_PR}, combined with a density argument.
 For~$g\in X^{\mathrm{bd}}$, the statement follows by taking the boundary of both sides
 of~\eqref{cobordism2}, and using Proposition~\ref{prop:boundary}. Finally, an arbitrary map
 $u\in W^{1, k}(\Omega, \, \NN)$ can be approximated (in the~$W^{1, k}$-norm)
 by maps~$\tilde{u}\colon\Omega\to\NN$ that are smooth away from the skeleton of a
 smooth complex of dimension~$d - k - 1$. By Lemma~\ref{lemma:Stop_PR}, for a.e.~$y$ 
 with $|y|<\delta_0$ we have $\S_{y}(\tilde{u})=\S^{\mathrm{PR}}(\tilde{u})$, 
 and the latter must be zero because no non-trivial, smooth $(d-k)$-chain can be supported 
 on a~$(d-k-1)$-dimensional set. The proposition follows by a density argument.
\end{proof}

We conclude this section by giving the proof of Theorem~\ref{th:PR}.

\begin{proof}[Proof of Theorem~\ref{th:PR}]
 For any~$u\in R^{1,p}(B^d, \, \NN)$, $\S^{\mathrm{PR}}(u)$ is defined by~\eqref{Stop_PR}, 
 as in~\cite{PakzadRiviere}. 
 For any two maps~$u_0, \, u_1\in R^{1,p}(B^d, \, \NN)$,
 Lemma~\ref{lemma:Stop_PR} and~\eqref{S_flat}
 (with the choice~$k = \lfloor p\rfloor + 1$) imply that
 \[
  \F\left(\S^{\mathrm{PR}}(u_1) - \S^{\mathrm{PR}}(u_0)\right) 
  \leq C \int_{B^d} \abs{u_1 - u_0} \left(\abs{\nabla u_0}^{\lfloor p \rfloor} 
  + \abs{\nabla u_1}^{\lfloor p \rfloor} + 1\right)
 \]
 for some constant~$C = C(\NN, \, \X, \, p)$. Then, by applying Lebesgue dominated theorem
 to the right-hand side of this inequality,
 we deduce that~$\S^{\mathrm{PR}}$ maps Cauchy sequences in~$R^{1,p}(B^d, \, \NN)$ 
 into Cauchy sequences 
 in $\F_{d-\lfloor p\rfloor-1}(\overline{B}^d; \, \pi_{\lfloor p\rfloor}(\NN))$.
 Thus, $\S^{\mathrm{PR}}$ admits a continuous extension to~$W^{1,p}(B^d, \, \NN)$.
 Now, the theorem follows by the same arguments of~\cite[Theorem~II]{PakzadRiviere}.
\end{proof}

\section{Applications to $\NN$-valued BV spaces}
\label{sect:BV}

\subsection{Density of smooth, $\NN$-valued maps in BV}
\label{subsect:BV-density}

In this section, we consider the space~$\BV(\Omega, \, \R^m)$, consisting of
functions~$u\in L^1(\Omega, \, \R^m)$ whose distributional derivative~$\D u$ is a
finite Radon measure, endowed with the norm
$\|u\|_{\BV(\Omega)} := \|u\|_{L^1(\Omega)} + |\D u|(\Omega)$.
We also consider the semi-norm $|u|_{\BV(\Omega)} := |\D u|(\Omega)$.
The distributional derivative of a $\BV$-function has the following representation:
\[
 \D u = \nabla u \, \L^d + \D^{\mathrm{c}} u + \D^{\mathrm{j}} u,
\]
where~$\nabla u$ is called the approximate gradient of~$u$, $\D^c u$ and~$\D^j u$ are,
respectively, the Cantor and the jump part.
The latter is supported on a~$(d-1)$-rectifiable set $\J_u$, called the jump set,
and we have
\[
 \D^{\mathrm{j}}u = (u^+ - u^-)\otimes\nnu_{u} \H^{d-1} \mres \J_u
\]
where~$\nnu_u$ is the approximate unit normal to~$\J_u$ and~$u^+$, $u^-$ are the approximate
traces of~$u$ from either side of~$\J_u$. We define~$\SBV(\Omega, \R^m)$ as the set
of all functions~$u\in\BV(\Omega, \, \R^m)$ such that~$\D^c u = 0$.
We refer the reader, e.g., to~\cite{AmbrosioFuscoPallara} for more details and notation.
We define $\BV(\Omega, \, \NN)$ (resp., $\SBV(\Omega, \, \NN)$)
as the set of maps~$u\in\BV(\Omega, \, \R^m)$ (resp., $u\in\SBV(\Omega, \, \R^m)$)
such that~$u(x)\in\NN$ for a.e.~$x\in\Omega$.
We say that a sequence~$u_j$ of~$\BV$-functions converges weakly to~$u$ if and only if~$u_j\to u$
strongly in~$L^1$ and~$\D u_j \rightharpoonup^* \D u$ weakly$^*$ as elements of the
dual~$C_0(\Omega, \, \R^m)^\prime$. 

\begin{proof}[Proof of Theorem~\ref{th:BV_density}]
 Let~$u_j\in C^\infty(B^d, \, \R^m)$ be a
 sequence of smooth maps that converges to~$u$ weakly in~$\BV$ and a.e.,
 with $\|\nabla u_j\|_{L^1(B^d)} \leq C |\D u|(B^d)$
 (see e.g.~\cite[Theorem~3.5]{AmbrosioFuscoPallara}). Since~$\NN$ is compact,
 by a truncation argument we can make sure that $\|u\|_{L^\infty(B^d)}\leq\Lambda$, 
 for some constant~$\Lambda$ that only depends on the embedding of~$\NN$ in~$\R^m$.
 Since~$\NN$ is connected and~$\pi_1(\NN)$ is abelian, we can apply Theorem~\ref{th:Stop}
 to~$u_j$ with~$k=2$. In particular, by~\eqref{S_boundary}, for any~$j\in\N$  
 and a.e.~$y\in B^m_{\delta_0}$ (where $\delta_0 := \dist(\NN, \, \X)$)
 there exists a $(d-1)$-chain~$R_y^j$
 such that $(\partial R_y^j - \S_y(u_j))\mres B^d = 0$ and
 \[
  \int_{B^m_{\delta_0}} \M(R^j_y) \, \d y \leq 
  C\norm{\nabla u_j}_{L^1(B^d)} \leq C|\D u|(B^d) . 
 \]
 Moreover, by Lemma~\ref{lemma:projection_cont} we have
 \[
  \int_{B^m_{\delta_0}}\left(\int_{B^d} 
  \abs{\nabla(\RR\circ(u_j - y))(x)} \,\d x\right)\,\d y 
  \leq C\norm{\nabla u_j}_{L^1(B^d)} \leq C|\D u|(B^d) .
 \]
 By an average argument we deduce that, 
 for each~$j\in\N$ and~$\delta\in(0, \, \delta_0)$,
 there exists~$y(j)\in B^m_{\delta}$ such that
 \begin{equation*} \label{BV_control}
  \norm{\nabla(\RR\circ(u_j - y(j)))}_{L^1(B^d)} \leq C |\D u|(\Omega), \qquad 
  \M(R^j_{y(j)}) \leq C |\D u|(B^d),
 \end{equation*}
 for some constant~$C$ that depends on~$\delta$.
 By choosing~$\delta$ small enough, 
 we can make sure that the map $\RR_y\colon z\in\NN \mapsto\RR(z - y)$ has a smooth
 inverse~$\RR_y^{-1}\colon\NN\to\NN$ for any~$y\in B^m_\delta$.
 
 We set~$w_j := (\RR_{y(j)}^{-1}\circ\RR)(u_j - y(j))$. Then, $w_j\in R^{1,1}(B^d, \, \NN)$
 and the $L^1$-norm of~$\nabla w_j$ is bounded by the total variation of~$\D u$.
 By applying the ``removal of the singularity'' 
 technique in~\cite[Proposition~5.1]{PakzadRiviere}
 we find a map~$v_j\in C^\infty(B^d, \, \NN)$ such that
 \begin{gather} 
  \norm{v_j - w_j}_{L^1(B^d)}\leq j^{-1}, \label{w-v1} \\
  \norm{\nabla v_j}_{L^1(B^d)}\leq \norm{\nabla w_j}_{L^1(B^d)} 
       + C\M(R^j_{y(j)}) + Cj^{-1} \leq C \abs{\D u}(B^d). \label{v-w2}
 \end{gather}
 Now, $(w_j)_{j\in\N}$ is bounded in the~$\BV$-norm and hence, modulo extraction of a
 subsequence, $w_j$ converges weakly in $\BV$ and a.e. to some limit~$w\in\BV(B^d, \, \R^m)$.
 On the other hand, it can be easily checked that, up to subsequences, $v_j$ converges
 to~$u$ a.e. out of the $\H^d$-negligible set $\cup_j \spt R^j_{y(j)}$,
 and hence by~\eqref{w-v1} we have~$w=u$.
\end{proof}

\subsection{Lifting results in BV}
\label{subsect:BV-lifting}

In this section, we consider the lifting problem in~$\BV$. Let~$\pi\colon\EE\to\NN$
be the universal covering of~$\NN$. We endow~$\EE$ with the pull back metric~$\pi^*(h_{\NN})$,
$h_{\NN}$ being the metric of~$\NN$, so that~$\pi$ is a local isometry.
We also identify~$\EE$ with an isometrically embedded submanifold of
some Euclidean space~$\R^\ell$, and we define~$\BV(\Omega, \, \EE)$ as the set
of functions~$u\in\BV(\Omega, \, \R^{\ell})$ such that~$u(x)\in\EE$ for a.e.~$x\in\Omega$.
We say that~$v\in\BV(\Omega, \, \EE)$ is a \emph{lifting}
for $u\in\BV(\Omega,\, \NN)$ if $u = \pi\circ v$ a.e. on~$\Omega$.
When the domain is a ball, the existence of a lifting in~$\BV$ could be deduced by 
a density argument, based on Theorem~\ref{th:BV_density},
but we give below a different proof which works on more general domains.

\begin{proof}[Proof of Theorem~\ref{th:BV_lifting}]
 We choose a norm~$|\cdot|$ on~$\pi_1(\NN)$ that, in addition to~\eqref{group_norm}, satisfies
 \begin{equation} \label{group_norm_length}
  \inf \left\{\int_{\SS^1} \abs{\gamma^\prime(s)}\d s\colon 
  \gamma\in g\cap W^{1,1}(\SS^1, \, \NN)\right\} \leq C \abs{g}
 \end{equation}
 for any~$g\in\pi_1(\NN)$ and some $g$-independent constant~$C$. Such a norm exists. 
 Indeed, the left-hand side itself
 of~\eqref{group_norm_length} defines a norm on~$\pi_1(\NN)$ that satisfies~\eqref{group_norm}
 up to a multiplicative factor, as any loop whose length is less than the injectivity radius
 of~$\NN$ is contained in a contractible geodesic ball.
 
 We also need to fix some notation. 
 Given a smooth chain~$R\in\M_{d-1}(\R^d; \, \pi_1(\NN))$, we can always
 assign an orientation to each~$(d-1)$-cell of~$R$. We can then write
 $R = \sum_i g_i \llbracket H_i\rrbracket$, where the~$H_i$ are oriented, smooth 
 $(d-1)$-polyhedra with pairwise disjoint interiores and $g_i\in\pi_1(\NN)$.
 We denote the local multiplicity of~$R$ at a point~$x\in H_i\setminus\partial H_i$ 
 by~$\mathfrak{g}[R](x) := g_i$. Note that~$\mathfrak{g}[R]$ depends on the choice of 
 the orientation on~$H$, and $\mathfrak{g}[R](x)$ changes into~$-\mathfrak{g}[R](x)$
 when the orientation of~$H$ is flipped.
 
 \setcounter{step}{0}
 \begin{step}[Construction of an approximating sequence]
  Let~$\Omega^\prime$ be an open cube (i.e., $\Omega^\prime=(-L, \, L)^d$
  for some $L>0$) that contains~$\overline{\Omega}$.
  Let $u_\Omega := \H^d(\Omega)^{-1}\int_\Omega u\in\R^m$ be the average of~$u$ over~$\Omega$.
  Thanks to \cite[Proposition~3.21]{AmbrosioFuscoPallara} and the
  BV-Poincar\'e inequality \cite[Theorem~3.44]{AmbrosioFuscoPallara},
  we can extend~$u - u_\Omega$
  to a map~$u_*\in(L^\infty\cap\BV)(\Omega^\prime, \, \R^m)$ 
  that satisfies $|u_*|_{\BV(\Omega^\prime)}\leq C|u|_{\BV(\Omega)}$
  for some constant~$C = C(\Omega)$.   
  Thus, redefining $u := u_* + u_\Omega$, we have constructed an extension of the
  given map~$u$ that belong to~$(L^\infty\cap\BV)(\Omega^\prime, \, \R^m)$ 
  and satisfies $|u|_{\BV(\Omega^\prime)}\leq C|u|_{\BV(\Omega)}$.
  
  Take a sequence of smooth functions $u_j\in C^\infty(\Omega^\prime, \, \R^m)$
  that converges to~$u$ $\BV$-weakly and a.e., and 
  is uniformly bounded in~$L^\infty$. 
  By applying Theorem~\ref{th:Stop} to~$u_j$, with the choice~$k=2$,
  and using an average argument, 
  for any~$j\in\N$ we find~$y(j)\in\R^m$ and a smooth $(d-1)$-chain
  $R_j := R^j_{y(j)}\in\M_{d-1}(\overline{\Omega^\prime};\, \pi_1(\NN))$ 
  such that the following properties are satisfied.
  We set $S_j := \S_{y(j)}(u_j)$, $Z_j := (u_j - y(j))^{-1}(\X^{m-3})$
  and~$w_j := (\RR_{y(j)}^{-1}\circ\RR)(u_j - y(j))$. Then 
  $Z_j$ is a union of submanifolds of dimension $\leq d-3$ and there holds
  \begin{gather}
   w_j\in W^{1,1}(\Omega^\prime, \, \NN) \cap 
      C^\infty(\Omega^\prime\setminus(\spt S_j\cup Z_j), \, \NN) \label{w_j} \\
   w_j \to u \quad \textrm{a.e. on } \Omega \label{ae_w_j} \\
   \norm{\nabla w_j}_{L^1(\Omega^\prime)} + \M(R_j)
      \leq C|u|_{\BV(\Omega)} \label{BV_norm} \\
  (\partial R_j - S_j)\mres\Omega^\prime = 0. \label{boundary_R}
  \end{gather}
 \end{step}
 
 \begin{step}[Construction of a lifting for~$w_j$]
  For each~$j\in\N$, we will construct a lifting~$v_j$ of~$w_j$ such that
  $v_j\in C^\infty(\Omega^\prime\setminus(\spt R_j\cup Z_j), \, \EE)$ and
  \begin{equation} \label{jump_lifting}
   v_j^+(x) = (-1)^{d}\,\mathfrak{g}[R_j](x) \cdot v^-_j(x) 
   \qquad \textrm{for } \H^{d-1} \textrm{-a.e. } x\in \spt R_j. 
  \end{equation}
  To this end, we adapt a well-known topological construction 
  (see e.g. \cite[Proposition~1.33]{Hatcher}). We choose base
  points~$x_0\in\Omega^\prime\setminus\cup_j(\spt R_j \cup Z_j)$, $n_j := w_j(x_0)$
  and~$e_j\in\pi^{-1}(n_0)$. For any~$x\in\Omega^\prime\setminus(\spt R_j\cup Z_j)$,
  we take a smooth path~$\gamma\colon [0, \, 1]\to\Omega^\prime\setminus(\spt S_j\cup Z_j)$
  from~$x_0$ to~$x$. (Such a path exists, by transversality reasons.)
  We suppose that~$\gamma$ crosses transversely each cell of~$R_j$, 
  which is generically the case, by Thom's transversality theorem. In particular,
  there exists finitely many~$t_i\in (0, \, 1)$ such that~$\gamma(t_i)\in\spt R_j$;
  moreover, each~$\gamma(t_i)$ lies in the interior of a~$(d-1)$-cell.
  We define~$g_i\in\pi_{1}(\NN)$ by
  \begin{equation} \label{multiplicities}
   g_i := \begin{cases}
           (-1)^d\,\mathfrak{g}[R_j](\gamma(t_i))  & 
              \textrm{if } \gamma^\prime(t_i) \textrm{ agrees with the orientation of } R_j \\
           (-1)^{d-1}\,\mathfrak{g}[R_j](\gamma(t_i)) & \textrm{otherwise. } 
          \end{cases}
  \end{equation}
  We define a path~$\alpha\colon [0, \, 1]\to\EE$ in the following way:
  on the interval~$[0, \, t_1)$, $\alpha$ is the lifting of~$w_j\circ\gamma_{x|[0, t_1]}$
  starting from the point~$e_j$; on~$[t_1, \, t_2)$, $\alpha$ is the lifting 
  of~$w_j\circ\gamma_{x|[t_1, t_2]}$ starting from~$g_1\cdot \alpha(t_1^{-})$, and so on.
  Note that~$\alpha$ is uniquely defined by~$\gamma$. Then, we set~$v_j(x) := \alpha(1)\in\EE$.
  
  We need to check that~$v_j$ is well-defined. Let $\gamma$, $\eta$ be two paths from~$x_0$
  to~$x$ as above, and let~$\alpha$, $\beta$ be the corresponding paths in~$\EE$
  obtained via the previous construction. Let~$g_1, \, \ldots, \, g_p$, resp. 
  $h_1, \, \ldots , \, h_q$, be the elements of~$\pi_1(\NN)$ 
  associated with~$\gamma$, resp.~$\eta$, via~\eqref{multiplicities}. We denote 
  by~$\gamma*\bar{\eta}$ 
  the loop obtained by first travelling along~$\gamma$ 
  then along~$\eta$, 
  the opposite way from~$x$ to~$x_0$. Since
  $\Omega^\prime$ is a cube, hence a simply connected set, $\gamma*\bar{\eta}$ can be seen 
  as the boundary of a smooth chain~$T\in\M_2(\Omega^\prime; \, \Z)$. By definition 
  of the~$g_i$'s and $h_k$'s and by Lemma~\ref{lemma:intersection_product}, we have
  \begin{equation} \label{gh}
   - \sum_{i=1}^p g_i + \sum_{k=1}^q h_k = (-1)^{d-1}\I(R_j, \, \partial T)
   = \I (\partial R_j, \, T) \stackrel{\eqref{boundary_R}}{=} 
   \I (S_j, \, T) \stackrel{\eqref{S_intersection}}{=} [w_{j,*}(\partial T)].
  \end{equation}
  Let~$\sigma\colon[0, \, 1]\to\EE$, resp. $\tau\colon[0, \, 1]\to\EE$, be liftings for
  $w_j\circ\gamma$, resp. $w_j\circ\eta$, with~$\sigma(0)=\tau(0)=e_j$.
  Then, by construction of~$\alpha$, $\beta$, we have
  \[
   \alpha(1) = \sum_{i=1}^p g_i \cdot\sigma(1), \qquad \beta(1) = \sum_{k=1}^q h_k \cdot\tau(1)
  \]
  and $\sigma(1) = [w_{j,*}(\partial T)]\cdot\tau(1)$. From these identities and~\eqref{gh},
  it follows that $\alpha(1) = \beta(1)$, so $v_j(x)$ is well-defined.
  Now, arguing exactly as in
  \cite[Proposition~1.33]{Hatcher}, one sees that~$v_j$ is smooth
  on~$\Omega^\prime\setminus(\spt R_j\cup Z_j)$, 
  and~\eqref{jump_lifting} is satisfied by construction.
 \end{step}
 
 \begin{step}[Passage to the limit]
  Since~$\pi$ is a local isometry and~$w_j = \pi\circ v_j$, we have that
  $|\nabla v_j| = |\nabla w_j|$ on~$\Omega^\prime\setminus(\spt R_j\cup Z_j)$; moreover, 
  for any~$y\in\EE$ and~$g\in\pi_{1}(\NN)$ there holds
  \[
   |y - g \cdot y| \leq \dist_{\EE}(y, \, g\cdot y) = \inf_{\gamma\in g} 
   \int_{\S^1}\abs{\gamma^\prime(s)}\d s \stackrel{\eqref{group_norm_length}}{\leq} C\abs{g} \!,
  \]
  where~$\dist_{\EE}$ denotes the geodesic distance in~$\EE$. Together
  with~\eqref{jump_lifting}, this yields $|\D^{\mathrm{j}}v_j|(\Omega^\prime)\leq\M(R_j)$
  and hence, by \eqref{BV_norm},
  \begin{equation} \label{BV_seminorm_bdd}
   \abs{v_j}_{\BV(\Omega^\prime)} \leq C\abs{u}_{\BV(\Omega)}.
  \end{equation}
  Now, thanks to the BV-Poincar\'e-type inequality \cite[Lemma~6, Eq.~(16)]{Chiron-trace}, 
  for each~$j$ we find~$\xi_j\in\EE$ such that
  \begin{equation} \label{Poincare}
   \int_{\Omega^\prime} \dist_{\EE}(v_j(x), \, \xi_j) \, \d x \leq
   C\abs{v_j}_{\BV(\Omega^\prime)}.
  \end{equation}
  Since the group~$\pi_1(\NN)$ acts isometrically 
  on~$\EE$, and since~$\EE$ admits a cover of the form~$\{g\cdot U\}_{g\in\pi_1(\NN)}$
  where~$U\subseteq\EE$ is bounded, by multiplying each~$v_j$ by a suitable element 
  of~$\pi_1(\NN)$ we can assume without loss of generality that the~$\xi_j$'s are uniformly bounded.
  Then, \eqref{BV_seminorm_bdd} and~\eqref{Poincare}
  imply that~$({v_j}_{|\Omega})_{j\in\N}$ is bounded in~$\BV$. 
  We extract a subsequence that converges
  $\BV$-weakly and a.e. to a limit~$v\in\BV(\Omega, \, \EE)$; by~\eqref{ae_w_j}
  and~\eqref{BV_seminorm_bdd}, $v$ is a lifting of~$u$ with the desired properties.
 \end{step}
 
 \begin{step}[The case $u\in\SBV$]
  Let~$\iota$ be the canonical embedding~$\R^m\to\R^{m}\times\R^{2\ell}$.
  We first construct a smooth immersion~$\tilde{\pi}\colon \R^{\ell}\to\R^{m+2\ell}$ that 
  restricts to~$\iota\circ\pi$ on~$\EE\subseteq\R^{\ell}$.
  We consider a tubular neighbourhood~$U$ of~$\EE$ together with the nearest-point
  projection~$\tau\colon U\to\EE$, which is well-defined and smooth. 
  We take smooth cut-off functions~$\xi_0$, $\xi_1$ such that~$\xi_0 = 0$ and~$\xi_1 = 1$
  in a neighbourhood of~$\EE$, $\spt(\xi_1)\subseteq U$ and $\spt(1 - \xi_0)$ is contained 
  in the interior of~$\xi_1^{-1}(1)$ (so that, for any~$x\in\R^{\ell}$, either~$\xi_0$ 
  or~$\xi_1$ is equal to~$1$ in a neighbourhood of~$x$). We set
  \[
   \tilde{\pi} (x) := \left(\xi_1(x)\pi(\tau(x)), \, \xi_1(x)(x - \tau(x)), \, \xi_0(x) x\right)
   \qquad \textrm{for } x\in\R^\ell.
  \]
  Using the fact that~$\pi\colon\EE\to\NN$ is a local isometry, and in particular an immersion, 
  it can be checked that~$\tilde{\pi}$ has injective differential at any point; 
  moreover, $\tilde{\pi}_{|\EE} = \iota\circ\pi$. Take now a map~$u\in\SBV(\Omega, \, \NN)$
  and a lifting~$v\in\BV(\Omega, \, \EE)$.
  Then~$\tilde{\pi}\circ v = \iota\circ u\in\SBV(\Omega, \, \R^{m+2\ell})$
  and hence the chain rule for $\BV$-functions \cite[Theorem~3.96]{AmbrosioFuscoPallara} implies
  $\nabla\tilde{\pi}(\bar{v})\D^{\mathrm{c}}v = \D^{\mathrm{c}}(\iota\circ u) = 0$,
  where~$\bar{v}$ is the precise representative of~$v$
  (see, e.g., \cite[Corollary~3.80]{AmbrosioFuscoPallara}). Since~$\nabla\tilde{\pi}(y)$ 
  is injective for any~$y\in\R^{m+2\ell}$, we conclude that~$\D^{\mathrm{c}}v =0$,
  that is, $v\in\SBV(\Omega, \, \EE)$. \qedhere
 \end{step}
\end{proof}

\section*{Acknowledgements}
 The authors are grateful to the anonimous referees for their useful comments.
 G.C.'s  research  was supported 
 by the European Research Council under the
 European Union's Seventh Framework Programme (FP7/2007-2013)/ERC grant agreement n° 291053;
 by the Basque Government through the BERC 2018-2021 program; 
 by the Spanish Ministry of Science, Innovation and Universities:
 BCAM Severo Ochoa accreditation SEV-2017-0718; and by the
 Spanish Ministry of Economy and Competitiveness: MTM2017-82184-R.
 G. O. was partially supported by GNAMPA-INdAM.


\bibliographystyle{plain}
\bibliography{singular_set}

\end{document}